\def\ls{\lesssim}
\def\gs{\gtrsim}
\def\fz{\infty}
\renewcommand{\r}{\right}
\newcommand{\lf}{\left}
\def\red{\color{red}}
\def\ls{\lesssim}
\def\gs{\gtrsim}
\def\paz{{\partial}}
\def\supp{{\mathop\mathrm{\,supp\,}}}
\def\rr{{\mathbb R}}
\def\rn{{{\rr}^n}}
\def\zz{{\mathbb Z}}
\def\nn{{\mathbb N}}
\def\cc{{\mathbb C}}
\newcommand{\wz}{\widetilde}
\newcommand{\ca}{{\mathcal A}}
\newcommand{\cd}{{\mathcal D}}
\newcommand{\cm}{{\mathcal M}}
\newcommand{\cs}{{\mathcal S}}
\def\az{\alpha}
\def\lz{\lambda}
\def\dz{\delta}
\def\epz{\epsilon}
\def\bz{\beta}
\def\fai{\varphi}
\def\gz{{\gamma}}
\def\bgz{{\Gamma}}
\def\tz{\theta}
\def\sz{\sigma}
\def\wz{\widetilde}
\def\ls{\lesssim}
\def\gs{\gtrsim}
\def\ol{\overline}
\def\boz{\Omega}
\def\uc{{\varepsilon}}
\def\divz{{{\mathop\mathrm {div}}}}
\def\bbmo{{{\mathop\mathrm {BMO}}}}
\def\hs{\hspace{0.3cm}}
\def\dfrac{\displaystyle\frac}
\newtheorem{theorem}{Theorem}[section]
\newtheorem{lemma}[theorem]{Lemma}
\newtheorem{corollary}[theorem]{Corollary}
\theoremstyle{definition}
\newtheorem{remark}[theorem]{Remark}
\newtheorem{definition}[theorem]{Definition}
\def\supp{{\mathop\mathrm{\,supp\,}}}
\def\diam{{\mathop\mathrm{diam\,}}}
\def\dist{{\mathop\mathrm{\,dist\,}}}
\def\loc{{\mathop\mathrm{loc}}}
\def\dive{{\mathop\mathrm{div\,}}}
\def\lfz{\lfloor}
\def\rfz{\rfloor}
\numberwithin{equation}{section}
\begin{document}

\title{\Large\bf Heat Kernels and Hardy Spaces on Non-Tangentially Accessible Domains
with Applications to Global Regularity of Inhomogeneous Dirichlet Problems
\footnotetext{\hspace{-0.35cm} 2020 {\it Mathematics Subject
Classification}. {Primary 42B35; Secondary 35J05, 35J25, 47B06.}
\endgraf{\it Key words and phrases}. NTA domain, heat kernel, Hardy space, divergence form elliptic operator,
global gradient estimate, Dirichlet problem.
\endgraf This project is partially supported by the National Natural Science Foundation
of China (Grant Nos. 11871254, 12071431, 11971058 and 12071197),
the National Key Research and Development Program of China
(Grant No.\ 2020YFA0712900) and the Fundamental Research Funds for the
Central Universities (Grant No. lzujbky-2021-ey18).}}
\author{Sibei Yang and Dachun Yang\,\footnote{Corresponding
author, E-mail: \texttt{dcyang@bnu.edu.cn}/{\red January 10, 2022}/Final version.}}
\date{ }
\maketitle

\vspace{-0.8cm}

\begin{center}
\begin{minipage}{13.5cm}\small
{{\bf Abstract.} Let $n\ge2$ and $\Omega$ be a bounded non-tangentially accessible domain
(for short, NTA domain) of $\mathbb{R}^n$. Assume that $L_D$ is a second-order divergence form elliptic operator
having real-valued, bounded, measurable coefficients on $L^2(\Omega)$ with the Dirichlet boundary condition.
The main aim of this article is threefold. First, the authors prove that the heat kernels
$\{K_t^{L_D}\}_{t>0}$ generated by $L_D$ are H\"older continuous. Second, for any $p\in(0,1]$,
the authors introduce the `geometrical' Hardy space $H^p_r(\Omega)$ by restricting any element of
the Hardy space $H^p(\mathbb{R}^n)$ to $\Omega$, and show that, when $p\in(\frac{n}{n+\delta_0},1]$, $H^p_r(\Omega)=H^p(\Omega)=H^p_{L_D}(\Omega)$ with equivalent quasi-norms, where $H^p(\Omega)$
and $H^p_{L_D}(\Omega)$ respectively denote the Hardy space on $\Omega$ and the Hardy space associated
with $L_D$, and $\delta_0\in(0,1]$ is the critical index of the H\"older continuity for the kernels
$\{K_t^{L_D}\}_{t>0}$. Third, as applications, the authors obtain the global gradient estimates in both $L^p(\Omega)$,
with $p\in(1,p_0)$, and $H^p_z(\Omega)$, with $p\in(\frac{n}{n+1},1]$, for the inhomogeneous
Dirichlet problem of second-order divergence form elliptic equations on bounded NTA domains,
where $p_0\in(2,\infty)$ is a constant depending only on $n$, $\Omega$, and the coefficient matrix of $L_D$.
Here, the `geometrical' Hardy space $H^p_z(\Omega)$ is defined by restricting any element of
the Hardy space $H^p(\mathbb{R}^n)$ supported in $\overline{\Omega}$ to $\Omega$, where $\overline{\Omega}$
denotes the closure of $\Omega$ in $\mathbb{R}^n$. It is worth pointing out that the range $p\in(1,p_0)$
for the global gradient estimate in the scale of Lebesgue spaces $L^p(\Omega)$ is sharp and the above
results are established without any additional assumptions on both the coefficient matrix of $L_D$,
and the domain $\Omega$.}
\end{minipage}
\end{center}

\vspace{0.1cm}

\section{Introduction\label{s1}}

 The study of elliptic value problems on non-smooth domains of $\rn$
has a long history (see, for instance, \cite{d20,jk95,k94} and the references therein).
In recent years, the research of the global regularity for elliptic equations
with rough coefficients on non-smooth domains of $\rn$ has aroused great interest
(see, for instance, \cite{bw04,d20,dk10,dk12,dk18,dl21,g12,sh18,sh05a}). The global regularity
estimates of elliptic equations with rough coefficients on the non-smooth domain $\boz$ of $\rn$
in the scale of Lebesgue spaces $L^p(\boz)$, with $p\in(1,\fz)$, have been extensively studied
in the existing literatures (see, for instance, the recent survey article \cite{d20},
the monograph \cite{sh18}, and the references therein). However, there exist very few
literatures on global regularity  estimates of elliptic equations with rough coefficients
on the non-smooth domain $\boz$ of $\rn$ in the scale of Hardy spaces $H^p(\boz)$ with $p\in(0,1]$.

Let $n\ge2$ and $\Omega$ be a bounded non-tangentially accessible domain (for short,
NTA domain) of $\mathbb{R}^n$. Assume that $L_D$ is a second-order divergence
form elliptic operator having real-valued, bounded, measurable coefficients on $L^2(\Omega)$ with the
Dirichlet boundary condition. The main aim of this article is threefold. First, we prove that the heat
kernels $\{K_t^{L_D}\}_{t>0}$ generated by $L_D$ are H\"older continuous. Second, for any $p\in(0,1]$,
we introduce the `geometrical' Hardy space $H^p_r(\Omega)$ by restricting any element
of the Hardy space $H^p(\mathbb{R}^n)$ to $\Omega$, and show that, when $p\in(\frac{n}{n+\delta_0},1]$,
$H^p_r(\Omega)=H^p(\Omega)=H^p_{L_D}(\Omega)$ with equivalent quasi-norms, where $H^p(\Omega)$ and $H^p_{L_D}(\Omega)$
respectively denote the Hardy space on $\Omega$ and the Hardy space associated with $L_D$,
and $\delta_0\in(0,1]$ is the critical index of the H\"older continuity for the kernels $\{K_t^{L_D}\}_{t>0}$.
Third, as applications, for the inhomogeneous Dirichlet boundary value problem
\begin{equation}\label{eq1.1}
\begin{cases}
-\dive(A\nabla u)=f\ \ &\text{in}\ \ \boz,\\
u=0 \ \ &\text{on}\ \ \partial\boz,
\end{cases}
\end{equation}
where the matrix $A$ is real-valued, bounded, and measurable, and satisfies the
uniform ellipticity condition [see \eqref{eq1.3} below for the details],
and $\partial\boz$ denotes the boundary of $\boz$, we obtain the global gradient estimates
of the weak solution $u$ in both Lebesgue spaces $L^p(\Omega)$,
with $p\in(1,p_0)$, and Hardy spaces $H^p_z(\Omega)$, with $p\in(\frac{n}{n+1},1]$,
where $p_0\in(2,\infty)$ is a constant depending only on $n$, $\Omega$, and the coefficient
matrix $A$. Here, the `geometrical' Hardy space $H^p_z(\Omega)$ is defined by restricting
any element of the Hardy space $H^p(\mathbb{R}^n)$ supported in $\overline{\Omega}$ to $\Omega$,
where $\overline{\Omega}$ denotes the closure of $\Omega$ in $\mathbb{R}^n$. Meanwhile,
it is worth pointing out that the range $p\in(1,p_0)$ of $p$ for the global gradient estimate in
the scale of the Lebesgue space $L^p(\Omega)$ is sharp [see Remark \ref{r1.4}(i) below for the details].

Compared with the global regularity estimate of elliptic equations on the non-smooth
domain $\boz$ of $\rn$ in Lebesgue spaces $L^p(\boz)$ established in \cite{aq02,bw04,d96,dk10,sh05a},
we obtain the global regularity estimate for the Dirichlet problem \eqref{eq1.1} without
any additional assumptions on both the coefficient matrix $A$ and the domain $\boz$.
Recall that the global gradient estimate in $L^p(\boz)$ with any given $p\in(1,\fz)$ for
the Dirichlet problem \eqref{eq1.1}, with $f$ replaced by $\dive (\mathbf{f})$,
was established by Di Fazio \cite{d96}, under the assumptions that $A\in\mathrm{VMO}\,(\rn;\rr^{n^2})$
(see, for instance, \cite{s75}) and $\partial\boz\in C^{1,1}$, which was weakened to
$\partial\boz\in C^{1}$ by Auscher and Qafsaoui \cite{aq02}. Moreover, the global gradient
estimate in $L^p(\boz)$ with any given $p\in(1,\fz)$ for the problem \eqref{eq1.1},
with $f$ replaced by $\dive (\mathbf{f})$, was obtained by Byun and Wang \cite{bw04},
under the assumptions that $A$ satisfies the $(\dz,R)$-BMO condition (see, for instance,
\cite{bw04} or Definition \ref{d2.4} below for its definition)
for sufficiently small $\dz\in(0,\fz)$, and that $\boz$ is a bounded
Reifenberg flat domain of $\rn$ (see, for instance, \cite{r60,t97} or Remark \ref{r2.2}(i) below
for its definition). Furthermore, for the Dirichlet problem \eqref{eq1.1} with $f$ replaced by
$\dive (\mathbf{f})$, the global gradient estimate in $L^p(\boz)$ with any given
$p\in(1,\fz)$ was established by Dong and Kim \cite{dk10,dk12},
under the assumptions that $A$ has partial sufficiently small $\mathrm{BMO}$
coefficients and that $\boz\subset\rn$ is a bounded Lipschitz domain with small Lipschitz constant, or
a bounded Reifenberg flat domain. Meanwhile, for the problem \eqref{eq1.1} with $f$ replaced by
$\dive (\mathbf{f})$, the global gradient estimate in $L^p(\boz)$, with any given
$p\in(\frac32-\uc,3+\uc)$ when $n\ge3$, or $p\in(\frac43-\uc,4+\uc)$ when $n=2$, was obtained
by Shen \cite{sh05a}, under the assumptions that $A\in\mathrm{VMO}\,(\rn;\rr^{n^2})$ and
that $\boz\subset\rn$ is a bounded Lipschitz domain, where $\uc\in(0,\fz)$ is a constant
depending only on $n$ and $\boz$.

Moreover, NTA domains considered in this article were originally introduced by Jerison
and Kenig \cite{jk82} when studying the boundary behavior of harmonic functions.
We point out that NTA domains have a wide generality and contain Lipschitz domains,
$\mathrm{BMO}_1$ domains, Zygmund domains, quasi-spheres, and some Reifenberg flat domains
as special examples (see, for instance, \cite{jk82,kt97,t97}). Furthermore, NTA domains
are closely related to the theory of quasi-conformal mappings (see, for instance, \cite{jk82,j81}
and the references therein).

To describe the main results of this article, we first recall some necessary notions.
Let $\boz$ be a bounded NTA domain of $\rn$ as in Definition \ref{d2.1} below (see also \cite{jk82})
and $p\in(0,\fz)$. Recall that the \emph{Lebesgue space} $L^p(\Omega)$ is defined by setting
\begin{align*}
L^p(\Omega):=\lf\{f\ \text{is measurable on}\ \Omega: \
\|f\|_{L^p(\Omega)}:=\lf[\int_{\boz}
|f(x)|^p\,dx\r]^{\frac1p}<\fz\r\}.
\end{align*}
Moreover, for any given $m\in\nn$, let
\begin{equation}\label{eq1.2}
L^p(\boz;\rr^m):=\lf\{\mathbf{f}:=(f_1,\,\ldots,\,f_m):\ \text{for any}
\ i\in\{1,\,\ldots,\,m\},\ f_i\in L^p(\boz)\r\}
\end{equation}
and
$$\|\mathbf{f}\|_{L^p(\boz;\rr^m)}:=\sum_{i=1}^m\|f_i\|_{L^p(\boz)}.
$$
Denote by $W^{1,p}(\boz)$ the \emph{Sobolev space on $\boz$} equipped with the \emph{norm}
$$\|f\|_{W^{1,p}(\boz)}:=\|f\|_{L^p(\boz)}+\|\nabla f\|_{L^p(\boz;\rn)},$$
where $\nabla f$ is the \emph{distributional gradient} of $f$. Furthermore, $W^{1,p}_{0}(\boz)$
is defined to be the \emph{closure} of $C^{\fz}_{\mathrm{c}} (\boz)$ in $W^{1,p}(\boz)$, where
$C^{\fz}_{\mathrm{c}}(\boz)$ denotes the set of all \emph{infinitely differentiable functions on
$\boz$ with compact support contained in $\boz$}.

For any given $x\in\boz$, let $A(x):=\{a_{ij}(x)\}_{i,j=1}^n$ denote
an $n\times n$ matrix with real-valued, bounded, and measurable entries.
Then $A$ is said to satisfy the \emph{uniform ellipticity condition}
if there exists a positive constant $\mu_0\in(0,1]$ such that,
for any $x\in\boz$ and $\xi:=(\xi_1,\,\ldots,\,\xi_n)\in\rn$,
\begin{equation}\label{eq1.3}
\mu_0|\xi|^2\le\sum_{i,j=1}^na_{ij}(x)\xi_i\xi_j\le \mu_0^{-1}|\xi|^2.
\end{equation}
Denote by $L_D$ the \emph{maximal-accretive operator} (see, for instance, \cite[p.\,23, Definition 1.46]{o05}
for the definition) on $L^2 (\boz)$ with the largest domain $\cd(L_D)\subset W^{1,2}_{0}(\boz)$ such that,
for any $f\in \cd(L_D)$ and $g\in W^{1,2}_{0}(\boz)$,
\begin{equation*}
(L_Df,g)=\int_{\boz}A(x)\nabla f(x)\cdot\nabla g(x)\,dx,
\end{equation*}
where $(\cdot,\cdot)$ denotes the \emph{interior product} in $L^2(\boz)$.
In this sense, for any $f\in \cd(L_D)$, we write
\begin{equation}\label{eq1.4}
L_Df:=-\divz(A\nabla f).
\end{equation}

Let $\{K^{L_D}_{t}\}_{t>0}$ be the kernels of the semigroup $\{e^{-tL_D}\}_{t>0}$.
By \cite[Corollary 3.2.8]{d89} (see also \cite{at01a}), we find that there exist
positive constants $C$ and $c$ such that, for any $t\in(0,\fz)$ and $x,\,y\in\boz$,
\begin{equation}\label{eq1.5}
\lf|K^{L_D}_{t}(x,y)\r|\le\frac{C}{t^{n/2}}\exp\lf\{-\frac{|x-y|^2}{ct}\r\}.
\end{equation}
Furthermore, it is worth pointing out that the upper and the lower bound estimates,
and the H\"older continuity of the heat kernels play a key roles in the study of
the well-posedness of some parabolic partial differential equations, real-variable
characterizations of some function spaces, and some Sobolev-type inequalities
(see, for instance, \cite{d89,gs11,s10}).

Now, we state the main results of this article as follows; see Definitions \ref{d2.4} and
\ref{d2.2} below, respectively, for the definitions of both the $(\gz,R)$-$\mathrm{BMO}$ condition
and the $(\gz,\sz,R)$ quasi-convex domain.

\begin{theorem}\label{t1.1}
Let $n\ge2$, $\boz$ be a bounded {\rm NTA} domain of $\rn$, the real-valued, bounded, and
measurable matrix $A$ satisfy \eqref{eq1.3}, and $L_D$ be as in \eqref{eq1.4}. Denote by $\{K^{L_D}_t\}_{t>0}$
the heat kernels generated by $L_D$.
\begin{itemize}
\item[\rm(i)] Then there exists a constant $\dz_0\in(0,1]$, depending only on $n$, $A$,
and $\boz$, such that, for any given $\dz\in(0,\dz_0)$, there exist constants
$C,\,c\in(0,\fz)$ such that, for any $t\in(0,\fz)$ and $x,\,y_1,\,y_2\in\boz$ with $|y_1-y_2|\le\sqrt{t}/2$,
\begin{equation}\label{eq1.6}
\lf|K^{L_D}_{t}(x,y_1)-K^{L_D}_{t}(x,y_2)\r|\le\frac{C}{t^{n/2}}
\lf[\frac{|y_1-y_2|}{\sqrt{t}}\r]^{\dz}\exp\lf\{-\frac{|x-y_1|^2}{ct}\r\}.
\end{equation}
\vspace{-0.25cm}
\item[\rm(ii)] For any given $\dz_0\in(0,1]$, there exists a constant $\gz_0\in(0,\fz)$, depending
only on $\dz_0$, $n$, and $\boz$, such that, if $A$ satisfies the $(\gz,R)\mbox{-}\bbmo$ condition
and $\boz$ is a $(\gz,\sz,R)$ quasi-convex domain for some $\gz\in(0,\gz_0)$, $\sigma\in(0,1)$,
and $R\in(0,\fz)$, then, for any given $\dz\in(0,\dz_0)$, there exist constants $C,\,c\in(0,\fz)$
such that, for any $t\in(0,\fz)$ and $x,\,y_1,\,y_2\in\boz$ with $|y_1-y_2|\le\sqrt{t}/2$,
\eqref{eq1.6} holds true.
\end{itemize}
\end{theorem}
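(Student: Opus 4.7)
The plan is to derive \eqref{eq1.6} by combining interior parabolic De Giorgi--Nash--Moser regularity with a boundary H\"older estimate tailored to NTA domains. Fix $x\in\boz$ and $t\in(0,\fz)$; since $A$ is real valued, $K^{L_D}_s(x,y)=K^{L_D^*}_s(y,x)$, so $v(s,y):=K^{L_D}_s(x,y)$ is a weak solution of the parabolic equation $\partial_s v+L_{D,y}^*v=0$ on $(0,\fz)\times\boz$ with zero Dirichlet data on $\partial\boz$, and it obeys the Gaussian bound \eqref{eq1.5}. The estimate \eqref{eq1.6} then reduces to a spatial H\"older estimate for $v(t,\cdot)$ on the scale $\sqrt{t}$, twisted by a Gaussian factor in $|x-y_1|$.

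For part (i), I would split cases according to whether $y_1$ is far from or close to $\partial\boz$ on the scale $\sqrt{t}$. In the interior case $\dist(y_1,\partial\boz)\gs\sqrt{t}$, the parabolic De Giorgi--Nash--Moser theorem applied on a parabolic cylinder of spatial radius comparable to $\sqrt{t}$ around $(t,y_1)$ gives H\"older continuity in $y$ with some exponent $\dz_{\mathrm{int}}\in(0,1]$ depending only on $n$ and $\mu_0$; combined with the $L^\fz$-bound coming from \eqref{eq1.5} this handles the regime $|x-y_1|\ls\sqrt{t}$. In the boundary case, the heart of the matter is a scale-invariant boundary H\"older estimate for solutions of $\partial_s w+L_D^*w=0$ vanishing on $\partial\boz\cap B(y_1,\sqrt{t})$, with an exponent $\dz_{\mathrm{bdry}}\in(0,1]$ depending on $n$, $\mu_0$, and the NTA constants of $\boz$. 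This rests on the exterior corkscrew condition built into the NTA definition: at every boundary point $q$ and scale $r$ there is a ball of radius $\sim r$ in $\rn\setminus\overline{\boz}$ near $q$. Such geometry yields a uniform capacity-density condition on $\partial\boz$, which in turn drives a Wiener/Moser-type oscillation-decay estimate for caloric functions with zero boundary data. Setting $\dz_0:=\min\{\dz_{\mathrm{int}},\dz_{\mathrm{bdry}}\}$, the Gaussian decay factor $\exp(-|x-y_1|^2/(ct))$ is then propagated via the Chapman--Kolmogorov identity
\begin{equation*}
K^{L_D}_t(x,y)=\int_\boz K^{L_D}_{t/2}(x,z)\,K^{L_D}_{t/2}(z,y)\,dz,
\end{equation*}
applied to the difference in the $y$-variable by using the H\"older estimate just obtained on the inner kernel and the Gaussian upper bound \eqref{eq1.5} on the outer kernel; this completes part (i).

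For part (ii), the stronger hypotheses on $A$ and $\boz$ are calibrated so that the global $W^{1,p}(\boz)$-estimate for the Dirichlet problem \eqref{eq1.1} holds for $p$ up to a threshold $p_0=p_0(\gz)$ that tends to $\fz$ as $\gz\downarrow 0$. Via the Sobolev--Morrey embedding $W^{1,p}(\boz)\hookrightarrow C^{0,1-n/p}(\overline{\boz})$ for $p>n$, combined with parabolic energy estimates and parabolic rescaling, this upgrades the spatial H\"older exponent of $v(t,\cdot)$ to any prescribed $\dz<1$ provided $\gz$ is chosen small enough in terms of $\dz_0$. The main obstacle, and the technical heart of the argument, is the boundary H\"older estimate underlying part (i): one must quantify how the exterior corkscrew condition of an NTA domain yields a capacity density on $\partial\boz$ strong enough to force a scale-invariant boundary oscillation-decay estimate for the $L_D^*$-caloric kernel, with both the constants and the exponent independent of the boundary base point and of the scale $\sqrt{t}$.
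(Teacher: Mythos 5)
Your strategy for part (i) is a genuinely different route from the paper's: you propose interior parabolic De Giorgi--Nash--Moser regularity together with a scale-invariant boundary oscillation-decay estimate coming from the exterior corkscrew (measure/capacity density) condition, whereas the paper, for $n\ge3$, first proves H\"older continuity of the Green function (Lemmas \ref{l3.1} and \ref{l3.2}), transfers it to the resolvent kernels and then to $\{K^{L_D}_t\}_{t>0}$ via functional calculus and the inverse Laplace transform (Lemmas \ref{l3.3}--\ref{l3.5}), and, for $n=2$ as well as for part (ii), argues through global $W^{1,p}(\boz)$ gradient estimates for the Dirichlet problem (Lemmas \ref{l3.6} and \ref{l3.9}) plus the Sobolev--Morrey embedding. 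Your parabolic route would treat all $n\ge2$ uniformly and avoid the Green function altogether, which is attractive; your plan for part (ii) is essentially the paper's (small $(\gz,R)$-$\mathrm{BMO}$ plus quasi-convexity giving $W^{1,p}$ with $p>n$, then Morrey embedding).

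However, as written the proposal has genuine gaps. First, the scale-invariant boundary H\"older estimate for $L_D^\ast$-caloric functions vanishing on $\partial\boz$ is the entire content of (i) beyond classical interior theory, and you do not prove it: you yourself flag it as ``the main obstacle.'' The chain ``exterior corkscrew $\Rightarrow$ uniform thickness of $\boz^\complement$ $\Rightarrow$ boundary oscillation decay'' is correct and classical (condition (A) of Ladyzhenskaya--Ural'tseva type), but a complete argument must either cite such a parabolic boundary estimate with constants depending only on $n$, $\mu_0$, and the NTA constants, uniformly in the boundary point and in the scale $\sqrt{t}$, or reprove the growth lemma; asserting it is not enough. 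Second, the Chapman--Kolmogorov step does not recover the Gaussian factor as described: if the inner H\"older bound is only $t^{-n/2}(|y_1-y_2|/\sqrt{t})^{\dz}$, integrating $K_{t/2}(x,\cdot)$ against it loses all decay in $|x-y_1|$; you must instead apply the oscillation estimate on a parabolic cylinder centred at $(t,y_1)$ of radius $\sim\sqrt{t}$, whose supremum already obeys \eqref{eq1.5} with $|x-y_1|$, or interpolate the unweighted H\"older bound with the Gaussian upper bound, which costs an arbitrarily small loss in the exponent (harmless, since (i) only claims $\dz<\dz_0$); this interpolation is exactly the device the paper uses. Third, for (ii) your diagnosis is misplaced: once the global $W^{1,p}(\boz)$ estimate for $p>n$ is available, Morrey embedding yields the H\"older bound directly and no boundary oscillation lemma from (i) is needed; what you must actually supply there is the realization of $K^{L_D^\ast}_t(\cdot,x)$ as a solution of the Dirichlet problem with right-hand side $-\frac{d}{dt}K^{L_D^\ast}_t(\cdot,x)$, controlled in $L^{p_\ast}(\boz)$ via analyticity of the semigroup and \eqref{eq1.5} --- the step you compress into ``parabolic energy estimates and parabolic rescaling.''
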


\begin{remark}\label{r1.1}
We point out that, when $\boz$ is a bounded Lipschitz domain of $\rn$,
the conclusion of Theorem \ref{t1.1}(i) is well known (see, for instance, \cite{at01a}).
Moreover, when $\boz$ is a bounded semi-convex domain of $\rn$ (see, for instance,
\cite{mmmy10,mmy10} or Remark \ref{r2.2}(ii) below for the details), and $L_D:=-\Delta_D$ with
$\Delta_D$ being the Laplace operator with the Dirichlet boundary condition on $\boz$,
Theorem \ref{t1.1}(ii) was obtained in \cite[Lemma 2.7]{dhmmy13}. Recall that the bounded
semi-convex domain $\boz$ is a $(\gz,\sz,R)$ quasi-convex domain for any $\gz\in(0,1)$,
some $\sigma\in(0,1)$, and some $R\in(0,\fz)$ (see, for instance, \cite{yyy20}).
Thus, Theorem \ref{t1.1}(ii) essentially improves \cite[Lemma 2.7]{dhmmy13} by weakening the assumptions
on both the matrix $A$ and the domain $\boz$.
\end{remark}

When $n\ge3$, we prove Theorem \ref{t1.1}(i) by using an upper estimate for the Green function associated with
$L_D$ in terms of distance functions (see, for instance, \cite[Remark 4.9]{hk07}), the Harnack inequality
(see, for instance, \cite[Theorem 8.22]{gt01}), and the functional calculus associated with $L_D$.
Precisely, using the upper estimate for the Green function of $L_D$ in terms of distance functions, and
the Harnack inequality, and borrowing some ideas from the proof of Gr\"{u}ter and Widman \cite[Theorem (1.9)]{gw82},
we prove the H\"older continuity of the Green function associated with $L_D$. Moreover, applying the
H\"older continuity of the Green function, and the functional calculus associated with $L_D$, and borrowing some
ideas from the proofs of Duong et al. \cite[Lemmas 2.6 and 2.7]{dhmmy13}, we prove Theorem \ref{t1.1}(i) in the case of $n\ge3$.
When $n=2$, we show Theorem \ref{t1.1}(i) via establishing the global gradient estimate for the Dirichlet problem
\eqref{eq1.1} in $L^p(\boz)$ with some $p\in(2,\fz)$ (see Lemma \ref{l3.6} below), and using the Sobolev embedding
theorem. Furthermore, we prove Theorem \ref{t1.1}(ii) by establishing the global gradient estimate for the
Dirichlet problem \eqref{eq1.1} in $L^p(\boz)$ for sufficiently large $p\in(n,\fz)$ (see Lemma \ref{l3.9} below),
and applying the Sobolev embedding theorem.

Next, we recall the definitions of the Hardy space $H^p(\rn)$,
the `geometrical' Hardy spaces $H^p_z(\boz)$ and $H^p(\boz)$, the Hardy space
$H^p(\boz)$, and the Hardy space $H^p_{L_D}(\boz)$ associated with $L_D$.

Denote by $\cs(\rn)$ the \emph{space of all Schwartz functions} equipped
with the well-known topology determined by a countable family of
norms, and by $\cs'(\rn)$ its \emph{dual space} (namely, the space of all
\emph{tempered distributions}) equipped with the weak-$\ast$ topology.
Let $\cd(\boz)$ denote the \emph{space of all
infinitely differentiable functions with compact support in $\boz$}
equipped with the inductive topology, and $\cd'(\boz)$ its
\emph{topological dual} equipped with the weak-$\ast$
topology, which is called the \emph{space of distributions on $\boz$}.

In what follows, for any $x\in\rn$ and $r\in(0,\fz)$, we always let $B(x,r):=\{y\in\rn:\ |y-x|<r\}$.

\begin{definition}\label{d1.1}
Let $p\in(0,1]$ and $\boz$ be a domain of $\rn$.
\begin{itemize}
\item[(i)] The \emph{Hardy space $H^p(\rn)$} is defined to be the set of
all $f\in\cs'(\rn)$ such that $f^+\in L^p(\rn)$
equipped with the \emph{quasi-norm} $\|f\|_{H^p(\rn)}:=\|f^+\|_{L^p(\rn)}$,
where the \emph{radial maximal function} $f^+$ of $f$ is defined by setting,
for any $x\in\rn$,
$$f^+(x):=\sup_{t\in(0,\fz)}\lf|e^{-t\Delta}(f)(x)\r|.
$$
Here, $\{e^{-t\Delta}\}_{t>0}$ denotes the heat semigroup generated by the \emph{Laplace
operator} $\Delta$ on $\rn$.
\vspace{-0.25cm}
\item[(ii)] The \emph{Hardy space $H^p_{z}(\boz)$} is defined by setting
\begin{equation*}
H^p_{z}(\boz):=\lf\{f\in H^p(\rn):\ f=0\ \text{on}\
\ol{\boz}^\complement\r\}/\{f\in H^p(\rn):\ f=0\ \text{on}\ \boz\}.
\end{equation*}
Here and thereafter, $\ol{\boz}$ and $\ol{\boz}^{\complement}$ denote, respectively,
the \emph{closure} of $\boz$ in $\rn$, and the \emph{complementary set} of $\overline{\boz}$
in $\rn$. Moreover, the \emph{quasi-norm} of the element in $H^p_{z}(\boz)$ is
defined to be the quotient norm, namely, for any $f\in H^p_{z}(\boz)$,
$$\|f\|_{H^p_{z}(\boz)}:=\inf\lf\{\|F\|_{H^p(\rn)}:\
F\in H^p(\rn), \ F=0\ \text{on}\
\ol{\boz}^\complement,\ \text{and}\ F|_{\boz}=f\r\},$$
where the infimum is taken over all $F\in H^p(\rn)$ satisfying
$F=0$ on $\ol{\boz}^\complement$, and $F=f$ on $\boz$.
\vspace{-0.25cm}
\item[(iii)] A distribution $f\in\cd'(\boz)$ is said to belong to the
\emph{Hardy space $H^p_{r}(\boz)$} if $f$ is the restriction to $\boz$ of
a distribution $F$ in $H^p(\rn)$, namely,
\begin{align*}
H^p_{r}(\boz):=\,&\{f\in \mathcal{D}'(\boz):\ \text{there exists an}\
F\in H^p(\rn)\ \text{such that}\ F|_{\boz}=f\}\\
=\,&H^p(\rn)/\{F\in H^p(\rn):\ F=0\ \text{on}\ \boz\}.
\end{align*}
Moreover, for any $f\in H^p_{r}(\boz)$, the \emph{quasi-norm}
$\|f\|_{H^p_{r}(\boz)}$ of $f$ in $H^p_{r}(\boz)$
is defined by setting
$$\|f\|_{H^p_{r}(\boz)}:=\inf\lf\{\|F\|_{H^p(\rn)}:\
F\in H^p(\rn)\ \text{and}\ F|_{\boz}=f\r\},$$
where the infimum is taken over all $F\in H^p(\rn)$ satisfying
$F=f$ on $\boz$.
\vspace{-0.25cm}
\item[(iv)] Let $\phi\in C^\fz_{\rm c}(\rn)$ be a nonnegative function such that
$$\supp(\phi):=\left\{x\in\rn:\ \phi(x)\neq0\right\}\subset B(\mathbf{0},1)$$
and $\int_{\rn}\phi(x)\,dx=1$, here and thereafter, $\mathbf{0}$ denotes the \emph{origin} of $\rn$.
For any $f\in\cd'(\Omega)$, the \emph{radial maximal function} $f^+_{\Omega}$
is defined by setting, for any $x\in\boz$,
\begin{equation*}
f^+_{\Omega}(x):=\sup_{t\in(0,\dz(x)/2)}\lf|\phi_t\ast f(x)\r|,
\end{equation*}
where, for any $x\in\boz$, $\dz(x):=\dist(x,\boz^\complement)$ and, for any $t\in(0,\fz)$,
$\phi_t(\cdot):=\frac{1}{t^n}\phi(\frac{\cdot}{t})$.
Then the \emph{Hardy space} $H^{p}(\Omega)$ is defined by setting
$$H^{p}(\Omega):=\lf\{f\in\cd'(\Omega):\ \|f\|_{H^{p}(\Omega)}:=
\lf\|f^+_{\Omega}\r\|_{L^{p}(\Omega)}<\infty\r\}.$$
\end{itemize}
\end{definition}

Let $p\in(0,1]$. From the definitions of both $H^p_r(\boz)$ and $H^p_z(\boz)$, it follows that
$H^p_z(\boz)\subset H^p_r(\boz)$. Moreover, by \cite{cks93}, we find that $H^p_z(\boz)\subsetneqq H^p_r(\boz)$.

\begin{definition}\label{d1.2}
Let $n\ge2$, $\boz$ be a bounded NTA domain of $\rn$,
$p\in(0,1]$, and $L_D$ be as in \eqref{eq1.4}. For any $f\in L^2(\boz)$,
the \emph{Lusin area function, $S_{L_D}(f)$, associated with $L_D$,} is defined by setting,
for any $x\in\boz$,
\begin{equation*}
S_{L_D}(f)(x):=\lf[\int_{\bgz(x)}\lf|t^2 L_De^{-t^2L_D}(f)(y)\r|^2
\frac{dy\,dt}{|B_\boz(x,t)|t}\r]^{1/2},
\end{equation*}
where
$$\bgz(x):=\{(y,t)\in\boz\times(0,\fz):\ |x-y|<t\}$$
and $B_\boz(x,t):=B(x,t)\cap\boz$.

A function $f\in L^2(\boz)$ is said to be in the \emph{set} $\mathbb{H}^p_{L_D}(\boz)$
if $S_{L_D}(f)\in L^p(\boz)$; moreover, for any $f\in\mathbb{H}^p_{L_D}(\boz)$, define
$\|f\|_{H^p_{L_D}(\boz)}:=\|S_{L_D}(f)\|_{L^p(\boz)}$.
Then the \emph{Hardy space} $H^p_{L_D}(\boz)$ is defined to be the completion
of $\mathbb{H}^p_{L_D}(\boz)$ with respect to the \emph{quasi-norm} $\|\cdot\|_{H^p_{L_D}(\boz)}$.
\end{definition}

It is well known that the real-variable theory of Hardy spaces on $\rn$,
initiated by Stein and Weiss \cite{sw60} and then systematically developed
by Fefferman and Stein \cite{fs72}, plays important roles in various fields of analysis
and partial differential equations. In recent years, the study on the real-variable
theory of Hardy spaces on $\rn$ or its domains, associated with different
differential operators, has aroused great interests (see, for instance, \cite{bdl18,dy05,hlmmy,hmm11,sy16}
for Hardy spaces on $\rn$, and \cite{ar03,bd18,bdl18a,ccyy13,dhmmy13,lw17,yy13,yy12,yy18}
for Hardy spaces on domains). Moreover, the Hardy space $H^p(\boz)$ on the domain $\boz$ of $\rn$
was introduced and studied by Miyachi \cite{m90}. Furthermore, when $\boz$ is a Lipschitz domain of $\rn$,
the `geometrical' Hardy spaces $H^p_{r}(\boz)$ and $H^p_{z}(\boz)$ on domains were introduced
by Chang et al. \cite{cks93,cks92} which naturally appear in the study of the regularity of the Dirichlet
and the Neumann boundary value problems of second-order elliptic equations (see,
for instance, \cite{ar03,cds99,cks93,dhmmy13}).

Then we have the following equivalence relation between $H^p_{r}(\boz)$, $H^p(\boz)$,
and $H^p_{L_D}(\boz)$.

\begin{theorem}\label{t1.2}
Let $n\ge2$, $\boz\subset\rn$ be a bounded {\rm NTA} domain, and $\dz_0\in(0,1]$ as in Theorem \ref{t1.1}.
Then, for any given $p\in(\frac{n}{n+1},1]$, $H^p_{r}(\boz)=H^p(\boz)$ with equivalent quasi-norms.
Moreover, for any given $p\in(\frac{n}{n+\dz_0},1]$, $H^p_{L_D}(\boz)=H^p_{r}(\boz)
=H^p(\boz)$ with equivalent quasi-norms.
\end{theorem}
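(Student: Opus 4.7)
The plan is to prove the two chains of equalities separately: first $H^p_r(\boz)=H^p(\boz)$ for $p\in(\frac{n}{n+1},1]$, and then $H^p_{L_D}(\boz)=H^p_r(\boz)$ on the smaller range $p\in(\frac{n}{n+\dz_0},1]$.

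For the first equivalence, the inclusion $H^p_r(\boz)\hookrightarrow H^p(\boz)$ is the easy direction. Given $f\in H^p_r(\boz)$ with extension $F\in H^p(\rn)$ such that $F|_{\boz}=f$, observe that $\supp(\phi_t(x-\cdot))\subset B(x,t)\subset\boz$ whenever $x\in\boz$ and $t\in(0,\dz(x)/2)$, so $\phi_t\ast F(x)=\phi_t\ast f(x)$. Consequently, the intrinsic radial maximal function $f^+_{\boz}$ is pointwise dominated on $\boz$ by the grand maximal function of $F$ on $\rn$, and the classical radial/grand maximal equivalence for $H^p(\rn)$ yields $\|f\|_{H^p(\boz)}\ls\|F\|_{H^p(\rn)}$. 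Taking the infimum over admissible $F$ gives $\|f\|_{H^p(\boz)}\ls\|f\|_{H^p_r(\boz)}$. For the reverse direction $H^p(\boz)\hookrightarrow H^p_r(\boz)$, the plan is to establish an atomic decomposition for $H^p(\boz)$ using a Calder\'on-Zygmund stopping-time decomposition of the radial maximal function combined with a Whitney decomposition of $\boz$, producing $f=\sum_j\lz_j a_j$, where each $a_j$ is a $(p,\fz)$-atom supported in a ball $B_j\subset\boz$ whose radius is comparable to $\dist(B_j,\paz\boz)$, bounded by $|B_j|^{-1/p}$, and satisfying $\int_{\boz}a_j=0$ (only the zeroth moment is required since $p>\frac{n}{n+1}$). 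Extending each $a_j$ by zero to $\rn$ produces a genuine $H^p(\rn)$-atom, so $F:=\sum_j\lz_j a_j$ converges in $H^p(\rn)$, satisfies $F|_{\boz}=f$ and $\|F\|_{H^p(\rn)}\ls\|f\|_{H^p(\boz)}$, which gives the desired quasi-norm bound.

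For the second equivalence with $p\in(\frac{n}{n+\dz_0},1]$, the strategy is through molecular decompositions. Using the functional calculus associated with $L_D$ and tent-space techniques adapted to $\boz$ (following the Coifman-Meyer-Stein and Hofmann-Mayboroda framework), we decompose any $f\in H^p_{L_D}(\boz)$ as $f=\sum_j\lz_j m_j$, where each $m_j=L_D^M b_j$ is a $(p,2,M,\epz)$-molecule associated with $L_D$ and a ball $B_j\cap\boz$ with $M$ large enough. The decisive input is Theorem \ref{t1.1}: the H\"older continuity of order $\dz\in(0,\dz_0)$ of the heat kernels $\{K^{L_D}_t\}_{t>0}$, together with the Gaussian bound \eqref{eq1.5}, ensures that the zero-extension of each such $L_D$-molecule is a Taibleson-Weiss-type $H^p(\rn)$-molecule for any $p\in(\frac{n}{n+\dz},1]$, and hence lies in $H^p_r(\boz)$ with quasi-norm control $\|m_j\|_{H^p_r(\boz)}\ls 1$. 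Summing gives $H^p_{L_D}(\boz)\hookrightarrow H^p_r(\boz)$. Conversely, for the inclusion $H^p_r(\boz)\hookrightarrow H^p_{L_D}(\boz)$, we use the already established identification $H^p_r(\boz)=H^p(\boz)$ and its atomic decomposition from the first part; for each such atom $a$ supported in $B\subset\boz$ with $\int a=0$, we estimate $\|S_{L_D}(a)\|_{L^p(\boz)}$ by splitting $\boz$ into the enlargements of $B$ and their complements, using $L^2$ boundedness of $S_{L_D}$ near $B$ and Davies-Gaffney-type off-diagonal estimates for $\{t^2L_De^{-t^2L_D}\}_{t>0}$ (consequences of \eqref{eq1.5}) far from $B$.

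The main obstacle is the atomic decomposition of $H^p(\boz)$ in the first part: on a general NTA domain the absence of Lipschitz charts forces a purely metric-geometric construction, and one must exploit the interior corkscrew condition to ensure that the supporting balls of the atoms are Whitney-type and strictly interior to $\boz$, so that the zero-extension preserves the cancellation and localization needed for $H^p(\rn)$-atoms. A secondary technical point is verifying the molecular character after zero-extension in the second part: the restriction $p>\frac{n}{n+\dz_0}$ arises precisely from balancing the available H\"older exponent $\dz<\dz_0$ against the decay required for $H^p(\rn)$-molecules, and any improvement in $\dz_0$ from Theorem \ref{t1.1} would automatically widen this range.
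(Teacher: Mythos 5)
Your easy inclusion $H^p_r(\boz)\subset H^p(\boz)$ via pointwise domination of $f^+_\boz$ by a maximal function of the extension $F$ is fine (and is a legitimate shortcut compared with the paper's atom-by-atom argument). The converse inclusion, however, rests on a decomposition that is false: you claim every $f\in H^p(\boz)$ can be written as $\sum_j\lz_ja_j$ with \emph{every} $a_j$ a mean-zero atom supported in a Whitney-type ball $B_j\subset\boz$. Test this with $f=\mathbf{1}_{\boz}$ and $p=1$: since the defining supremum in $f^+_\boz$ only uses $t<\dz(x)/2$, one has $f^+_\boz\equiv1$, so $\mathbf{1}_\boz\in H^1(\boz)$; if your decomposition existed, the sum would converge in $H^1(\rn)$ to an $F$ supported in $\overline{\boz}$ with $F=1$ a.e.\ on $\boz$, hence $F=\mathbf{1}_\boz$ a.e.\ and $\int_{\rn}F=|\boz|\neq0$, contradicting the vanishing integral of $H^1(\rn)$ functions. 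The correct decomposition (Miyachi, Lemma \ref{l2.3}) unavoidably contains non-cancellative type $(b)$ atoms near $\partial\boz$, and the step you are missing is precisely where the NTA geometry enters: for each such atom $b$ one subtracts $\bigl[|\wz B|^{-1}\int_Bb\bigr]\mathbf{1}_{\wz B}$ with $\wz B\subset\boz^\complement$ an exterior corkscrew ball satisfying $r_{\wz B}\sim r_B$ and $\dist(B,\wz B)\sim r_B$ (Lemma \ref{l2.1}(ii)), so that the corrected function is a genuine $(p,\infty,0)$-atom on $\rn$. Your proposal never invokes the exterior corkscrew condition, and without it this direction is not reachable; the gap also propagates into your second part, where you reuse this decomposition.

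In the second equivalence you have the roles of Theorem \ref{t1.1} reversed, and both inclusions as written have gaps. For $H^p_{L_D}(\boz)\hookrightarrow H^p_r(\boz)$, the zero-extension of a $(p,2,M,\epz)_{L_D}$-molecule is \emph{not} a Taibleson--Weiss molecule: classical molecules require vanishing moments, while $\int_\boz L_D^Mb\,dx\neq0$ in general because the Dirichlet semigroup does not conserve mass; H\"older continuity of $K^{L_D}_t$ supplies smoothness and decay but no cancellation. The paper restores cancellation geometrically: the annular pieces $\az\mathbf{1}_{S_k(B_\boz)}$ are corrected by indicators of exterior balls furnished by Lemma \ref{l2.1}(iii), and in the case $8B\subset\boz$ the residual mean $N_0$ is transported to the boundary along a chain of $\sim2^{k_0}$ interior balls and finally deposited in a ball inside $\boz^\complement$; nothing in your plan replaces this mechanism. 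Conversely, for $H^p_r(\boz)\hookrightarrow H^p_{L_D}(\boz)$, Gaussian/Davies--Gaffney bounds plus $L^2$-boundedness of $S_{L_D}$ are not enough when $p\le1$: without the H\"older estimate for $H^{L_D}_{t^2}(y,\cdot)$ one cannot exploit the atom's cancellation in the large-$t$ regime and only obtains $S_{L_D}(a)(x)\ls|x-x_B|^{-n}$ away from $4B$, which fails to be $p$-th power integrable. It is exactly here that Theorem \ref{t1.1} (together with the duality of $H^p(\rn)$ with the Campanato space $\mathcal{L}^{1/p-1,1,0}(\rn)$, needed to justify termwise application of the kernels) is used, and this is where the restriction $p>\frac{n}{n+\dz_0}$ genuinely originates --- not, as you assert, in the molecular extension step, which in the paper works for all $p\in(\frac{n}{n+1},1]$.
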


\begin{remark}\label{r1.2}
Let $\dz_0\in(0,1]$ be as in Theorem \ref{t1.1}. When $\boz$ is a bounded Lipschitz domain of $\rn$,
the equivalence of the Hardy spaces $H^1_{L_D}(\boz)$ and $H^1_{r}(\boz)$ in Theorem \ref{t1.2} was
obtained by Auscher and Russ \cite[Theorem 1 and Proposition 5]{ar03}.
Moreover, when $\boz$ is a bounded Lipschitz domain of $\rn$, the operator $L_D$ is non-negative
and adjoint, and $p\in(\frac{n}{n+\dz_0},1]$, the equivalence of the spaces $H^p_{L_D}(\boz)$,
$H^p_{r}(\boz)$, and $H^p(\boz)$ in Theorem \ref{t1.2} was established by Bui et al.
\cite[Theorem 4.4 and Remark 4.5(c)]{bdl18a} (see also \cite[Corollary 4.5]{yy18}).
Thus, Theorem \ref{t1.2} improves the results of Auscher and Russ \cite[Theorem 1 and Proposition 5]{ar03}
and Bui et al. \cite[Theorem 4.4 and Remark 4.5(c)]{bdl18a} by weakening the assumptions
on both the domain $\boz$ and the operator $L_D$.

Furthermore, when $\boz$ is a bounded Lipschitz domain, the equivalence of $H^p_{r}(\boz)$ and
$H^p(\boz)$ for any given $p\in(\frac{n}{n+1},1]$ was essentially proved by Chang et al.
\cite[Theorem 2.7]{cks93}. Recall that NTA domains contain Lipschitz domains as special examples
(see, for instance, \cite{jk82,t97} or Remark \ref{r2.2}(iii) below).
Therefore, the equivalence of $H^p_{r}(\boz)$ and $H^p(\boz)$ for any given $p\in(\frac{n}{n+1},1]$
obtained in Theorem \ref{t1.2} improves \cite[Theorem 2.7]{cks93} via weakening the assumption on
the domain $\boz$ under consideration.
\end{remark}

By subtly using some geometrical properties of NTA domains, obtained in Lemma \ref{l2.1} below,
the reflection technology related to NTA domains, the atomic characterizations of both $H^p(\rn)$
and $H^p(\boz)$, and the H\"older continuity of the heat kernels $\{K^{L_D}_t\}_{t>0}$
given in Theorem \ref{t1.1}, we show Theorem \ref{t1.2}.

Let $n\ge2$, $p\in(1,\fz)$, and $\boz\subset\rn$ be a bounded NTA domain. Assume that
\begin{align}\label{eq1.7}
p_\ast:=
\begin{cases}
\dfrac{np}{n+p}\ \ &\text{when}\,p\in\lf(\dfrac{n}{n-1},\fz\r),\\
1+\epz\ \ &\text{when}\,p\in\lf(1,\dfrac{n}{n-1}\r],
\end{cases}
\end{align}
where $\epz\in(0,\fz)$ is an arbitrary given constant.
Let $p\in(1,\fz)$, $f\in L^{p_\ast}(\boz)$, and the real-valued, bounded, and measurable matrix $A$
satisfy \eqref{eq1.3}. Then a function $u$ is called a \emph{weak solution} of the
\emph{Dirichlet boundary value problem} \eqref{eq1.1}
if $u\in W^{1,p}_{0}(\boz)$ and, for any $\varphi\in C^\fz_\mathrm{c}(\boz)$,
\begin{equation}\label{eq1.8}
\int_{\boz}A(x)\nabla u(x)\cdot\nabla\varphi(x)\,dx=\int_\boz f(x)\varphi(x)\,dx.
\end{equation}
Moreover, the Dirichlet problem \eqref{eq1.1} is said to be \emph{uniquely solvable}
if, for any $f\in L^{p_\ast}(\boz)$, there exists a unique $u\in W^{1,p}_0(\boz)$
such that \eqref{eq1.8} holds true for any $\varphi\in C^\fz_{\rm c}(\boz)$.

\begin{remark}\label{r1.3}
Assume that $f\in L^{2_\ast}(\boz)$, where $2_\ast$ is as in \eqref{eq1.7} with $p$ replaced by $2$.
Then, by the Sobolev inequality (see, for instance, \cite[Theorem 1.1]{bk95}) and the Lax--Milgram theorem
(see, for instance, \cite[Theorem 5.8]{gt01}), we find that the Dirichlet problem \eqref{eq1.1} is uniquely solvable
and the weak solution $u$ satisfies
\begin{align}\label{eq1.9}
\|\nabla u\|_{L^2(\boz;\rn)}\le C\|f\|_{L^{2_\ast}(\boz)},
\end{align}
where $C$ is a positive constant independent of $u$ and $f$.
\end{remark}

For the Dirichlet problem \eqref{eq1.1} on bounded NTA domains, we have the following global
regularity estimates in both Lebesgue spaces $L^q(\boz)$ with $q\in(1,p_0)$, and Hardy spaces $H^q_z(\boz)$
with $q\in(\frac{n}{n+1},1]$, where $p_0\in(2,\fz)$ is a constant depending only on $n$, the domain $\boz$,
and the matrix $A$.

\begin{theorem}\label{t1.3}
Let $n\ge2$, $\boz$ be a bounded {\rm NTA} domain of $\rn$, and the real-valued, bounded, and
measurable matrix $A$ satisfy \eqref{eq1.3}.
\begin{itemize}
\item[{\rm(i)}] Then there exists a $p_0\in(2,\fz)$, depending only on $n$, $A$, and $\boz$, such that,
for any given $q\in(\frac{n}{n-1},p_0)$ and $p\in(1,n)$ satisfying $\frac{1}{p}-\frac{1}{q}=\frac{1}{n}$,
and any $f\in L^p(\boz)\cap L^2(\boz)$, the weak solution $u$ of the problem
\eqref{eq1.1} uniquely exists and satisfies $\nabla u\in L^q(\boz;\rn)$ and
$$\|\nabla u\|_{L^q(\boz;\rn)}\le C\|f\|_{L^p(\boz)},$$
where $C$ is a positive constant independent of $u$ and $f$.
\vspace{-0.25cm}
\item[{\rm(ii)}] Let $q\in(1,\frac{n}{n-1}]$ and $p\in(\frac{n}{n+1},1]$ satisfy
$\frac{1}{p}-\frac{1}{q}=\frac{1}{n}$, and $f\in H^p_{L_D}(\boz)\cap L^2(\boz)$.
Then the weak solution $u$ of the problem \eqref{eq1.1} uniquely exists and
satisfies $\nabla u\in L^q(\boz;\rn)$ and
$$\|\nabla u\|_{L^q(\boz;\rn)}\le C\|f\|_{H^p_{L_D}(\boz)},$$
where $C$ is a positive constant independent of $u$ and $f$.
\vspace{-0.25cm}
\item[{\rm(iii)}] Let $q\in(\frac{n}{n+1},1]$ and $p\in(\frac{n}{n+2},\frac{n}{n+1}]$
satisfy $\frac{1}{p}-\frac{1}{q}=\frac{1}{n}$, and $f\in H^p_{L_D}(\boz)\cap L^2(\boz)$.
Then the weak solution $u$ of the problem \eqref{eq1.1} uniquely exists and satisfies
$\nabla u\in H^q_{z}(\boz;\rn)$ and
$$\|\nabla u\|_{H^q_{z}(\boz;\rn)}\le C\|f\|_{H^p_{L_D}(\boz)},$$
where $C$ is a positive constant independent of $u$ and $f$. Here and thereafter,
the space $H^q_{z}(\boz;\rn)$ is defined as $L^p(\boz;\rn)$ via $L^p(\boz)$ replaced by $H^q_{z}(\boz)$
[see \eqref{eq1.2}].
\end{itemize}
\end{theorem}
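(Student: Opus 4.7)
My plan is to prove the three parts of Theorem \ref{t1.3} in parallel, with (ii) and (iii) reduced to estimates on individual atoms via the atomic/molecular characterizations supplied by Theorem \ref{t1.2}.

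For part (i), the range $q\in(2,p_0)$ is established through Shen's real-variable extrapolation. The key input is a boundary reverse H\"older inequality for weak solutions $v$ of the homogeneous equation $-\dive(A\nabla v)=0$ on $B(x_0,r)\cap\boz$ that vanish on $B(x_0,r)\cap\paz\boz$: combining the boundary Caccioppoli inequality (which uses the exterior corkscrew property of the NTA domain $\boz$) with the Meyers--Gehring self-improvement gives an exponent $p_0\in(2,\fz)$, depending only on $n$, $\mu_0$, and $\boz$, for which
$$\lf(\fint_{B(x_0,r/2)\cap\boz}|\nabla v|^{p_0}\,dx\r)^{1/p_0}\ls\lf(\fint_{B(x_0,r)\cap\boz}|\nabla v|^2\,dx\r)^{1/2}.$$
Plugging this into Shen's $L^p$ extrapolation framework yields boundedness of $\nabla L_D^{-1}:L^{p}(\boz)\to L^q(\boz;\rn)$ for every $q\in(2,p_0)$ with $\frac1p-\frac1q=\frac1n$. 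To cover the complementary range $q\in(\frac{n}{n-1},2]$, I would use the Green-function representation $u(x)=\int_{\boz}G_{L_D}(x,y)f(y)\,dy$, derive the averaged bound $|\nabla_xG_{L_D}(x,y)|\ls|x-y|^{1-n}$ by integrating \eqref{eq1.5} in $t$ and invoking a Caccioppoli estimate at scale $|x-y|$, and then apply the Hardy--Littlewood--Sobolev inequality. The uniqueness and existence of the weak solution in each range follow from Remark \ref{r1.3}, since $L^2(\boz)\subset L^{2_\ast}(\boz)$ on the bounded domain $\boz$.

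For parts (ii) and (iii), I would use the identifications $H^p_{L_D}(\boz)=H^p(\boz)=H^p_r(\boz)$ from Theorem \ref{t1.2} to decompose any $f\in H^p_{L_D}(\boz)\cap L^2(\boz)$ as $f=\sum_j\lz_ja_j$, where each $a_j$ is a $(p,2)$-atom of $H^p(\boz)$ associated with a ball $B_j$ and the series converges in $L^2(\boz)$ as well as in $H^p(\boz)$. Setting $u_j:=L_D^{-1}(a_j)$, the task reduces to a uniform molecular estimate for $\nabla u_j$. Near $2B_j\cap\boz$ I would use the $L^2$ energy bound $\|\nabla u_j\|_{L^2(\boz;\rn)}\ls\|a_j\|_{L^{2_\ast}(\boz)}$ together with the local $L^q$ estimate from part (i); away from $2B_j$ I would exploit the cancellation $\int_{\boz}a_j\,dx=0$ of interior atoms (respectively, the vanishing of the Green function on $\paz\boz$ for boundary atoms), combined with the H\"older continuity of $K_t^{L_D}$ from Theorem \ref{t1.1}(i) integrated in $t$ against the Gaussian bound \eqref{eq1.5}, to produce a quantitative decay of $\nabla u_j$ away from $x_{B_j}$ with H\"older exponent any $\dz\in(0,\dz_0)$. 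Summing the individual estimates in the $L^q$-quasi-norm gives part (ii), while collecting them in the $H^q_z$-quasi-norm via the molecular characterization gives part (iii); the restriction $p\in(\frac{n}{n+2},\frac{n}{n+1}]$ in (iii) is precisely what keeps $n(\frac1q-1)<\dz_0$, so that the molecular decay produced above suffices.

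I expect the principal difficulty to lie in part (iii): beyond obtaining a sufficiently strong decay for $\nabla u_j$, I also need its zero extension across $\paz\boz$ to define an $H^q(\rn;\rn)$ function supported in $\ol{\boz}$. Justifying this uses the zero-trace property $u_j\in W^{1,2}_0(\boz)$, the NTA geometry (Harnack chains up to the boundary enabling the sharp H\"older decay of the Green function near $\paz\boz$ allowed by Theorem \ref{t1.1}), and a careful molecular description of $H^q_z(\boz;\rn)$ that matches the Dirichlet boundary condition. The boundary-type atoms, whose supporting balls meet $\paz\boz$ and which therefore lack interior cancellation, are the most delicate case, and this is precisely where Theorem \ref{t1.1}(i) is used in full strength.
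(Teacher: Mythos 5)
There is a genuine gap in your treatment of (ii) and (iii), and it lies exactly where you place the heart of your argument: the reduction, via Theorem \ref{t1.2}, to classical $(p,2)$-atoms of $H^p(\boz)$ combined with the H\"older continuity of $K^{L_D}_t$ from Theorem \ref{t1.1}. First, Theorem \ref{t1.2} identifies $H^p_{L_D}(\boz)$ with $H^p(\boz)$ only for $p\in(\frac{n}{n+\dz_0},1]$, and $\dz_0\in(0,1]$ may be strictly less than $1$; since $\frac{n}{n+\dz_0}\ge\frac{n}{n+1}$, this identification covers \emph{none} of the range $p\in(\frac{n}{n+2},\frac{n}{n+1}]$ in (iii) and possibly not all of $(\frac{n}{n+1},1]$ in (ii), so the atomic decomposition you start from is not available on the stated ranges. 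Second, even where it is available, a classical atom carries only one vanishing moment, and the kernel smoothness you can pair it with is limited to exponents $\dz<\dz_0$; the resulting far-field decay of $\nabla L_D^{-1}(a_j)$ is of order $|x-x_{B_j}|^{-(n-1+\dz)}$ (in annular-averaged form), which is not summable in $L^q$ for $q$ near $1$ in (ii) and is far short of the decay $2^{-j\epz}$ with $\epz>n/q\ge n$ required of a $(q,\,q_2,\,0,\,\epz)$-molecule in (iii). Your own closing remark that the range in (iii) ``keeps $n(\frac1q-1)<\dz_0$'' confirms the problem: that inequality holds on the full stated range only if $\dz_0=1$, whereas the theorem is claimed for every bounded NTA domain and every uniformly elliptic $A$, with no constraint tying $p$ to $\dz_0$. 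The paper's proof avoids H\"older continuity of the heat kernel altogether: it decomposes $f$ by the molecular characterization of $H^p_{L_D}(\boz)$ (Lemma \ref{l2.4}), whose $(p,\,q_1,\,M,\,\epz)_{L_D}$-molecules carry $M$ factors of $L_D^{-1}$ and an arbitrarily large decay parameter $\epz>n/p$; the far-field estimates then come from Lemma \ref{l5.1} (annular $L^{q_1}$ bounds for $\nabla_x K^{L_D}_{t,\,k}$, proved from the $L^q$ gradient estimate of Lemma \ref{l3.6} by a cutoff argument) and Lemma \ref{l5.2}, with the large-time part controlled by writing $\az=L_D^M(L_D^{-M}\az)$ to gain $t^{-M}$. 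In (iii), the vanishing moment of the zero extension of $\nabla L_D^{-1}(\az)$ is obtained simply by integrating by parts against a cutoff equal to $1$ on a ball containing $\boz$, using $L_D^{-1}(\az)\in W^{1,q_2}_0(\boz)$ --- no boundary H\"older decay of the Green function is needed.

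A secondary issue concerns part (i) on the range $q\in(\frac{n}{n-1},2]$. Your plan rests on a gradient bound $|\nabla_x G(x,y)|\ls|x-y|^{1-n}$, but with merely bounded measurable coefficients no pointwise bound of this kind holds, and once you retreat to the annular $L^2$-averaged Caccioppoli bound you can no longer ``apply the Hardy--Littlewood--Sobolev inequality'' directly; an additional argument would be required to convert averaged kernel bounds into $L^p\to L^q$ boundedness. The paper instead obtains the subcritical range by interpolation: it proves (ii) first, extends $\nabla L_D^{-1}$ by zero to an operator on $\rn$, and interpolates between its $H^1(\rn)\to L^{\frac{n}{n-1}}(\rn;\rn)$ bound (from (ii) with Theorem \ref{t1.2}) and the $L^p\to L^q$ bounds for $q\in(2,p_0)$ from Lemma \ref{l3.6}, using the complex interpolation of Hardy spaces. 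Your Meyers/Shen-type argument for $q\in(2,p_0)$, on the other hand, matches the paper's Lemmas \ref{l3.6}--\ref{l3.8}.
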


We prove Theorem \ref{t1.3} by the following strategy. We first obtain the global
gradient estimate for the problem \eqref{eq1.1} in $L^q(\boz)$ with $q\in(2,p_0)$, by
using \eqref{eq1.9}, a reverse H\"older inequality for the gradient of the weak solution
of some local Dirichlet boundary value problems (see Lemma \ref{l3.8} below for the details),
and a real-variable argument for $L^q(\boz)$ estimates, essentially established by Shen
\cite[Theorem 3.4]{sh07} (see also \cite[Theorem 4.2.6]{sh18} and \cite[Theorem 3.3]{sh05a}).
Then we show (i) by the global gradient estimate in $L^q(\boz)$ with $q\in(2,p_0)$,
the conclusion of (ii), and the complex interpolation theory of Hardy spaces (see, for instance,
\cite[Theorem 8.1 and (9.3)]{kmm07}).  Moreover, we prove (ii) via establishing
some fine estimate for the kernels $\{K_t^{L_D}\}_{t>0}$ (see Lemma \ref{l5.1} below),
and using the global gradient estimate in $L^q(\boz)$ with $q\in(2,p_0)$, and the molecular
characterization of $H^p_{L_D}(\boz)$. Finally, we show (iii) via using the global gradient
estimate in $L^q(\boz)$ with $q\in(2,p_0)$, and the molecular characterization of
both $H^p_{L_D}(\boz)$ and $H^p(\rn)$.

As a corollary of Theorems \ref{t1.2} and \ref{t1.3}, we have the following conclusion.

\begin{corollary}\label{c1.1}
Let $n\ge2$, $\boz$ be a bounded {\rm NTA} domain of $\rn$, and $\dz_0\in(0,1]$
as in Theorem \ref{t1.1}. Assume that $q\in(1,\frac{n}{n-1}]$ and $p\in(\frac{n}{n+\dz_0},1]$
satisfy $\frac{1}{p}-\frac{1}{q}=\frac{1}{n}$. Let $f\in H^p_{r}(\boz)\cap L^2(\boz)$.
Then the weak solution $u$ of the problem \eqref{eq1.1} uniquely exists and satisfies
$\nabla u\in L^q(\boz;\rn)$ and
$$\|\nabla u\|_{L^q(\boz;\rn)}\le C\|f\|_{H^p_{r}(\boz)},$$
where $C$ is a positive constant independent of $u$ and $f$.
\end{corollary}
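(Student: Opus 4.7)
The plan is to deduce the corollary directly from Theorem \ref{t1.2} and Theorem \ref{t1.3}(ii), with no additional work beyond verifying that the hypothesis on $p$ is compatible with both theorems.

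First, I would check the range of $p$. Since $\dz_0\in(0,1]$, we have $\frac{n}{n+\dz_0}\ge\frac{n}{n+1}$, and hence
$$
\lf(\frac{n}{n+\dz_0},1\r]\subset\lf(\frac{n}{n+1},1\r].
$$
Thus, the assumption $p\in(\frac{n}{n+\dz_0},1]$ is strong enough to invoke Theorem \ref{t1.3}(ii), which requires $p\in(\frac{n}{n+1},1]$, and is exactly the range in which the second half of Theorem \ref{t1.2} provides the equivalence $H^p_{L_D}(\boz)=H^p_r(\boz)$ with equivalent quasi-norms. The pairing of $q$ with $p$ via $\frac{1}{p}-\frac{1}{q}=\frac{1}{n}$, together with $q\in(1,\frac{n}{n-1}]$, matches the hypothesis of Theorem \ref{t1.3}(ii) verbatim.

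Next, given $f\in H^p_r(\boz)\cap L^2(\boz)$, I would apply Theorem \ref{t1.2} to conclude that $f\in H^p_{L_D}(\boz)\cap L^2(\boz)$ and that there exists a positive constant $C_1$, independent of $f$, such that
$$
\|f\|_{H^p_{L_D}(\boz)}\le C_1\|f\|_{H^p_r(\boz)}.
$$
Then Theorem \ref{t1.3}(ii) yields the unique existence of the weak solution $u$ of \eqref{eq1.1}, together with $\nabla u\in L^q(\boz;\rn)$ and a positive constant $C_2$, independent of $f$ and $u$, such that
$$
\|\nabla u\|_{L^q(\boz;\rn)}\le C_2\|f\|_{H^p_{L_D}(\boz)}.
$$
Combining these two estimates with $C:=C_1C_2$ produces the desired bound
$$
\|\nabla u\|_{L^q(\boz;\rn)}\le C\|f\|_{H^p_r(\boz)},
$$
and completes the proof.

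There is essentially no obstacle here: the entire content is already encoded in Theorems \ref{t1.2} and \ref{t1.3}(ii), and the only point worth flagging is the elementary inclusion $(\frac{n}{n+\dz_0},1]\subset(\frac{n}{n+1},1]$ that ensures both theorems can be applied simultaneously on the same range of $p$.
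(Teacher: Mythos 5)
Your proposal is correct and coincides with the paper's intended argument: the paper states Corollary \ref{c1.1} precisely as an immediate consequence of the quasi-norm equivalence $H^p_{L_D}(\boz)=H^p_r(\boz)$ from Theorem \ref{t1.2} (valid for $p\in(\frac{n}{n+\dz_0},1]$, and at the level of $L^2(\boz)$ elements via the inclusion \eqref{eq4.9} in its proof) combined with Theorem \ref{t1.3}(ii), whose hypotheses are met since $(\frac{n}{n+\dz_0},1]\subset(\frac{n}{n+1},1]$. No further comment is needed.
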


\begin{remark}\label{r1.4}
\begin{itemize}
\item[\rm(i)] By an example given in \cite[p.\,120]{at98} (which is essentially due to C. E. Kenig,
as was pointed out in \cite[p.\,119, Theorem 7]{at98}), we find that, even when $\boz\subset\rr^2$ is a
bounded smooth domain, there exists a real-valued, bounded, and measurable matrix $A$ satisfying
\eqref{eq1.3} such that, for any given $p\in(2,\fz)$, the weak solution $u$ of the problem \eqref{eq1.1}
with some $f\in L^2(\boz)$ satisfies $|\nabla u|\not\in L^p(\boz)$. Therefore,
the range $q\in(\frac{n}{n-1},p_0)$ of $q$ obtained in Theorem \ref{t1.3}(i) is sharp.
\vspace{-0.25cm}
\item[\rm(ii)] When $A:=I$ (the identity matrix) and $\boz$ is a bounded Lipschitz domain
of $\rn$, Dahlberg \cite[Theorem 1]{d79} obtained Theorem \ref{t1.3}(i) with $p_0:=3+\epz_0$
when $n\ge3$, and with $p_0:=4+\epz_0$ when $n=2$, where $\epz_0\in(0,\fz)$ is a constant depending
only on $n$ and $\boz$. Meanwhile, Dahlberg \cite{d79} also showed that the range
$q\in(\frac{n}{n-1},p_0)$ of $q$ is sharp in this case. Thus, Theorem \ref{t1.3}(i) extends
the results of Dahlberg \cite[Theorem 1]{d79} via weakening the assumptions on both the
coefficient matrix $A$ and the domain $\boz$.

Moreover, to the best of our knowledge, the global gradient estimates obtained in
(ii) and (iii) of Theorem \ref{t1.3} and Corollary \ref{c1.1} are new even when $\boz\subset\rn$
is a bounded Lipschitz domain.
\end{itemize}
\end{remark}

The organization of the remainder of this article is as follows.

In Section \ref{s2}, we present the definitions of both NTA domains and quasi-convex domains,
some geometrical properties of NTA domains, the definition of the $(\gz,R)$-$\mathrm{BMO}$ condition,
and the atomic and the molecular characterizations of the Hardy spaces $H^p(\rn)$, $H^p(\boz)$,
and $H^p_{L_D}(\boz)$ associated with $L_D$.

Section \ref{s3} is devoted to the proof of Theorem \ref{t1.1}. The proofs of Theorems
\ref{t1.2} and \ref{t1.3} are presented, respectively, in Sections \ref{s4} and \ref{s5}.

Finally, we make some conventions on notation. Throughout the whole article, we always denote by $C$ or $c$ a
\emph{positive constant} which is independent of the main parameters, but it may vary from line to
line. We also use $C_{(\gz,\,\bz,\,\ldots)}$ to denote a  \emph{positive
constant} depending on the indicated parameters $\gz,$ $\bz$, $\ldots$. The \emph{symbol} $f\ls g$ means
that $f\le Cg$. If $f\ls g$ and $g\ls f$, then we write $f\sim g$. If $f\le Cg$ and $g=h$ or $g\le h$,
we then write $f\ls g\sim h$ or $f\ls g\ls h$, \emph{rather than} $f\ls g=h$ or $f\ls g\le h$.
For each ball $B:=B(x_B,r_B)$ in $\rn$, with $x_B\in\rn$ and $r_B\in (0,\fz)$, and $\az\in(0,\fz)$,
let $\az B:=B(x_B,\az r_B)$; furthermore, denote by $B_\boz$ the ball $B\cap\boz$ in $\boz$ with $B$ being
a ball of $\rn$. For any subset $E$ of $\rn$, we denote by $E^\complement$ the \emph{set} $\rn\setminus E$,
and by $\mathbf{1}_{E}$ its \emph{characteristic function}. We also let $\nn:=\{1,\, 2,\, \ldots\}$
and $\zz_+:=\nn\cup\{0\}$. The \emph{symbol} $\lfloor s \rfloor$ for any $s\in\rr$ denotes the largest
integer not greater than $s$. For any multi-index $\alpha:=(\alpha_1,\dots,\alpha_n)
\in\zz^n_+:=(\zz_+)^n$, let $|\alpha|:= \alpha_1+\cdots+\alpha_n.$ For any ball $B$ of $\rn$
or of $\boz$, let $S_j(B):=(2^{j+1}B)\setminus(2^{j}B)$ for any $j\in\nn$, and $S_0(B):=2B$.
Moreover, for any $q\in[1,\fz]$, we denote by $q'$ its \emph{conjugate exponent},
namely, $1/q + 1/q'=1$. Finally, for any measurable set $E\subset\rn$ with $|E|<\fz$, and
any $f\in L^1(E)$, we let
$$
\fint_Ef(x)\,dx:=\frac{1}{|E|}\int_{E}f(x)\,dx.
$$

\section{Preliminaries}\label{s2}

 In this section, we first recall the definitions of NTA domains, quasi-convex domains,
and the $(\gz,R)$-$\mathrm{BMO}$ condition, and then give some geometrical properties of NTA domains.
Moreover, we present the atomic and the molecular characterizations of the Hardy spaces $H^p(\rn)$, $H^p(\boz)$,
and $H^p_{L_D}(\boz)$ associated with $L_D$.

\subsection{NTA Domains}\label{s2.1}
 In this subsection, we first recall the definitions of NTA domains introduced by
Jerison and Kenig \cite{jk82} (see also \cite{kt97,t97}) and quasi-convex domains
introduced by Jia et al. \cite{jlw10}, and then state some geometrical properties of NTA domains.
We begin with recalling several notions.
For any given $x\in\rn$ and measurable subset $E\subset\rn$,
let $\dist(x,E):=\inf\{|x-y|:\ y\in E\}$. Meanwhile, for any measurable subsets $E,\ F\subset\rn$,
let $\dist(E,F):=\inf\{|x-y|:\ x\in E,\,y\in F\}$ and $\diam(E):=\sup\{|x-y|:\ x,\,y\in E\}$.

\begin{definition}\label{d2.1}
Let $n\ge2$, $\boz\subset\rn$ be a \emph{domain}
which means that $\boz$ is a connected open set, and $\boz^\complement:=\rn\backslash\boz$.
\begin{itemize}
\item[{\rm(i)}] The domain $\boz$ is said to satisfy the \emph{interior}
[resp., \emph{exterior}] \emph{corkscrew condition}
if there exist constants $R\in(0,\fz)$ and $\sz\in(0,1)$ such that,
for any $x\in\partial\boz$ and $r\in(0,R)$, there exists a point $x_0\in\boz$
[resp., $x_0\in\boz^\complement$], depending on $x$,
such that $B(x_0,\sz r)\subset\boz\cap B(x,r)$ [resp., $B(x_0,\sz r)\subset\boz^\complement\cap B(x,r)$].
\vspace{-0.25cm}
\item[{\rm(ii)}] The domain $\boz$ is said to satisfy the \emph{Harnack chain condition} if there exist
a constant $m_1\in(1,\fz)$ and a constant $m_2\in(0,\fz)$ such that, for any $x_1,\,x_2\in\boz$ satisfying
$$M:=\frac{|x_1-x_2|}{\min\{\dist(x_1,\partial\boz),\dist(x_2,\partial\boz)\}}>1,
$$
there exists a chain $\{B_i\}_{i=1}^N$ of open Harnack balls, with
$B_i\subset\boz$ for any $i\in\{1,\,\ldots,\,N\}$, that connects $x_1$ to $x_2$; namely,
$x_1\in B_1$, $x_2\in B_N$, $B_i\cap B_{i+1}\neq\emptyset$ for any $i\in\{1,\,\ldots,\,N-1\}$, and, for any
$i\in\{1,\,\ldots,\,N\}$,
$$m_1^{-1}\diam(B_i)\le\dist(B_i,\partial\boz)\le m_1\diam(B_i),
$$
where the integer $N$ satisfies $N\le m_2\log_2 M$.
\vspace{-0.25cm}
\item[{\rm(iii)}] The domain $\boz$ is called a \emph{non-tangentially accessible domain} (for short,
NTA \emph{domain}) if $\boz$ satisfies the interior
and the exterior corkscrew conditions, and the Harnack chain condition.
\end{itemize}
\end{definition}

We point out that NTA domains include Lipschitz domains, $\mathrm{BMO}_1$ domains,
Zygmund domains, quasi-spheres, and some Reifenberg flat domains as special examples
(see, for instance, \cite{jk82,kt97,t97}). Moreover, it is well known that NTA domains
are $W^{1,p}$-extension domains with $p\in[1,\fz)$ (see, for instance, \cite{hkt08,j81}).
Meanwhile, NTA domains have the following geometrical properties.

\begin{lemma}\label{l2.1}
Let $n\ge2$ and $\boz$ be a bounded {\rm NTA} domain of $\rn$.
\begin{itemize}
\item[\rm(i)] Then there exists a constant $C\in(0,1]$, depending only on $n$ and $\boz$,
such that, for any ball $B\subset\rn$ with the center $x_B\in\overline{\boz}$ and
the radius $r_B\in(0,\diam(\boz))$,
$$|B\cap\boz|\ge C|B|.$$
\vspace{-0.85cm}
\item[\rm(ii)] Let $B:=B(x_B,r_B)\subset\rn$ be a ball satisfying that $2B\subset\boz$ and $4B\cap\boz^\complement\neq\emptyset$.
Then there exists a ball $\wz{B}\subset\boz^\complement$ such that $r_{\wz{B}}\sim r_B$ and $\dist(B,\wz{B})\sim r_B$,
where the positive equivalence constants are independent of both $B$ and $\wz{B}$.
\vspace{-0.25cm}
\item[\rm(iii)] For any ball $B\subset\rn$ with the center $x_B\in\partial\boz$ and
the radius $r_B\in(0,\diam(\boz))$, there exists a ball $\wz{B}\subset\boz^\complement\cap B$
such that $r_{\wz{B}}\sim r_B$, where the positive equivalence constants are independent of both $B$ and $\wz{B}$.
In particular, there exists a constant $C\in(0,1]$, depending only on $n$ and $\boz$,
such that, for any ball $B\subset\rn$ with the center $x_B\in\partial\boz$ and
the radius $r_B\in(0,\diam(\boz))$,
$$|B\cap\boz^\complement|\ge C|B|.$$
\end{itemize}
\end{lemma}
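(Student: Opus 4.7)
The proof of Lemma \ref{l2.1} is a direct consequence of the interior and exterior corkscrew conditions from Definition \ref{d2.1}, with only one technical issue to manage: the corkscrew conditions are stated for $r\in(0,R)$, so whenever $r_B$ is comparable to or larger than $R$ the scale must be truncated to $\min\{r_B/2,R/2\}$. Since $r_B<\diam(\boz)$ and $R$, $\diam(\boz)$ are fixed parameters of $\boz$, this truncation only distorts the output radii by a fixed positive multiplicative constant, so all comparabilities claimed in the lemma are preserved. No appeal to the Harnack chain condition is needed.

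I would prove the three parts in the order (iii), (i), (ii), since each reuses the mechanism of the previous one. For (iii), apply the exterior corkscrew at $x_B\in\partial\boz$ at scale $r:=\min\{r_B,R/2\}$: this produces a ball $\wz B\subset\boz^\complement\cap B(x_B,r)\subset\boz^\complement\cap B$ with $r_{\wz B}=\sz r\sim r_B$; the measure bound $|B\cap\boz^\complement|\ge|\wz B|\gs|B|$ then follows at once.

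For (i), split on the position of $x_B$. If $x_B\in\partial\boz$, apply the interior corkscrew at $x_B$ exactly as in (iii). If $x_B\in\boz$ with $\dist(x_B,\partial\boz)\ge r_B/2$, then $B(x_B,r_B/2)\subset B\cap\boz$ directly. Otherwise pick $y\in\partial\boz$ with $|x_B-y|<r_B/2$, apply the interior corkscrew at $y$ at scale $\min\{r_B/2,R/2\}$, and observe that the resulting ball lies in $B(y,r_B/2)\cap\boz\subset B\cap\boz$ and has radius $\sim r_B$. For (ii), the hypotheses $2B\subset\boz$ and $4B\cap\boz^\complement\neq\emptyset$ force $\dist(x_B,\partial\boz)\in[2r_B,4r_B)$; choose a nearest boundary point $z_0\in\partial\boz$ to $x_B$ and apply the exterior corkscrew at $z_0$ at scale $r:=\min\{r_B/2,R/2\}$, producing $\wz B=B(x_0,\sz r)\subset B(z_0,r)\cap\boz^\complement$ with $r_{\wz B}=\sz r\sim r_B$. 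The estimate $\dist(B,\wz B)\ge |x_B-z_0|-r-r_B-\sz r\ge (1-\sz)r_B/2$ gives the lower comparability, while $\dist(B,\wz B)\le|x_B-x_0|\le|x_B-z_0|+r<5r_B$ gives the upper one, so $\dist(B,\wz B)\sim r_B$.

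There is no substantive obstacle in any of the three parts; the only point requiring attention is the uniform calibration of the corkscrew scale described in the opening paragraph, which is why I consistently work at scale $\min\{\cdot,R/2\}$ and absorb the resulting constants using the boundedness of $\boz$.
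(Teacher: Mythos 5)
Your proposal is correct, and for parts (ii) and (iii) it is essentially the paper's own argument: pick the nearest boundary point (or use the boundary center), apply the exterior corkscrew condition at a scale truncated to stay below $R$, and use boundedness of $\boz$ to absorb the truncation into the comparability constants; your distance estimates in (ii), including the lower bound $(1-\sz)r_B/2$ coming from $\dist(x_B,\partial\boz)\ge 2r_B$ and $r\le r_B/2$, are exactly the kind of computation the paper carries out. The genuine difference is in part (i): the paper does not argue from the interior corkscrew condition at all, but instead invokes the fact that NTA domains are $W^{1,p}$-extension domains together with the measure-density theorem of Haj\l asz, Koskela and Tuominen \cite{hkt08}, which yields $|B\cap\boz|\ge C_1|B|$ for all radii up to some $R_0$, and then handles radii in $(R_0,\diam(\boz))$ by shrinking the ball to radius $R_0$ and rescaling. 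Your route is more elementary and self-contained: the three-case split on the position of $x_B$ (boundary center, deep interior center with $\dist(x_B,\partial\boz)\ge r_B/2$, and shallow interior center, where you pass to a nearby boundary point and apply the interior corkscrew at scale $\min\{r_B/2,R/2\}$) derives the measure-density bound directly from Definition \ref{d2.1}, with constants depending only on $n$, $\sz$, $R$, and $\diam(\boz)$, and avoids any citation; what the paper's route buys is brevity, since the extension-domain fact is quoted anyway elsewhere. Neither approach needs the Harnack chain condition, as you correctly note.
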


\begin{proof}
We first show (i). By the fact that $\boz$ is a $W^{1,p}$-extension domain with $p\in[1,\fz)$,
and \cite[Theorem 2]{hkt08}, we conclude that there exist constants $R_0\in(0,\infty)$ and
$C_1\in(0,1]$ such that, for any given ball $B:=B(x_B,r_B)$ of $\rn$ with $x_B\in\overline{\boz}$
and $r_B\in(0,R_0]$,
\begin{equation}\label{eq2.1}
|B\cap\boz|\ge C_1|B|.
\end{equation}

If $\diam(\boz)\le R_0$, then, from \eqref{eq2.1}, it follows that (i) holds true in this case.
If $\diam(\boz)>R_0$, then, for any ball $B:=B(x_B,r_B)$ of $\rn$ with $x_B\in\overline{\boz}$
and $r_B\in(R_0,\diam(\boz))$,
\begin{align*}
|B\cap\boz|&\ge|B(x_B,R_0)\cap\boz|\ge C_1|B(x_B,R_0)|\\
&=C_1\lf[\frac{R_0}{\diam(\boz)}\r]^n|B(x_B,\diam(\boz))|
\ge C_1\lf[\frac{R_0}{\diam(\boz)}\r]^n|B|,
\end{align*}
which, together with \eqref{eq2.1}, further implies that (i) holds true in this case.
This finishes the proof of (i).

Now, we prove (ii). Let $y_B\in\partial\boz$ be such that $|x_B-y_B|=\dist(x_B,\partial\boz)$.
Define $\ell:=|x_B-y_B|$. Then $\ell\in(2r_{B},4r_{B})$.
By the exterior corkscrew condition of $\boz$, we find that there exists a ball
$\wz{B}:=B(x_0,\sigma r)\subset[\boz^\complement\cap B(y_B,r)]$, where
$r\in(\frac{1}{2}\min\{R,\ell\},\min\{R,\ell\})$, and $\sz$ and $R$ are as in Definition \ref{d2.1}.
From this, we deduce that
$$
\dfrac{r_{B}}{r_{\wz{B}}}\le\max\lf\{\frac{\diam(\boz)}{2\sigma R}, \frac{1}{\sigma}\r\}
$$
and
$$
\dfrac{r_{B}}{r_{\wz{B}}}\ge\frac{1}{4\sigma},
$$
which implies that $r_{B}\sim r_{\wz{B}}$. Moreover, it is easy to see that $\dist(B,\wz{B})\ge r_B$ and
$$
\dist\lf(B,\wz{B}\r)\le |x_B-y_B|+r=\ell+\frac{1}{\sz}r_{\wz{B}}\ls r_B.
$$
Therefore, $\dist(B,\wz{B})\sim r_B$. This finishes the proof of (ii).

Finally, we show (iii). Let $B:=B(x_B,r_B)$ be a ball of $\rn$ with the center $x_B\in\partial\boz$ and
the radius $r_B\in(0,\diam(\boz))$. Then, when $r_B\in(0,\min\{R,\diam(\boz)\})$,
where $R\in(0,\fz)$ is as in Definition \ref{d2.1}, from the exterior corkscrew
condition of $\boz$, we deduce that there exists a ball $\wz{B}:=B(x_0,\sigma r_B)
\subset[\boz^\complement\cap B(x_B,r_B)]$, which further implies that $r_{\wz{B}}\sim r_B$ and
$$
\lf|B(x_B,r_B)\cap\boz^\complement\r|\ge|B(x_0,\sigma r_B)|\gs|B(x_B,r_B)|.
$$
Thus, (iii) holds true in the case that $r_B\in(0,\min\{R,\diam(\boz)\})$.

Moreover, when $r_B\in(\min\{R,\diam(\boz)\},\diam(\boz))$, let $\ell:=\min\{R,\diam(\boz)\}/2$.
By the exterior corkscrew condition of $\boz$ again, we conclude that there exists
a ball $\wz{B}:=B(x_0,\sigma \ell)\subset[\boz^\complement\cap B(x_B,\ell)]$, which
implies that $r_{\wz{B}}\sim r_B$ and
\begin{align*}
\lf|B(x_B,r_B)\cap\boz^\complement\r|&\ge\lf|B\lf(x_B,\ell\r)\cap\boz^\complement\r|
\gs\lf|B\lf(x_B,\ell\r)\r|\\
&\sim|B\lf(x_B,\diam(\boz)\r)|\gs|B(x_B,r_B)|.
\end{align*}
From this, it follows that (iii) holds true in the case that $r_B\in(\min\{R,\diam(\boz)\},
\diam(\boz))$. This finishes the proof of (iii) and hence of Lemma \ref{l2.1}.
\end{proof}

\begin{remark}\label{r2.1}
Let $n\ge2$ and $\boz$ be a bounded NTA domain of $\rn$.
By Lemma \ref{l2.1}(i), we find that $(\boz, |\cdot|, dx)$ is a space of homogeneous type in the
sense of Coifman and Weiss \cite{cw71}, where $|\cdot|$ denotes the usual norm on $\rn$, and $dx$ the Lebesgue
measure on $\rn$. Moreover, as a space of homogeneous type, the collection of
all balls of $\boz$ is given by the set
$$\lf\{B_\boz:=B\cap\boz:\  \text{any ball}\ B\subset\rn \ \text{satisfying}\  x_B\in\overline{\boz}\
\text{and}\ r_B\in(0,\diam(\boz))\r\},$$
where $x_B$ denotes the center of $B$ and $r_B$ the radius of $B$.
\end{remark}

Now, we recall the definition of the $(\gz,\sigma,R)$ quasi-convex domain as follows.

\begin{definition}\label{d2.2}
Let $n\ge2$, $\boz\subset\rn$ be a domain, $\gz$, $\sigma\in(0,1)$, and $R\in(0,\fz)$.
Then $\boz$ is called a $(\gz,\sigma,R)$ \emph{quasi\mbox{-}convex domain}
if, for any $x\in \partial\boz$ and $r\in (0,R]$,
\begin{itemize}
\item[\rm(a)] there exists an $x_0\in\boz$, depending on $x$, such that $B(x_0,
\sigma r)\subset[\boz\cap B(x,r)];$
\vspace{-0.25cm}
\item[\rm(b)] there exists a convex domain $V:=V(x,r)$, depending on $x$ and $r$,
such that $[B(x,r)\cap\boz]\subset V$ and
$$d_H(\partial(B(x,r)\cap\boz), \partial V)\le \gz r,
$$
where $d_H(\cdot,\cdot)$ denotes the \emph{Hausdorff distance} which is defined
by setting, for any non\mbox{-}empty measurable subsets $E_1$ and $E_2$ of $\rn$,
$$d_H(E_1,E_2):=\max\lf\{\sup_{x\in E_1}\inf_{y\in E_2}|x-y|,\,
\sup_{x\in E_2}\inf_{y\in E_1}|x-y|\r\}.
$$
\end{itemize}
\end{definition}

The concept of quasi-convex domains was introduced by Jia et al. \cite{jlw10}
to study the global regularity of second-order elliptic equations.
Roughly speaking, a quasi-convex domain is a domain satisfying that the local boundary
is close to be convex at small scales. It is easy to find that, if $\boz$ is a convex domain,
then $\boz$ is a $(\gz,\sz,R)$ quasi-convex domain
for any $\gz\in(0,1)$, some $\sz\in(0,1)$, and some $R\in(0,\fz)$.

\begin{remark}\label{r2.2}
\begin{itemize}
\item [{\rm (i)}] Let $n\ge2$, $\gz\in(0,1)$, and $R\in(0,\fz)$. A domain $\boz\subset\rn$ is called a
$(\gz,R)$-\emph{Reifenberg flat domain} if, for any $x_0\in\partial\boz$ and $r\in(0,R_0]$,
there exists a system $\{y_1,\,\ldots,\,y_n\}$ of coordinates, which may depend on $x_0$ and $r$,
such that, in this coordinate system, $x_0=\mathbf{0}$ and
\begin{equation*}
\lf[B(\mathbf{0},r)\cap\{y\in\rn:\ y_n>\gz r\}\r]\subset \lf[B(\mathbf{0},r)\cap\boz\r]\subset
\lf[B(\mathbf{0},r)\cap\{y\in\rn:\ y_n>-\gz r\}\r].
\end{equation*}

The Reifenberg flat domain was introduced by Reifenberg \cite{r60},
which naturally appears in the theory of both minimal surfaces and free boundary problems.
A typical example of Reifenberg flat domains is the well-known Van Koch snowflake
(see, for instance, \cite{t97}). Moreover, it was shown by Kenig and Toro \cite[Theorem 3.1]{kt97}
that there exists a $\gz_0(n)\in(0,1)$, depending only on $n$, such that, if $\boz$ is a
$(\gz,R)$-Reifenberg flat domain for some $\gz\in(0,\gz_0(n)]$ and $R\in(0,\fz)$, then $\boz$ is an
NTA domain. Meanwhile, it is easy to see that a $(\gz,R)$-Reifenberg flat domain is also a
$(\gz,\sz,R)$ quasi-convex domain for some $\sz\in(0,1)$. In recent years, boundary value problems
of elliptic or parabolic equations on Reifenberg flat domains have been widely concerned and studied
(see, for instance, \cite{bd17,bw04,dk12,dk18,dl21}).
\vspace{-0.25cm}
\item [{\rm (ii)}] It is known that, for any open set $\boz\subset\rn$ with compact boundary,
$\boz$ is a semi-convex domain of $\rn$ if and only if $\boz$ is a Lipschitz domain satisfying
a uniform exterior ball condition (see, for instance, \cite{mmmy10,mmy10,ycyy20} for the
definitions of both the semi-convex domain and the uniform exterior ball condition). It is worth pointing out that
convex domains of $\rn$ are semi-convex domains and (semi-)convex domains are special cases
of Lipschitz domains (see, for instance, \cite{mmmy10,mmy10,ycyy20}). Furthermore, a (semi-)convex
domain is also a $(\gz,\sz,R)$ quasi-convex domain for any $\gz\in(0,1)$, some $\sz\in(0,1)$,
and some $R\in(0,\fz)$ (see, for instance, \cite{yyy20}).
\vspace{-0.25cm}
\item[{\rm(iii)}] On NTA domains, Lipschitz domains, quasi-convex domains, Reifenberg flat domains,
$C^1$ domains, and (semi-)convex domains, we have the following relations (see, for instance,
\cite{jk82,jlw10,kt97,t97,ycyy20,yyy20}).
\begin{enumerate}
\item[\rm(a)]$\text{class of $C^1$ domains}\\
~~~~~~~~~\subsetneqq\ \text{class of Lipschitz domains
with small Lipschitz constants}\\
~~~~~~~~~\subsetneqq\ \text{class of Lipschitz domains}\\
~~~~~~~~~\subsetneqq\ \text{class of NTA domains};$
\vspace{-0.15cm}
\item[\rm(b)]$\text{class of $C^1$ domains}\\
~~~~~~~~~\subsetneqq\ \text{class of Lipschitz domains with small Lipschitz constants}\\
~~~~~~~~~\subsetneqq\ \text{class of Reifenberg flat domains}\\
~~~~~~~~~\subsetneqq\ \text{class of quasi-convex domains};$
\vspace{-0.15cm}
\item[\rm(c)]$\text{class of (semi-)convex domains}\ \subsetneqq\ \text{class of quasi-convex domains}.$
\end{enumerate}
\end{itemize}
\end{remark}

Next, we recall the definition of the $(\gz,R)$-BMO condition (see, for instance, \cite{bw04}).
For any given domain $\boz\subset\rn$, denote by $L^1_{\loc}(\boz)$ the \emph{set of all
locally integrable functions on $\boz$}.

\begin{definition}\label{d2.4}
Let $n\ge2$, $\boz\subset\rn$ be a domain, and $\gz,\,R\in(0,\fz)$.
A function $f\in L^1_{\loc}(\boz)$ is said to satisfy
the \emph{$(\gz,R)$-$\mathrm{BMO}$ condition} if
\begin{equation*}
\|f\|_{\ast,\,R}:=\sup_{B(x,r)\subset\boz,\,r\in(0,R)}\fint_{B(x,r)}|f(y)-f_{B(x,r)}|\,dy\le\gz,
\end{equation*}
where the supremum is taken over all balls $B(x,r)\subset\boz$ with $r\in(0,R)$,
and $f_{B(x,r)}:=\fint_{B(x,r)}f(z)\,dz$.

Moreover, a matrix $A:=\{a_{ij}\}_{i,j=1}^n$ is said to satisfy the \emph{$(\gz,R)$-$\mathrm{BMO}$ condition}
if, for any $i,\,j\in\{1,\,\ldots,\,n\}$, $a_{ij}$ satisfies the $(\gz,R)$-$\mathrm{BMO}$ condition.
\end{definition}

\subsection{Hardy Spaces}\label{s2.2}

 In this subsection, we recall the atomic and the molecular characterizations of
the Hardy spaces $H^p(\rn)$, $H^p(\boz)$, and $H^p_{L_D}(\boz)$ associated with $L_D$.

\begin{definition}\label{d2.5}
Let $p\in(0,1]$, $q\in(1,\infty]$, and $s\in\zz_+$ with $s\ge \lfz n(\frac{1}{p}-1)\rfz$.
\begin{itemize}
\item[(i)] A function $a\in L^q(\rn)$ is called a \emph{$(p,\,q,\,s)$-atom}
if there exists a ball $B$ of $\rn$ such that
\vspace{-0.25cm}
\begin{enumerate}
\item[$\mathrm{(i)_1}$] $\supp (a)\subset B$;
\item[$\mathrm{(i)_2}$] $\|a\|_{L^q(\rn)}\le|B|^{1/q-1/p}$;
\item[$\mathrm{(i)_3}$] $\int_{\rn}a(x)x^{\az}\,dx=0$ for any
$\az\in\zz_+^n$ with $|\az|\le s$.
\end{enumerate}
\vspace{-0.25cm}
\item[(ii)] The \emph{atomic Hardy space} $H^{p,\,q,\,s}_{\mathrm{at}}(\rn)$ is defined to be the set of all
$f\in\cs'(\rn)$ satisfying that $f=\sum_{j=1}^\infty\lz_ja_j$ in $\cs'(\rn)$, where $\{a_j\}_{j=1}^\infty$
is a sequence of $(p,\,q,\,s)$-atoms, and $\{\lz_j\}^\infty_{j=1}\subset\mathbb{C}$ satisfies
$\sum_{j=1}^\fz|\lz_j|^p<\infty.$ Moreover, for any $f\in H^{p,\,q,\,s}_{\mathrm{at}}(\rn)$,
the \emph{quasi-norm} $\|f\|_{H^{p,\,q,\,s}_{\mathrm{at}}(\rn)}$ of $f$ is defined by setting
$$\|f\|_{H^{p,\,q,\,s}_{\mathrm{at}}(\rn)}:=\inf\lf\{\lf(\sum_{j=1}^\fz|\lz_j|^p\r)^{1/p}\r\},$$
where the infimum is taken over all decompositions of $f$ as above.
\end{itemize}
\end{definition}

\begin{definition}\label{d2.6}
Let $p\in(0,1]$, $q\in(1,\infty]$, $s\in\zz_+$ with $s\ge \lfz n(\frac{1}{p}-1)\rfz$, and
$\varepsilon\in(0,\fz)$.
\begin{itemize}
\item[(i)] A function $\az\in L^q(\rn)$ is called a \emph{$(p,\,q,\,s,\,\uc)$-molecule}
associated with the ball $B$ of $\rn$ if
\vspace{-0.25cm}
\begin{enumerate}
\item[$\mathrm{(i)_1}$]  for any $j\in\zz_+$, $\|\az\|_{L^q(S_j(B))}\le
2^{-j\uc}|2^j B|^{1/q}|B|^{-1/p}$.
\item[$\mathrm{(i)_2}$] $\int_{\rn}\az(x)x^{\bz}\,dx=0$ for any $\bz\in\zz_+^n$ with $|\bz|\le s$.
\end{enumerate}
\vspace{-0.25cm}
\item[(ii)] The \emph{molecular Hardy space} $H^{p,\,q,\,s,\,\uc}_{\mathrm{mol}}(\rn)$ is defined to be
the set of all $f\in\cs'(\rn)$ satisfying that $f=\sum_{j=1}^\infty\lz_j\az_j$ in $\cs'(\rn)$,
where $\{\az_j\}_{j=1}^\infty$ is a sequence of $(p,\,q,\,s,\,\uc)$-molecules, and $\{\lz_j\}_{j=1}^\infty\subset\cc$
satisfies $\sum_{j=1}^\fz|\lz_j|^p<\infty.$ Moreover, define
\begin{eqnarray*}
\|f\|_{H^{p,\,q,\,s,\,\uc}_{\mathrm{mol}}(\rn)}:=\inf\lf\{\lf(\sum_{j=1}^\fz|\lz_j|^p\r)^{1/p}\r\},
\end{eqnarray*}
where the infimum is taken over all decompositions of $f$ as above.
\end{itemize}
\end{definition}

Then we have the following atomic and molecular characterizations of the Hardy space
$H^p(\rn)$, respectively (see, for instance, \cite{l78,st93,tw80}).

\begin{lemma}\label{l2.2}
Let $p\in(0,1]$, $q\in(1,\infty]$, and $s\in\zz_+$ with $s\ge \lfz n(\frac{1}{p}-1)\rfz$,
and $\varepsilon\in(\max\{n+s,n/p\},\fz)$. Then the spaces $H^p(\rn)=H^{p,\,q,\,s}_{\mathrm{at}}(\rn)=
H^{p,\,q,\,s,\,\uc}_{\mathrm{mol}}(\rn)$ with equivalent quasi-norms.
\end{lemma}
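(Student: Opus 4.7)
The plan is to prove the chain of continuous inclusions
$$H^{p,\,q,\,s}_{\mathrm{at}}(\rn)\hookrightarrow H^{p,\,q,\,s,\,\uc}_{\mathrm{mol}}(\rn)\hookrightarrow H^p(\rn)\hookrightarrow H^{p,\,q,\,s}_{\mathrm{at}}(\rn),$$
which together give the stated equivalence. Since the inclusions in (a) and (b) below are already found in the classical literature cited just before the lemma (Latter, Stein--Torchinsky, Taibleson--Weiss), the purpose of the proof is mainly to assemble them and fix the constants; the work therefore splits into three self-contained steps whose difficulty increases from left to right.

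Step 1 (atoms are molecules). I would observe directly from Definitions \ref{d2.5} and \ref{d2.6} that every $(p,q,s)$-atom $a$ supported in a ball $B$ is a $(p,q,s,\uc)$-molecule associated with $B$: condition $(\mathrm{i})_2$ of Definition \ref{d2.6} coincides with $(\mathrm{i})_3$ of Definition \ref{d2.5}, while condition $(\mathrm{i})_1$ of Definition \ref{d2.6} is trivially satisfied for $j=0$ from the size estimate $(\mathrm{i})_2$ of Definition \ref{d2.5}, and for $j\ge1$ by the support condition $(\mathrm{i})_1$ of Definition \ref{d2.5} (so the $L^q$-norm on $S_j(B)$ vanishes). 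This yields $\|f\|_{H^{p,\,q,\,s,\,\uc}_{\mathrm{mol}}(\rn)}\le\|f\|_{H^{p,\,q,\,s}_{\mathrm{at}}(\rn)}$.

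Step 2 (molecules belong to $H^p$). Following the standard strategy (see, e.g., Taibleson--Weiss \cite{tw80}), I would show that every $(p,q,s,\uc)$-molecule $\alpha$ associated with a ball $B=B(x_B,r_B)$ admits an atomic decomposition $\alpha=\sum_{j=0}^{\fz}\lambda_j a_j$ in $\cs'(\rn)$, with $\sum_j|\lambda_j|^p\ls 1$ and each $a_j$ a $(p,q,s)$-atom supported in $2^{j+1}B$. The construction is: on each annulus $S_j(B)$, write $\alpha\mathbf{1}_{S_j(B)}$ as a function with mean value zero up to order $s$ by subtracting a polynomial correction $P_j$ (unique in the space of polynomials of degree $\le s$), then absorb the polynomial corrections into a telescoping sum using the moment condition $(\mathrm{i})_2$ of Definition \ref{d2.6}. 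The size estimates $\|\alpha\|_{L^q(S_j(B))}\le 2^{-j\uc}|2^jB|^{1/q}|B|^{-1/p}$, together with $\uc>n+s$ and $\uc>n/p$, guarantee both that $\sum_j|\lambda_j|^p<\fz$ and that the polynomial corrections decay fast enough to rearrange convergently. This yields $\|\alpha\|_{H^{p,\,q,\,s}_{\mathrm{at}}(\rn)}\ls 1$ with a constant depending on $p,q,s,\uc,n$, so the molecular quasi-norm dominates the $H^p$ quasi-norm through Step 3.

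Step 3 (elements of $H^p$ admit an atomic decomposition). I would invoke the grand-maximal-function characterization of $H^p(\rn)$ and the Calder\'on--Zygmund-type decomposition of Fefferman--Stein, in the form developed by Latter \cite{l78} and Stein--Torchinsky \cite{st93}: given $f\in H^p(\rn)$, for each $k\in\zz$ one decomposes the level set $\{x:\,f^\ast(x)>2^k\}$, where $f^\ast$ is the grand maximal function, into a Whitney covering of balls, uses a smooth partition of unity subordinate to this covering, and defines good and bad parts whose bad parts (after subtracting polynomial correctors of degree $\le s$ to kill moments) are constant multiples of $(p,q,s)$-atoms. Summing the telescoping differences between the good parts at levels $k$ and $k+1$ reconstructs $f$ in $\cs'(\rn)$ with $\sum_j|\lambda_j|^p\ls\|f^\ast\|_{L^p(\rn)}^p\sim\|f\|_{H^p(\rn)}^p$. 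The moment cancellation hypothesis $s\ge\lfz n(1/p-1)\rfz$ is exactly what is needed for the polynomial correctors to be controlled.

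The main obstacle is Step 2, where one must handle the polynomial correctors $P_j$ introduced to obtain atoms with the required moment conditions and verify, using the assumption $\uc>\max\{n+s,n/p\}$, that the reshuffled series $\sum_j\lambda_j a_j$ converges in $\cs'(\rn)$ and satisfies the atomic bound. Steps 1 and 3 are then either immediate from the definitions or direct quotations from the cited literature, so no further adaptation is required.
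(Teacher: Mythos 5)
The paper itself offers no proof of this lemma: it is quoted as classical, with references to Latter, Stein, and Taibleson--Weiss, so the only question is whether your sketch is a correct assembly of those classical arguments. Your Steps 1--3 are individually sound (atoms are trivially molecules; molecules admit atomic decompositions via annular truncation, polynomial correction, and telescoping, with $\uc>n/p$ giving $\ell^p$-summability and $\uc>n+s$ controlling the correctors; and the Fefferman--Stein/Latter Calder\'on--Zygmund construction gives $H^p(\rn)\subset H^{p,\,q,\,s}_{\mathrm{at}}(\rn)$).

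However, as written the chain does not close: you never prove the inclusion $H^{p,\,q,\,s}_{\mathrm{at}}(\rn)\subset H^p(\rn)$ (equivalently, that molecules lie in $H^p(\rn)$). Step 2, despite its title, only shows that a molecule has an atomic decomposition, i.e.\ the inclusion of the molecular space into the atomic space; and Step 3 gives the inequality $\|f\|_{H^{p,\,q,\,s}_{\mathrm{at}}(\rn)}\ls\|f\|_{H^p(\rn)}$, which is the \emph{opposite} direction to what your closing sentence of Step 2 claims to extract from it (``the molecular quasi-norm dominates the $H^p$ quasi-norm through Step 3'' is logically backwards). What is missing is the standard but essential estimate that a single $(p,\,q,\,s)$-atom $a$ supported in $B$ satisfies $\|a^+\|_{L^p(\rn)}\le C$ uniformly: on $2B$ one uses the $L^q$-boundedness of the maximal operator and H\"older's inequality, and on $(2B)^\complement$ the vanishing moments up to order $s$ give the decay $|x-x_B|^{-(n+s+1)}$ for the maximal function, which is $p$-integrable precisely because $s\ge\lfz n(\frac1p-1)\rfz$ forces $p(n+s+1)>n$. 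Combined with the $p$-subadditivity of $\|\cdot\|_{H^p(\rn)}^p$ and the convergence of atomic series in $\cs'(\rn)$, this yields $\|\sum_j\lz_ja_j\|_{H^p(\rn)}^p\ls\sum_j|\lz_j|^p$ and closes the circle of inclusions. With that step added, your argument is the standard one underlying the references the paper cites.
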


\begin{definition}\label{d2.7}
Let $n\ge2$, $\Omega$ be a domain of $\rn$, $p\in(0,1]$, $q\in(1,\infty]$, and $s\in\zz_+$ with $s\ge \lfz n(\frac{1}{p}-1)\rfz$.
\begin{itemize}
\item[{\rm(i)}] A ball $B\subset\boz$ is called a \emph{type $(a)$ ball} of $\boz$ if
$4B\subset\Omega$, and a ball $B\subset\boz$ is called a \emph{type $(b)$ ball} of $\boz$
if $2B\cap\Omega^\complement=\emptyset$ but $4B\cap\Omega^\complement\neq\emptyset$.
\vspace{-0.25cm}
\item[(ii)] A function $a\in L^q(\boz)$ is called a \emph{type $(a)$}
\emph{$(p,\,q,\,s)_\boz$-atom} if there exists a type $(a)$ ball $B\subset\boz$
such that $\supp(a)\subset B$ and $a$ is a $(p,\,q,\,s)$-atom.

Moreover, a function $b\in L^q(\boz)$ is called a \emph{type $(b)$}
\emph{$(p,\,q)_\boz$-atom} if there exists a type $(b)$ ball $B\subset\boz$
such that $\supp(b)\subset B$ and $\|b\|_{L^q(\boz)}\le|B|^{1/q-1/p}$.
\vspace{-0.25cm}
\item[(iii)] The \emph{atomic Hardy space} $H^{p,\,q,\,s}_{\mathrm{at}}(\Omega)$
is defined to be the set of all distributions $f\in\cd'(\Omega)$
satisfying that there exist two sequences $\{\lz_j\}^\infty_{j=1}\subset \mathbb{C}$ and
$\{\kappa_j\}^\infty_{j=1}\subset \mathbb{C}$, a sequence $\{a_{j}\}_{j=1}^\fz$
of type $(a)$ $(p,\,q,\,s)_\boz$-atoms, and a sequence $\{b_{j}\}_{j=1}^\fz$
of type $(b)$ $(p,\,q)_\boz$-atoms such that
\begin{align}\label{eq2.2}
f=\sum_{j=1}^\fz\lambda_{j}a_{j} +\sum_{j=1}^\fz\kappa_{j}b_{j}
\end{align}
in $\cd'(\Omega)$, and
$$\sum_{j=1}^\fz|\lz_j|^p+\sum_{j=1}^\fz|\kappa_j|^p<\infty.$$
Furthermore, for any given $f\in H^{p,\,q,\,s}_{\mathrm{at}}(\Omega)$, let
$$\|f\|_{H^{p,\,q,\,s}_{\mathrm{at}}(\Omega)}:=\inf\lf\{\lf(\sum_{j=1}^\fz|\lz_j|^p\r)^{1/p}
+\lf(\sum_{j=1}^\fz|\kappa_j|^p\r)^{1/p}\r\},$$
where the infimum is taken over all decompositions of $f$ as in \eqref{eq2.2}.
\end{itemize}
\end{definition}

Then we have the following atomic characterization of the Hardy space $H^p(\boz)$
on the domain $\boz$ (see, for instance, \cite[Theorem 1]{m90}).

\begin{lemma}\label{l2.3}
Let $n\ge2$ and $\boz\subset\rn$ be a proper domain, $p\in(0,1]$, $q\in(1,\fz]$,
and $s\in\zz_+$ with $s\ge \lfz n(\frac{1}{p}-1)\rfz$. Then $H^{p}(\boz)=H^{p,\,q,\,s}_{\mathrm{at}}(\boz)$
with equivalent quasi-norms.
\end{lemma}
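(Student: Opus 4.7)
The plan is to prove both inclusions $H^{p,\,q,\,s}_{\mathrm{at}}(\boz)\hookrightarrow H^p(\boz)$ and $H^p(\boz)\hookrightarrow H^{p,\,q,\,s}_{\mathrm{at}}(\boz)$. Since $H^p(\boz)$ is quasi-Banach under $\|\cdot\|_{H^p(\boz)}$ and $p\in(0,1]$, the first reduces, via the quasi-triangle inequality for $\|\cdot\|^p$, to a uniform bound $\|a\|_{H^p(\boz)}\ls 1$ for every type $(a)$ and every type $(b)$ atom. The second is obtained by a Whitney/Calder\'on--Zygmund decomposition of the level sets of $f^+_\boz$, in the spirit of Miyachi \cite{m90}.

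For a type $(a)$ atom $a$ in $B$ with $4B\subset\boz$, every $x\in B$ satisfies $\dz(x)\ge 3r_B$, so the zero-extension of $a$ is a $(p,q,s)$-atom on $\rn$ in the sense of Definition \ref{d2.5}; combined with the pointwise bound $a^+_\boz\le a^+$ on $\boz$, Lemma \ref{l2.2} yields $\|a^+_\boz\|_{L^p(\boz)}\le\|a^+\|_{L^p(\rn)}\ls 1$. For a type $(b)$ atom $b$ in $B$ with $2B\cap\boz^\complement=\emptyset$ and $4B\cap\boz^\complement\ne\emptyset$, no cancellation is available, and one instead exploits the truncation $t<\dz(x)/2$ built into $f^+_\boz$. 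Fixing any $z\in 4B\cap\boz^\complement$ gives $\dz(x)\le|x-x_B|+4r_B$ for every $x\in\boz$; hence, if $\phi_t\ast b(x)\ne 0$ for some $t<\dz(x)/2$, then there is $y\in B$ with $|x-x_B|\le|x-y|+r_B<r_B+\dz(x)/2\le r_B+(|x-x_B|+4r_B)/2$, forcing $x\in 6B$. Thus $b^+_\boz$ is supported in $6B\cap\boz$ and is pointwise dominated by the Hardy--Littlewood maximal function $\cm b$; H\"older's inequality and the $L^q$-boundedness of $\cm$ then give
\begin{align*}
\|b^+_\boz\|_{L^p(\boz)}^p
&\le|6B|^{1-p/q}\,\|b^+_\boz\|_{L^q(\boz)}^p\\
&\ls|B|^{1-p/q}\,\|b\|_{L^q(\boz)}^p\ls|B|^{1-p/q}\cdot|B|^{p/q-1}=1.
\end{align*}

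For the reverse inclusion, given $f\in H^p(\boz)$, set $\boz_k:=\{x\in\boz:\,f^+_\boz(x)>2^k\}$ for each $k\in\zz$ and Whitney-decompose $\boz_k$ relative to $\boz$ into a countable family $\{B^k_j\}_j$ of balls whose radii are comparable to $\dist(B^k_j,\boz\setminus\boz_k)$. Each $B^k_j$ falls cleanly into one of two classes: ``interior'' balls with $4B^k_j\subset\boz$, producing type $(a)$ atoms, and ``boundary'' balls with $4B^k_j\cap\boz^\complement\ne\emptyset$, producing type $(b)$ atoms. Using a partition of unity $\{\chi^k_j\}$ subordinate to $\{2B^k_j\}$, I would set $b^k_j:=(f-P^k_j)\chi^k_j$ on interior balls (with $P^k_j$ the $L^2$-projection of $f\chi^k_j$ onto polynomials of degree $\le s$, enforcing vanishing moments up to order $s$) and $b^k_j:=f\chi^k_j$ on boundary balls (where none are required). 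A local Calder\'on--Zygmund/Tauberian argument, using that each Whitney ball neighbors a point where $f^+_\boz\le 2^k$, yields the size bound $\|b^k_j\|_{L^q(\boz)}\ls 2^k|B^k_j|^{1/q}$; setting $\lz^k_j:=C\,2^k|B^k_j|^{1/p}$ and $a^k_j:=(\lz^k_j)^{-1}b^k_j$ produces the required atoms, and
$$
\sum_{k,j}|\lz^k_j|^p\ls\sum_k 2^{kp}|\boz_k|\ls\|f^+_\boz\|_{L^p(\boz)}^p.
$$
The principal obstacle is the careful construction and bookkeeping of this Whitney/Calder\'on--Zygmund decomposition adapted to $\boz$: verifying that each Whitney ball splits cleanly into the two prescribed types, that the moment-subtraction on interior balls yields an $L^q$-bounded remainder, and that the resulting double series converges to $f$ in $\cd'(\boz)$. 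This is technical but parallels Miyachi's $\rn$-case framework in \cite{m90}.
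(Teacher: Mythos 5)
The paper does not actually prove this lemma: it is quoted from Miyachi \cite[Theorem 1]{m90}, so your proposal is being measured against a result the authors take as known. Your first inclusion, $H^{p,q,s}_{\mathrm{at}}(\boz)\hookrightarrow H^p(\boz)$, is essentially correct and complete: the computation showing $\supp(b^+_\boz)\subset 6B$ for a type $(b)$ atom and the H\"older/maximal-function estimate are right (and closely parallel what the authors themselves do for near-boundary atoms in the proof of Theorem \ref{t1.2}). The only slip is the pointwise claim $a^+_\boz\le a^+$ for type $(a)$ atoms: $a^+$ in Definition \ref{d1.1}(i) is the heat-semigroup maximal function, whereas $a^+_\boz$ uses $\phi_t$; you should instead dominate $a^+_\boz$ by $\sup_{t>0}|\phi_t\ast a|$ and invoke the equivalence of radial maximal-function characterizations of $H^p(\rn)$. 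That is cosmetic.

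The second inclusion is the substantive half (and the reason the paper cites Miyachi rather than proving it), and your outline has genuine gaps as written. (i) Bookkeeping: for each fixed $k$ one has $f=g_k+\sum_j b^k_j$ with $b^k_j:=(f-P^k_j)\chi^k_j$; the atoms cannot be the $b^k_j$ summed over all $k$ with coefficients $2^k|B^k_j|^{1/p}$ --- that double series does not reconstruct $f$. One must telescope the good parts, writing $f$ as a limit of the $g_k$ and decomposing each difference $g_{k+1}-g_k$ into atoms on the Whitney balls of $\boz_k$ (Latter's scheme, as adapted by Miyachi); your coefficient count $\sum_{k,j}2^{kp}|B^k_j|\ls\|f^+_\boz\|^p_{L^p(\boz)}$ is the right target, but it attaches to that telescoped decomposition. (ii) The size bound $\|b^k_j\|_{L^q(\boz)}\ls 2^k|B^k_j|^{1/q}$ cannot be deduced from $f^+_\boz\le 2^k$ at a single neighbouring point: $f$ is a priori only a distribution, and controlling $(f-P^k_j)\chi^k_j$ in $L^q$ (or $L^\infty$), as well as the polynomial projections $P^k_j$, requires a truncated grand (or at least nontangential) maximal-function characterization of $H^p(\boz)$, whose equivalence with the single-$\phi$ radial maximal function is itself a nontrivial part of Miyachi's theorem, or else a limiting argument through smoothings of $f$. (iii) Whitney balls with radii comparable to $\dist(B^k_j,\boz\setminus\boz_k)$ may leave $\boz$ near $\partial\boz$, in which case they are neither type $(a)$ nor type $(b)$ (type $(b)$ forces $2B\subset\boz$); one must decompose $\boz_k$ as an open subset of $\rn$, equivalently cap the radii by a multiple of $\dz(\cdot)$, and then re-examine the estimates near the boundary, where the truncation $t<\dz(x)/2$ is exactly what compensates for the absent moment conditions. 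In short, the easy inclusion is proved, but the hard inclusion is left at the level of a plan whose stated steps would not go through verbatim; filling it in amounts to reproducing Miyachi's argument, which is what the paper's citation stands in for.
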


\begin{definition}\label{d2.8}
Let $n\ge2$, $\boz\subset\rn$ be a bounded NTA domain, $L_D$ as in \eqref{eq1.4}, and $p\in(0,1]$.
Assume that $q\in(1,\fz]$, $M\in\nn$, $\epz\in(0,\fz)$, $B:=B(x_B,r_B)$ with $x_B\in\boz$
and $r_B\in(0,\diam\,(\boz))$, and $B_\boz:=B\cap\boz$.
\begin{itemize}
\item[{\rm(i)}] A function $\az\in L^q(\boz)$ is called a \emph{$(p,\,q,\,M,\,\epz)_{L_D}$-molecule}
associated with the ball $B_\boz$ if, for any $k\in\{0,\,\ldots,\,M\}$ and $j\in\zz_+$, it holds true that
$$\lf\|\lf(r_B^{-2} L_D^{-1}\r)^k\az\r\|_{L^q(S_j(B_\boz))}\le
2^{-j\epz}|2^jB_\boz|^{1/q}|B_\boz|^{-1/p}.$$
\item[{\rm(ii)}] For any $f\in L^2(\boz)$,
\begin{align}\label{eq2.3}
f=\sum_{j=1}^\fz\lz_j\az_j
\end{align}
is called a \emph{molecular} $(p,\,q,\,M,\,\epz)_{L_D}$-\emph{representation} of the function $f$ if, for any $j\in\nn$,
$\az_j$ is a $(p,\,q,\,M,\,\epz)_{L_D}$-molecule associated with the ball $B_{\boz,\,j}\subset\boz$,
the summation \eqref{eq2.3} converges in $L^2(\boz)$, and
$\{\lz_j\}_{j=1}^\fz\subset\cc$ satisfies $\sum_{j=1}^\fz|\lz_j|^p<\fz$.

Let
$$\mathbb{H}^{p,\,q,\,M,\,\epz}_{L_D,\,\mathrm{mol}}(\boz):=\lf\{f\in L^2(\boz):\ f\
\text{has a molecular}\ (p,\,q,\,M,\,\epz)_L\text{-representation}\r\}
$$
equipped with the \emph{quasi-norm}
\begin{align*}
\|f\|_{H^{p,\,q,\,M,\,\epz}_{L,\,\mathrm{mol}}(\boz)}
:=\inf\lf\{\lf(\sum_{j=1}^\fz|\lz_j|^p\r)^{1/p}:\ \sum_{j=1}^\fz\lz_j\az_j\
\text{is a}\ (p,\,q,\,M,\,\epz)_{L_D}\text{-representation of}\ f\r\},
\end{align*}
where the infimum is taken over all molecular $(p,\,q,\,M,\,\epz)_{L_D}$-representations of $f$ as above.
Then the \emph{molecular Hardy space} $H^{p,\,q,\,M,\,\epz}_{L_D,\,\mathrm{mol}}(\boz)$
is then defined as the completion of the set $\mathbb{H}^{p,\,q,\,M,\,\epz}_{L_D,\,\mathrm{mol}}(\boz)$
with respect to the \emph{quasi-norm} $\|\cdot\|_{H^{p,\,q,\,M,\,\epz}_{L_D,\,\mathrm{mol}}(\boz)}$.
\end{itemize}
\end{definition}

By Remark \ref{r2.1}, we find that bounded {\rm NTA} domains of $\rn$ are spaces of homogeneous type.
Thus, for the Hardy space $H^p_{L_D}(\boz)$, we have the following molecular characterization
(see, for instance, \cite{bckyy13b}).

\begin{lemma}\label{l2.4}
Let $n\ge2$, $\boz\subset\rn$ be a bounded {\rm NTA} domain, $L_D$ as in \eqref{eq1.4}, and $p\in(0,1]$.
Then, for any $q\in(1,\fz)$, $M\in\nn\cap(\frac{n}{2p},\fz)$, and $\epz\in(\frac{n}{p},\fz)$,
$H^p_{L_D}(\boz)=H^{p,\,q,\,M,\,\epz}_{L_D,\,\mathrm{mol}}(\boz)$ with equivalent quasi-norms.
\end{lemma}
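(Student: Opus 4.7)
The plan is to apply the abstract theory of Hardy spaces associated with operators on spaces of homogeneous type, developed in \cite{bckyy13b}. By Remark~\ref{r2.1}, the triple $(\boz,|\cdot|,dx)$ is a space of homogeneous type, and $L_D$ is maximal-accretive on $L^2(\boz)$ by construction~\eqref{eq1.4}. Standard McIntosh-type arguments, based on the accretivity of the associated sesquilinear form, yield a bounded $H^\infty$-functional calculus for $L_D$ on $L^2(\boz)$, while the pointwise Gaussian bound \eqref{eq1.5} implies the $L^2$-$L^2$ Davies--Gaffney off-diagonal estimates for $\{e^{-tL_D}\}_{t>0}$; the corresponding estimates for $\{(t^2L_D)^k e^{-t^2L_D}\}_{t>0}$ with any $k\in\nn$ then follow by a Cauchy integral argument in a sector (via Ouhabaz \cite{o05}). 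Once these prerequisites are verified, Lemma~\ref{l2.4} becomes a direct instance of the general molecular characterization of \cite{bckyy13b}, and it remains only to sketch the two inclusions.

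For the inclusion $H^{p,\,q,\,M,\,\epz}_{L_D,\,\mathrm{mol}}(\boz)\subset H^p_{L_D}(\boz)$, I would prove $\|S_{L_D}(\az)\|_{L^p(\boz)}\ls 1$ uniformly for every $(p,\,q,\,M,\,\epz)_{L_D}$-molecule $\az$ associated with a ball $B_\boz:=B(x_B,r_B)\cap\boz$, by splitting
\begin{align*}
\|S_{L_D}(\az)\|_{L^p(\boz)}^p\le \|S_{L_D}(\az)\|_{L^p(4B_\boz)}^p+\sum_{j=2}^\fz\|S_{L_D}(\az)\|_{L^p(S_j(B_\boz))}^p.
\end{align*}
The local piece is handled by H\"older's inequality, Lemma~\ref{l2.1}(i), and the $L^2$-boundedness of $S_{L_D}$ coming from the $H^\infty$-calculus. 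On a far annulus, I would further split the Lusin integral at $t=2^{j-1}r_B$: the small-scale portion is controlled by the Gaussian kernel bound and produces exponential decay in $j$, while on the large-scale portion I would write $(t^2L_D)e^{-t^2L_D}\az=(r_B/t)^{2M}(t^2L_D)^{M+1}e^{-t^2L_D}[(r_B^{-2}L_D^{-1})^M\az]$ and exploit the molecular control $\|(r_B^{-2}L_D^{-1})^M\az\|_{L^q(S_j(B_\boz))}\le 2^{-j\epz}|2^jB_\boz|^{1/q}|B_\boz|^{-1/p}$, which produces the gain $(r_B/t)^{2M}$. The restrictions $M>\frac{n}{2p}$ and $\epz>\frac{n}{p}$ are exactly what is needed to sum the resulting series after taking $p$-th powers.

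For the reverse inclusion, I would use the Calder\'on reproducing formula
\begin{align*}
f=c_M\int_0^\fz (t^2L_D)^{M+1}e^{-t^2L_D}f\,\frac{dt}{t},
\end{align*}
valid in $L^2(\boz)$ through the $H^\infty$-calculus, together with the Coifman--Meyer--Stein tent $p$-atomic decomposition on the space of homogeneous type $\boz$. Applied to $F(x,t):=(t^2L_D)e^{-t^2L_D}f(x)$, which satisfies $\|F\|_{T^p(\boz\times(0,\fz))}\sim\|f\|_{H^p_{L_D}(\boz)}$, this decomposition yields $F=\sum_j\lambda_jA_j$ with tent atoms $A_j$ supported in $\widehat{B_{\boz,j}}$. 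Substituting back gives $f=\sum_j\lambda_j\az_j$ with
\begin{align*}
\az_j:=c_M\int_0^{r_{B_{\boz,j}}}(t^2L_D)^Me^{-t^2L_D}\bigl[A_j(\cdot,t)\bigr]\,\frac{dt}{t},
\end{align*}
and combining the $T^2$-size of $A_j$ with the Davies--Gaffney estimates for $\{(t^2L_D)^ke^{-t^2L_D}\}_{t>0}$ up to $k=M$ verifies that each $\az_j$ is, up to a universal constant, a $(p,\,q,\,M,\,\epz)_{L_D}$-molecule associated with a ball comparable to $B_{\boz,j}$.

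The main obstacle I anticipate is the verification of the $L^2$-$L^2$ off-diagonal Davies--Gaffney estimates for $\{(t^2L_D)^ke^{-t^2L_D}\}_{t>0}$ with $k\in\{1,\ldots,M+1\}$ under the Dirichlet realization on the NTA domain $\boz$: although \eqref{eq1.5} supplies the pointwise bound for $k=0$, the analogous bounds for positive powers are needed at several points above and cannot be obtained by a naive extension to $\rn$ because of the Dirichlet boundary condition. Fortunately, they can be derived via the analytic-semigroup / Cauchy integral framework of Ouhabaz \cite{o05}, exploiting only the maximal-accretivity of $L_D$ and the structure of its sesquilinear form. Once this routine but technical verification is in place, the abstract machinery of \cite{bckyy13b} delivers Lemma~\ref{l2.4} with the stated range of parameters.
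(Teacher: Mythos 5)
Your proposal is correct and takes essentially the same route as the paper: the paper gives no direct proof of Lemma \ref{l2.4} but, exactly as you do, notes via Remark \ref{r2.1} that $(\boz,|\cdot|,dx)$ is a space of homogeneous type and that $L_D$, being maximal-accretive with the Gaussian bound \eqref{eq1.5} (hence Davies--Gaffney off-diagonal estimates and a bounded $H^\infty$-calculus on $L^2(\boz)$), falls under the abstract molecular characterization of \cite{bckyy13b}, which is then simply cited. Your sketch of the two inclusions reproduces the standard internal argument of that reference and is consistent with it.
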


\section{Proof of Theorem \ref{t1.1}}\label{s3}

In this section, we show Theorem \ref{t1.1}.
Let $L_D$ be as in \eqref{eq1.4}. It was shown by Gr\"{u}ter and Widman \cite{gw82} that,
when $n\ge3$, the Green function associated with the operator $L_D$ exists; in this case,
denote by $G(\cdot,\cdot)$ the Green function associated with $L_D$.
Then we have the following estimates for $G(\cdot,\cdot)$ which are known
(see, for instance, \cite[Remark 4.9]{hk07}).

\begin{lemma}\label{l3.1}
Let $n\ge3$ and $\Omega$ be a bounded {\rm NTA} domain of $\rn$. Then there exist
constants $C\in(0,\fz)$ and $\bz\in(0,1]$, depending only on $n$, the matrix $A$ appeared
in the definition of $L_D$, and $\boz$, such that, for any $x,\,y\in\Omega$,
\begin{equation}\label{eq3.1}
G(x,y)\le C[\dz(y)]^\bz |x-y|^{2-n-\bz},
\end{equation}
here and thereafter, $\dz(y):=\dist(y,\partial\boz)$ with $\partial\boz$ being the boundary of $\boz$.
\end{lemma}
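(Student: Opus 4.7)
The plan is to combine the classical interior upper bound on the Green function (obtained by integrating the Gaussian heat kernel estimate) with a boundary H\"older regularity estimate that exploits both the Dirichlet condition and the exterior corkscrew property of an NTA domain. Integrating \eqref{eq1.5} in time over $(0,\fz)$ yields the standard pointwise bound
$$G(x,y)=\int_0^\fz K^{L_D}_t(x,y)\,dt\le C|x-y|^{2-n}$$
for all $x\neq y$ in $\boz$, uniformly in the geometry of $\paz\boz$. This handles the ``far from the boundary'' regime: fixing $x,\,y\in\boz$ with $x\neq y$ and writing $r:=|x-y|$, the case $\dz(y)\ge r/4$ already yields \eqref{eq3.1} since the factor $[\dz(y)/r]^\bz\ge 4^{-\bz}$ can be absorbed into $C$.

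In the nontrivial regime $\dz(y)<r/4$, I would pick $y^\ast\in\paz\boz$ with $|y-y^\ast|=\dz(y)$, which forces $|x-y^\ast|\ge 3r/4$ and hence $x\notin\overline{B(y^\ast,r/2)}$. Consequently $v:=G(x,\cdot)$ is a nonnegative weak solution of the adjoint equation $L_D^\ast v=0$ in $\boz\cap B(y^\ast,r/2)$ that vanishes continuously on $\paz\boz\cap B(y^\ast,r/2)$. The key ingredient is a boundary H\"older estimate for such solutions on NTA domains: because $\boz^\complement$ has uniformly positive capacity in every boundary ball (a direct consequence of the exterior corkscrew condition), a De Giorgi--Nash--Moser iteration yields an exponent $\bz\in(0,1]$, depending only on $n$, $A$, and $\boz$, such that
$$v(y)\le C\lf[\frac{\dz(y)}{r}\r]^\bz\sup_{\boz\cap B(y^\ast,r/2)}v.$$
Every $w\in B(y^\ast,r/2)\cap\boz$ satisfies $|x-w|\ge|x-y^\ast|-r/2\ge r/4$, so the interior bound above controls the supremum by $Cr^{2-n}$. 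Substituting this in the previous display gives $G(x,y)\le C\dz(y)^\bz|x-y|^{2-n-\bz}$, which is exactly \eqref{eq3.1}.

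The one genuinely nontrivial ingredient is the boundary H\"older estimate for $L_D^\ast$-harmonic functions vanishing on $\paz\boz$. For smooth or Lipschitz domains it is classical; on a general bounded NTA domain with only bounded measurable coefficients, it has to be derived by combining a standard Moser iteration with a capacity/Wiener-type argument that uses the exterior corkscrew condition to produce uniform oscillation decay up to the boundary. This is the substance of \cite[Remark 4.9]{hk07} and, in the harmonic setting, goes back to Jerison--Kenig \cite{jk82}; once this estimate is accepted as a black box, the remainder of the proof reduces to the short dichotomy above.
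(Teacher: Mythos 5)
Your argument is correct, but it is worth noting that the paper does not prove Lemma \ref{l3.1} at all: it is quoted as a known estimate, with a pointer to \cite[Remark 4.9]{hk07}, and the genuinely new regularity work in the paper starts only with Lemma \ref{l3.2}. Your proposal instead supplies an actual proof sketch, and each step checks out: the bound $G(x,y)\le C|x-y|^{2-n}$ from integrating \eqref{eq1.5} in time (equivalently, \cite[(1.8)]{gw82}, which the paper also uses later); the trivial absorption of the factor $[\dz(y)/|x-y|]^{\bz}$ when $\dz(y)\gtrsim|x-y|$; and, when $\dz(y)<|x-y|/4$, the observation that $G(x,\cdot)$ is a nonnegative $L_D^\ast$-solution on $\boz\cap B(y^\ast,|x-y|/2)$ vanishing on $\partial\boz\cap B(y^\ast,|x-y|/2)$ (the paper itself uses exactly this fact in the proof of Lemma \ref{l3.2}), to which the boundary H\"older/oscillation-decay estimate applies because the exterior corkscrew condition gives a uniform measure (hence capacity) density of $\boz^\complement$ at every boundary point, compare Lemma \ref{l2.1}(iii). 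This is precisely the mechanism underlying the cited Hofmann--Kim estimate, so your route is the ``unpacked'' version of the paper's citation: the paper's choice buys brevity, while your sketch makes explicit that $\bz$ and $C$ depend only on $n$, the ellipticity constant of $A$, and the corkscrew parameters of $\boz$, at the cost of invoking the De Giorgi--Nash--Moser boundary regularity theory as a black box (which you correctly flag, and which is indeed classical for real bounded measurable coefficients and transfers verbatim to the adjoint operator $L_D^\ast$).
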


Now, we prove the following H\"older continuity of the Green function $G$
via using Lemma \ref{l3.1} and the Harnack inequality (see, for instance, \cite[Theorem 8.22]{gt01}).

\begin{lemma}\label{l3.2}
Let $n\ge3$ and $\Omega$ be a bounded {\rm NTA} domain of $\rn$.
Then there exist constants $C\in(0,\fz)$ and $\bz\in(0,1]$, depending only on $n$, the matrix $A$ appeared
in the definition of $L_D$, and $\boz$, such that, for any $x,\,y_1,\,y_2\in\Omega$,
$$\lf|G(x,y_1)-G(x,y_2)\r|\le C\lf|y_1-y_2\r|^\bz \lf[|x-y_1|^{2-n-\bz}+|x-y_2|^{2-n-\bz}\r].$$
\end{lemma}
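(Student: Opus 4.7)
The plan is to fix $x\in\boz$ and view $v(y):=G(x,y)$ as a nonnegative weak solution of the adjoint equation $L_D^*v=0$ on $\boz\setminus\{x\}$ that vanishes on $\partial\boz$, and then to prove H\"older continuity of $v$ at the scale\mbox{-}invariant rate dictated by Lemma \ref{l3.1} together with the classical pointwise bound $G(x,y)\ls|x-y|^{2-n}$ (obtained by combining Lemma \ref{l3.1} with the interior Gr\"uter--Widman estimate). Setting $R:=\min\{|x-y_1|,|x-y_2|\}$, I split the argument into a \emph{far} regime $|y_1-y_2|\ge R/16$ and a \emph{close} regime $|y_1-y_2|<R/16$.

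In the far regime, the triangle inequality forces $|x-y_1|\sim|x-y_2|\ls|y_1-y_2|$, so
$$|G(x,y_1)-G(x,y_2)|\le G(x,y_1)+G(x,y_2)\ls|x-y_1|^{2-n}+|x-y_2|^{2-n}\ls|y_1-y_2|^\bz\lf[|x-y_1|^{2-n-\bz}+|x-y_2|^{2-n-\bz}\r],$$
where the last step inserts the factors $|x-y_i|^\bz\ls|y_1-y_2|^\bz$.

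In the close regime, write $R=|x-y_1|$; then $|x-y_2|\sim R$ and $x\notin B(y_1,R/2)$, so $v$ solves $L_D^*v=0$ on $B(y_1,R/2)\cap\boz$. If $\dz(y_1)\ge 2|y_1-y_2|$, I take $r:=\min\{\dz(y_1),R/4\}$ and invoke the De Giorgi--Nash--Moser interior H\"older regularity on $B(y_1,r)\subset\boz\setminus\{x\}$, bounding $\|v\|_{L^\infty(B(y_1,r))}\ls r^\bz R^{2-n-\bz}$ from Lemma \ref{l3.1} (noting $\dz(y)\le r+\dz(y_1)\ls r$ on $B(y_1,r)$ when $r=\dz(y_1)$, and using the interior bound when $r=R/4$); this gives $|v(y_1)-v(y_2)|\ls(|y_1-y_2|/r)^\bz r^\bz R^{2-n-\bz}=|y_1-y_2|^\bz R^{2-n-\bz}$. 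If instead $\dz(y_1)<2|y_1-y_2|$, I choose $y^*\in\partial\boz$ with $|y_1-y^*|=\dz(y_1)$; then $y_1,y_2\in B(y^*,3|y_1-y_2|)$ and $|x-y^*|>R/2$, so $v$ solves $L_D^*v=0$ on $B(y^*,R/2)\cap\boz$ while vanishing on $B(y^*,R/2)\cap\partial\boz$. Taking $\rho:=3|y_1-y_2|\le R/4$, the boundary H\"older oscillation estimate yields
$$\sup_{B(y^*,\rho)\cap\boz}|v|\ls(\rho/R)^\bz\sup_{B(y^*,R/2)\cap\boz}|v|\ls(\rho/R)^\bz R^{2-n},$$
so that $|v(y_1)-v(y_2)|\ls|y_1-y_2|^\bz R^{2-n-\bz}$ as required.

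The main obstacle is justifying the boundary H\"older decay in the last subcase: one must verify that nonnegative weak solutions of a divergence-form equation on $B(y^*,R/2)\cap\boz$ that vanish on $B(y^*,R/2)\cap\partial\boz$ enjoy H\"older continuity up to $\partial\boz$ with an exponent depending only on $n$, $A$, and $\boz$. This is the content of Gr\"uter--Widman's Theorem (1.9), whose proof rests on the Maz'ya--Gariepy--Ziemer capacity density test; the test requires a uniform lower bound of the form $\mathrm{cap}(\boz^\complement\cap B(y^*,r),B(y^*,2r))\gs r^{n-2}$ at every boundary point $y^*$ and every scale $r\in(0,\diam(\boz))$, which is exactly what the exterior corkscrew condition [Definition \ref{d2.1}(i)] of every NTA domain supplies. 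After combining the three subcases and taking $\bz$ to be the minimum of the exponents produced by Lemma \ref{l3.1}, the interior De Giorgi--Nash--Moser estimate, and the boundary H\"older estimate---all depending only on $n$, $A$, and $\boz$---the stated inequality follows.
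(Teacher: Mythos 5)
Your proposal is correct in substance, and its far and interior regimes coincide with the paper's argument (the paper's Case 1, and its Case 3, where the ``Harnack inequality'' of \cite[Theorem 8.22]{gt01} is exactly the interior oscillation/H\"older decay you invoke, combined with Lemma \ref{l3.1} when $R=\dz(y_1)$ and with $G(x,\cdot)\ls|x-\cdot|^{2-n}$ when $R=|x-y_1|/2$). Where you genuinely diverge is the near-boundary regime $\dz(y_1)\ls|y_1-y_2|$: the paper disposes of it without any boundary regularity theory, by first assuming (as it may, by symmetry) $G(x,y_1)\ge G(x,y_2)$, so that $|G(x,y_1)-G(x,y_2)|\le G(x,y_1)\ls[\dz(y_1)]^\bz|x-y_1|^{2-n-\bz}\ls|y_1-y_2|^\bz|x-y_1|^{2-n-\bz}$ directly from Lemma \ref{l3.1}; in other words, the boundary decay needed for the H\"older estimate is already encoded in the hypothesis \eqref{eq3.1}. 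You instead prove a genuine boundary H\"older oscillation estimate for adjoint solutions vanishing on $B(y^\ast,R/2)\cap\partial\boz$, justified by the capacity density condition that the exterior corkscrew property supplies. Both routes are valid: yours is more self-contained in that it would work with only the global bound $G(x,y)\ls|x-y|^{2-n}$ (it re-derives boundary decay rather than importing it), but it requires the extra ingredients of the Maz'ya-type boundary estimate and the (standard, Gr\"uter--Widman) fact that $G(x,\cdot)$ vanishes on $\partial\boz$ locally in the Sobolev sense, whereas the paper's near-boundary step is a one-line consequence of Lemma \ref{l3.1}. One small slip: in your far regime the claim $|x-y_1|\sim|x-y_2|$ is not forced (one of the two distances may be much smaller than $|y_1-y_2|$), but it is also not needed; what your displayed estimate actually uses is $|x-y_i|\ls|y_1-y_2|$ for $i\in\{1,2\}$, which does follow from the triangle inequality in that regime, so the estimate stands.
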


\begin{proof}
We show the present lemma via borrowing some ideas from the proof of \cite[Theorem (1.9)]{gw82}.
Fix $x,\,y_1,\,y_2\in\Omega$. Without loss of generality, we may assume that $G(x,y_1)\ge G(x,y_2)$.
Now, we prove the present lemma by considering the following three cases on $|y_1-y_2|$.

\emph{Case 1)} $|y_1-y_2|\ge|x-y_1|/2$. In this case, by the upper estimate that $G(x,z)\ls |x-z|^{2-n}$ for any
$x,\,z\in\Omega$ (see, for instance, \cite[(1.8)]{gw82}) and $1\ls|y_1-y_2||x-y_1|^{-1}$, we conclude that,
for any given $\bz\in(0,1]$,
\begin{align}\label{eq3.2}
\lf|G(x,y_1)-G(x,y_2)\r| &=G(x,y_1)-G(x,y_2)\le G(x,y_1)\ls |x-y_1|^{2-n}\\ \nonumber
&\ls\lf|y_1-y_2\r|^\bz|x-y_1|^{2-n-\bz}\\ \nonumber
&\ls\lf|y_1-y_2\r|^\bz\lf[|x-y_1|^{2-n-\bz}+|x-y_2|^{2-n-\bz}\r].
\end{align}

\emph{Case 2)} $|y_1-y_2|<|x-y_1|/2$ and $\dz(y_1)\le|y_1-y_2|$. In this case, from Lemma \ref{l3.1}
and $\dz(y_1)\le|y_1-y_2|$, it follows that there exists a $\bz\in(0,1]$ such that
\begin{align}\label{eq3.3}
\lf|G(x,y_1)-G(x,y_2)\r|&\le G(x,y_1)\ls[\dz(y_1)]^\bz|x-y_1|^{2-n-\bz}\\ \nonumber
&\ls\lf|y_1-y_2\r|^\bz\lf[|x-y_1|^{2-n-\bz}+|x-y_2|^{2-n-\bz}\r].
\end{align}

\emph{Case 3)} $|y_1-y_2|<|x-y_1|/2$ and $\dz(y_1)>|y_1-y_2|$. In this case,
let $R:=\min\{\dz(y_1),|x-y_1|/2\}$. It is easy to see that $L_D^\ast G(x,\cdot)=0$ in $B(y_1,R)$.
Applying the Harnack inequality (see, for instance, \cite[Theorem 8.22]{gt01}),
we find that there exists a $\bz\in(0,1]$ such that, for any $0<r_1\le r_2\le R$,
\begin{align*}
\max_{B(y_1,r_1)}G(x,\cdot)-\min_{B(y_1,r_1)}G(x,\cdot)
\ls \lf(\frac{r_1}{r_2}\r)^\bz\lf[\max_{B(y_1,r_2)}G(x,\cdot)\r],
\end{align*}
which further implies that
\begin{align}\label{eq3.4}
\lf|G(x,y_1)-G(x,y_2)\r|\ls \lf|y_1-y_2\r|^\bz R^{-\bz}\max_{B(y_1,R)}G(x,\cdot).
\end{align}
If $R=|x_1-y|/2$, then, by \eqref{eq3.4} and the upper estimate $G(x,y_1)\ls
|x-y_1|^{2-n}$, we conclude that
\begin{align}\label{eq3.5}
\lf|G(x,y_1)-G(x,y_2)\r|&\ls|y_1-y_2|^\bz|x-y_1|^{2-n-\bz}\\ \nonumber
&\ls\lf|y_1-y_2\r|^\bz\lf[|x-y_1|^{2-n-\bz}+|x-y_2|^{2-n-\bz}\r].
\end{align}
If $R=\dz(y_1)$, then $\dz(y_1)\le|x-y_1|/2$. From this and Lemma \ref{l3.1}, we deduce that, for any $z\in B(y_1,R)$,
\begin{align*}
G(x,z)&\ls[\dz(z)]^\bz|x-z|^{2-n-\bz}\ls\lf[|z-y_1|+\dz(y_1)\r]^\bz\lf[|x-y_1|-|y_1-z|\r]^{2-n-\bz}\\
&\ls [\dz(y_1)]^\bz|x-y_1|^{2-n-\bz},
\end{align*}
which, combined with \eqref{eq3.4}, further implies that
$$\lf|G(x,y_1)-G(x,y_2)\r|\ls\lf|y_1-y_2\r|^\bz\lf[|x-y_1|^{2-n-\bz}+|x-y_2|^{2-n-\bz}\r].$$
This, together with \eqref{eq3.2}, \eqref{eq3.3}, and \eqref{eq3.5}, then finishes the proof of Lemma \ref{l3.2}.
\end{proof}

In what follows, for any given $z\in\cc$, denote by $\mathfrak{R}(z)$ and $\arg z$,
respectively, the real part of $z$ and the argument in $(-\pi,\pi]$ of $z$.
Recall that $L_D$ is a $\omega_0$-accretive operator on $L^2(\boz)$ with some $\omega_0\in[0,\pi/2)$
(see, for instance, \cite{at98} for the definition of the $\omega_0$-accretive operator).
For any given $\tz\in[0,\pi)$, let $\Gamma_\tz:=\{z\in\cc\backslash\{0\},\ |\arg z|<\tz\}$.
It is well known that $-L_D$ generates a holomorphic semigroup
$\{e^{-zL_D}\}_{z\in\mathbb{C},\,|\arg z|<\frac{\pi}{2}-\omega_0}$ in $\Gamma_{\frac{\pi}{2}-\omega_0}$
(see, for instance, \cite{at98,o05}).

Moreover, it is known that, for any $\lambda\in \Gamma_{\pi-\omega_0}$,
the operator $L_D+\lambda I$ is bounded on $L^2(\boz)$, and has the integral kernel $G_\lambda$,
where $I$ denotes the identity operator. Namely, for any given $f\in L^2(\Omega)$ and $x\in\Omega$,
$$(L_D+\lambda I)^{-1}f(x)=\int_\Omega G_\lambda(x,y)f(y)\,dy.$$

\begin{lemma}\label{l3.3}
Assume that $n\ge3$ and $\Omega$ is a bounded {\rm NTA} domain of $\rn$.
Let $\mu\in(\omega_0,\pi/2)$. Then there exists a $\dz\in(0,1]$ such that,
for any $\lambda\in\Gamma_{\pi-\mu}$ and $x,\,y_1,\,y_2\in\boz$ with
$x\neq y_i$ for $i\in\{1,\,2\}$,
$$\lf|G_\lambda(x,y_1)-G_\lambda(x,y_2)\r|\le C\lf[\max_{i\in\{1,2\}}e^{-\gz\sqrt{|\lambda|}|x-y_i|}\r]
|y_1-y_2|^{\dz}\lf[\frac{1}{|x-y_1|^{n-2+\dz}}+\frac{|\lambda|^{\dz/2}}{|x-y_2|^{n-2}}\r],$$
where $C$ is a positive constant depending only on $\mu$, $n$, and $\boz$, and $\gz$
is a positive constant depending only on $\mu$ and $\dz$.
\end{lemma}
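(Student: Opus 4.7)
The plan is to combine the H\"older continuity of the Green function $G=G_0$ established in Lemma \ref{l3.2} with the holomorphic functional calculus of $L_D$ and standard interior elliptic regularity. Two ingredients drive the argument: (a) a pointwise upper bound for $G_\lz$ incorporating exponential decay at rate $\sqrt{|\lz|}$, and (b) a H\"older estimate at length scale $1/\sqrt{|\lz|}$ for the equation $L_D^\ast v+\bar\lz v=0$ satisfied by $v(y):=G_\lz(x,y)$ away from $x$.

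I would first derive the Yukawa-type pointwise bound
$$\lf|G_\lz(x,y)\r|\ls|x-y|^{2-n}\,e^{-\gz\sqrt{|\lz|}\,|x-y|}$$
for all $\lz\in\Gamma_{\pi-\mu}$ and $x,y\in\boz$ with $x\neq y$. Since $-L_D$ generates a holomorphic semigroup on $\Gamma_{\pi/2-\omega_0}\supset\Gamma_{\pi/2-\mu}$, the Laplace representation $G_\lz(x,y)=\int_0^\fz e^{-\lz t}K^{L_D}_t(x,y)\,dt$ can be deformed to a ray $\zeta=re^{-i\theta}$, $r>0$, inside the sector of holomorphy with $\theta$ chosen so that $\mathfrak{R}(\lz\zeta)\gs|\lz||\zeta|$ uniformly in $r$. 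Substituting the Gaussian bound \eqref{eq1.5} (extended to complex time by holomorphy) and optimizing in $r$ then yields the asserted decay.

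When $\sqrt{|\lz|}|y_1-y_2|\ge1$, the target inequality follows from the triangle inequality applied to the above upper bound, since the factor $[\sqrt{|\lz|}|y_1-y_2|]^\dz\ge1$ allows each term $|G_\lz(x,y_i)|\ls|x-y_i|^{2-n}e^{-\gz\sqrt{|\lz|}|x-y_i|}$ to be absorbed into the $|\lz|^{\dz/2}|y_1-y_2|^\dz|x-y_2|^{2-n}$ piece of the target (by elementary case analysis according to whether $|y_1-y_2|$ is large or small relative to $|x-y_i|$). In the complementary regime $\sqrt{|\lz|}|y_1-y_2|<1$, I would apply De Giorgi--Nash--Moser-type interior H\"older regularity to $v$ at the critical radius $R:=\min\{|x-y_1|/2,\,1/\sqrt{|\lz|}\}$; the zero-order perturbation $\bar\lz v$ is absorbed harmlessly since $R^2|\lz|\le 1$, yielding
$$|v(y_1)-v(y_2)|\ls R^{-\dz}|y_1-y_2|^\dz\sup_{B(y_1,R)}|v|.$$
Combined with Step~1, the subcase $R=|x-y_1|/2$ produces the first term $|x-y_1|^{2-n-\dz}$ of the target, while the subcase $R=1/\sqrt{|\lz|}$ produces the factor $|\lz|^{\dz/2}|x-y_2|^{2-n}$ (using $|x-y_1|\sim|x-y_2|$); the exponential factor is preserved throughout after a harmless modification of $\gz$.

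The main obstacle will be to justify the H\"older estimate in Step~3 when the ball $B(y_1,R)$ meets $\partial\boz$, i.e., when $y_1$ lies at distance less than $R$ from the boundary, so interior regularity alone does not suffice. In this case a boundary H\"older estimate for $v$ is needed, which I would derive by combining Lemma \ref{l3.2} (boundary H\"older continuity of $G$, which already encodes the NTA boundary geometry) with the resolvent identity
$$G_\lz(x,y)=G(x,y)-\lz\int_\boz G(x,z)G_\lz(z,y)\,dz,$$
splitting the integral according to the scale $1/\sqrt{|\lz|}$ and using the corkscrew conditions and Harnack chain property of $\boz$ to propagate the H\"older control up to $\partial\boz$ with the correct exponential decay.
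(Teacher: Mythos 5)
Your Step 1 and Step 2 are sound and essentially coincide with the beginning of the paper's argument: the Yukawa-type bound $|G_\lambda(x,y)|\lesssim |x-y|^{2-n}e^{-c\sqrt{|\lambda|}\,|x-y|}$ is exactly \eqref{eq3.7} there (obtained from the Gaussian bound \eqref{eq1.5} by functional calculus), and the regime $\sqrt{|\lambda|}\,|y_1-y_2|\ge 1$ does follow from it by absorption. The genuine gap is the boundary case of your Step 3, and it is not a side issue: $y_1$ may lie arbitrarily close to $\partial\Omega$, which is precisely where the NTA geometry must enter, so this case carries the main content of the lemma. Your proposed repair --- resolvent identity plus Lemma \ref{l3.2}, splitting at scale $1/\sqrt{|\lambda|}$ --- cannot deliver ``the correct exponential decay'' as claimed: in the term $|\lambda|\int_\Omega|G(z,y_1)-G(z,y_2)|\,|G_\lambda(x,z)|\,dz$ the contribution of the region $|x-z|\le|z-y_i|$ is of size $|y_1-y_2|^{\beta}|x-y_1|^{2-n-\beta}$ with no factor $e^{-\gamma\sqrt{|\lambda|}\,|x-y_1|}$, because the exponential inside $G_\lambda(x,z)$ is of order one for $z$ near $x$; invoking the corkscrew and Harnack chain conditions does not help, since the loss of decay has nothing to do with the boundary. (A further, smaller point: the interior De Giorgi--Nash--Moser step must be applied to the real and imaginary parts of $v$ with $\lambda v$ treated as an $L^\infty$ right-hand side, and one needs $|x-z|\gtrsim|x-y_1|$ on the slightly larger ball used for the oscillation estimate, so $R$ should be a small multiple of $|x-y_1|$ rather than $|x-y_1|/2$.)

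The missing idea is the interpolation step that the paper uses to recover the decay. One first proves the H\"older estimate \emph{without} any exponential factor, namely \eqref{eq3.12}, which follows exactly from the resolvent identity, Lemma \ref{l3.2}, and the two-region splitting you describe; this is valid up to the boundary because all boundary information is already encoded in Lemma \ref{l3.2} (via Lemma \ref{l3.1}), so no boundary regularity for the $\lambda$-perturbed equation is ever needed. One then writes the difference as a geometric mean, raising \eqref{eq3.12} to the power $\delta/\beta$ and the decaying pointwise bound \eqref{eq3.7} to the power $1-\delta/\beta$; this trades the exponent $\beta$ for any $\delta\in(0,\beta)$ and produces the factor $e^{-\gamma\sqrt{|\lambda|}\,|x-y_1|}$. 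With this step added your dichotomy and the rescaled interior regularity argument become unnecessary (though they could be made to work in the interior regime $\dz(y_1)\gtrsim R$); without it, either your boundary case remains open or you must prove a genuine boundary oscillation estimate for the $\lambda$-perturbed equation on NTA domains, which rests on the exterior corkscrew (capacity density) condition rather than the Harnack chain property and is substantially more work than the paper's route.
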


\begin{proof}
Let $\lambda\in\Gamma_{\pi-\mu}$. By the resolvent identity (see, for instance,
\cite[p.\,36, (9.1)]{p83}), we conclude that, for any $y,\,z\in\Omega$ with $y\neq z$,
\begin{align}\label{eq3.6}
G_\lambda(y,z)=G(y,z)+\lambda\int_\Omega G(t,z)G_\lambda(y,t)\,dt.
\end{align}
Moreover, since $L_D$ generates a holomorphic semigroup $\{e^{-zL_D}\}_{z\in\Gamma_{\frac{\pi}{2}-\omega_0}}$
(see, for instance, \cite[Theorem 1.52]{o05}), and the kernels $\{K^{L_D}_{t}\}_{t>0}$ of the semigroup
$\{e^{-tL_D}\}_{t>0}$ satisfy the Gaussian upper bound estimate \eqref{eq1.5}, similarly to the proof
of \cite[Lemma 2.3]{dhmmy13}, it follows that there exists a positive constant $c$ such that,
for any $y,\,z\in\Omega$ with $y\neq z$,
\begin{align}\label{eq3.7}
\lf|G_\lambda(y,z)\r|\ls e^{-c\sqrt{|\lambda|}\,|y-z|}\frac1{|y-z|^{n-2}}.
\end{align}
Thus, by \eqref{eq3.6}, \eqref{eq3.7}, and Lemma \ref{l3.2}, we find that there
exists a $\bz\in(0,1]$ such that, for any given $x,\,y_1,\,y_2\in\boz$
with $x\neq y_i$ for any $i\in\{1,\,2\}$,
\begin{align}\label{eq3.8}
&\lf|G_\lambda(x,y_1)-G_\lambda(x,y_2)\r|\\ \nonumber
&\quad\le\lf|G(x,y_1)-G(x,y_2)\r|+
|\lambda|\int_\Omega \lf|G(z,y_1)-G(z,y_2)\r||G_\lambda(x,z)|\,dz \\ \nonumber
&\quad\ls\lf|y_1-y_2\r|^\bz\lf[|x-y_1|^{2-n-\bz}+|x-y_2|^{2-n-\bz}\r]\\ \nonumber
&\quad\quad+|\lambda||y_1-y_2|^\bz\int_\Omega\frac{e^{-c\sqrt{|\lambda|}|x-z|}}{|x-z|^{n-2}}
\lf[|z-y_1|^{2-n-\bz}+|z-y_2|^{2-n-\bz}\r]\,dz.
\end{align}
For any $i\in\{1,\,2\}$, let $\boz_{1}:=\{z\in\Omega:\ |x-z|\le|z-y_i|\}$ and
$\boz_{2}:=\{z\in\Omega:\ |x-z|>|z-y_i|\}$.
Then, for any $i\in\{1,\,2\}$, we have
\begin{align}\label{eq3.9}
&\int_\Omega\frac{e^{-c\sqrt{|\lambda|}|x-z|}}{|x-z|^{n-2}} \frac1{|z-y_i|^{n-2+\bz}}\,dz\\ \nonumber
&\quad=\int_{\boz_1}\frac{e^{-c\sqrt{|\lambda|}|x-z|}}{|x-z|^{n-2}}
\frac1{|z-y_i|^{n-2+\bz}}\,dz+\int_{\boz_2}\cdots=:{\rm I}_1+{\rm I}_2.
\end{align}
For ${\rm I}_1$, from the fact that, for any $z\in\boz_1$, $2|z-y_i|\ge|x-z|+|z-y_j|\ge|x-y_i|$,
we deduce that
\begin{align}\label{eq3.10}
{\rm I}_1\le\frac2{|x-y_i|^{n-2+\bz}}\int_{\boz_1}\frac{e^{-c\sqrt{|\lambda|}|x-z|}}
{|x-z|^{n-2}}\,dz\ls\frac1{|x-y_i|^{n-2+\bz}} \frac1{|\lambda|}.
\end{align}
Moreover, it is easy to see that, for any $z\in\boz_2$, $2|x-z|\ge|x-z|+|z-y_i|\ge|x-y_i|$.
By this, we find that
\begin{align}\label{eq3.11}
{\rm I}_2&\le\frac2{|x-y_i|^{n-2}}\int_{\boz_2} \frac{e^{-c\sqrt{|\lambda|}
|z-y_i|}}{|z-y_i|^{n-2+\bz}}\,dz\ls\frac1{|x-y_i|^{n-2}}\frac1{|\lambda|^{1-\bz/2}}.
\end{align}
To finish the proof of the present lemma, we fix $x,\,y_1,\,y_2\in\boz$
with $x\neq y_i$ for any $i\in\{1,\,2\}$, and, without loss of generally, we may
assume that $|x-y_1|\le|x-y_2|$. Thus, from \eqref{eq3.8}, \eqref{eq3.9},
\eqref{eq3.10}, \eqref{eq3.11}, and $|x-y_1|\le|x-y_2|$, it follows that
\begin{align}\label{eq3.12}
&\lf|G_\lambda(x,y_1)-G_\lambda(x,y_2)\r|\\ \nonumber
&\quad\ls\lf|y_1-y_2\r|^\bz\lf[\frac1{|x-y_1|^{n-2+\bz}}
+\frac1{|x-y_2|^{n-2+\bz}}\r] \\ \nonumber
&\quad\quad+\lf|y_1-y_2\r|^\bz\lf[\frac{|\lambda|^{\bz/2}}{|x-y_1|^{n-2}}
+\frac{|\lambda|^{\bz/2}}{|x-y_2|^{n-2}}\r]\\ \nonumber
&\quad\ls\lf|y_1-y_2\r|^\bz\lf[\frac1{|x-y_1|^{n-2+\bz}}+\frac{|\lambda|^{\bz/2}}{|x-y_1|^{n-2}}\r].
\end{align}
Furthermore, by \eqref{eq3.7} and $|x-y_1|\le|x-y_2|$, we conclude that
\begin{align*}
\lf|G_\lambda(x,y_1)-G_\lambda(x,y_2)\r|
\ls\sum^2_{i=1}e^{-c\sqrt{|\lambda|}|x-y_i|}\frac1{|x-y_i|^{n-2}}
\ls e^{-c\sqrt{|\lambda|}|x-y_1|}\frac1{|x-y_1|^{n-2}},
\end{align*}
which, together with \eqref{eq3.12}, further implies that, for any $\dz\in(0,\bz)$,
\begin{align*}
&\lf|G_\lambda(x,y_1)-G_\lambda(x,y_2)\r| \\
&\quad\ls\lf\{\lf|y_1-y_2\r|^\bz\lf[\frac1{|x-y_1|^{n-2+\bz}}+
\frac{|\lambda|^{\bz/2}}{|x-y_1|^{n-2}}\r]\r\}^{\frac\dz{\bz}}
\lf[e^{-c\sqrt{|\lambda|}|x-y_1|}\frac1{|x-y_1|^{n-2}}\r]^{1-\frac\dz{\bz}}\\
&\quad\ls e^{-\gz\sqrt{|\lambda|}|x-y_1|} \lf|y_1-y_2\r|^\dz
\lf[\frac1{|x-y_1|^{n-2+\bz}}+\frac{|\lambda|^{\bz/2}}{|x-y_1|^{n-2}}\r]^{\frac\dz{\bz}}
\lf[\frac1{|x-y_1|^{n-2}}\r]^{1-\frac\dz{\bz}}\\
&\quad\sim e^{-\gz\sqrt{|\lambda|}|x-y_1|}\lf|y_1-y_2\r|^\dz
\lf[\frac{|x-y_1|^{-\bz}}{|x-y_1|^{n-2}}+\frac{|\lambda|^{\bz/2}}{|x-y_1|^{n-2}}\r]^{\frac\dz{\bz}}
\lf[\frac1{|x-y_1|^{n-2}}\r]^{1-\frac\dz{\bz}}\\
&\quad\sim e^{-\gz\sqrt{|\lambda|}|x-y_1|}\lf|y_1-y_2\r|^\dz
\frac1{|x-y_1|^{n-2}}\lf[\frac{1+|x-y_1|^{\bz}|\lambda|^{\bz/2}}{|x-y_1|^{\bz}}\r]^{\frac\dz{\bz}}\\
&\quad\ls e^{-\gz\sqrt{|\lambda|}|x-y_1|}\lf|y_1-y_2\r|^\dz
\frac1{|x-y_1|^{n-2+\dz}}\lf[{1+|x-y_1|^\dz|\lambda|^{\dz/2}}\r]\\
&\quad\sim e^{-\gz\sqrt{|\lambda|}|x-y_1|} \lf|y_1-y_2\r|^\dz
\lf[\frac1{|x-y_1|^{n-2+\dz}}+\frac{|\lambda|^{\dz/2}}{|x-y_1|^{n-2}}\r].
\end{align*}
This finishes the proof of Lemma \ref{l3.3}.
\end{proof}

\begin{lemma}\label{l3.4}
Let $n\ge3$, $\Omega$ be a bounded {\rm NTA} domain of $\rn$, $\mu\in(\omega_0,\pi/2)$,
and $\lambda\in\Gamma_{\pi-\mu}$. Then, for any $m\in\nn$ with $m>(n+2)/2$,
the operator $(L_D+\lambda I)^{-m}$ has a kernel $R_{\lambda,\,m}$ and there exists
a constant $\dz\in(0,1)$ such that, for any $x,\,y_1,\,y_2\in\Omega$ with $x\neq y_i$
for $i\in\{1,\,2\}$,
$$\lf|R_{\lambda,\,m}(x,y_1)-R_{\lambda,\,m}(x,y_2)\r|\le C|\lambda|^{-m+\frac{n}{2}
+\frac{\dz}{2}}|y_1-y_2|^{\dz}\lf[\max_{i\in\{1,2\}}e^{-c\sqrt{|\lambda|}|x-y_i|}\r],$$
where $C$ is a positive constant depending only on $\mu$, $\dz$, $n$, and $\boz$, and
$c$ is a positive constant depending only on $\mu$ and $\dz$.
\end{lemma}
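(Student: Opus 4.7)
The plan is to represent $(L_D+\lambda I)^{-m}$ as the $(m-1)$-fold iteration of the resolvent $(L_D+\lambda I)^{-1}$, so that its integral kernel is the $(m-1)$-fold composition of $G_\lambda$ on $\boz$:
\[
R_{\lambda,m}(x,y)=\int_{\boz^{m-1}}G_\lambda(x,z_1)G_\lambda(z_1,z_2)\cdots G_\lambda(z_{m-1},y)\,dz_1\cdots dz_{m-1}.
\]
To bound the H\"older difference in $y$, I would place the $y$-difference on the final factor only, writing
\[
R_{\lambda,m}(x,y_1)-R_{\lambda,m}(x,y_2)=\int_{\boz^{m-1}}G_\lambda(x,z_1)\cdots G_\lambda(z_{m-2},z_{m-1})\lf[G_\lambda(z_{m-1},y_1)-G_\lambda(z_{m-1},y_2)\r]\,dz_1\cdots dz_{m-1},
\]
apply Lemma \ref{l3.3} to the bracketed H\"older difference, and apply the Gaussian-type upper bound \eqref{eq3.7} to each of the remaining $m-1$ factors of $G_\lambda$.

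The analytic core is then a single-step convolution estimate of the form
\[
\int_{\boz}|x-z|^{-\az}\,e^{-c\sqrt{|\lambda|}|x-z|}\,|z-y|^{-\bz}\,e^{-c\sqrt{|\lambda|}|z-y|}\,dz\ls e^{-c'\sqrt{|\lambda|}|x-y|}\,\bigl(\text{explicit power of }|\lambda|\text{ and }|x-y|\bigr),
\]
which I would prove for $\az,\bz\in(0,n)$ by splitting the integration region into $\{|x-z|\le|x-y|/2\}$ and its complement, using the triangle inequality in the exponent to extract $e^{-c'\sqrt{|\lambda|}|x-y|}$, and then evaluating a radial Gaussian integral. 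Iterating this single-step bound across the $m-1$ intermediate variables, each integration contributes an extra factor of $|\lambda|^{-1}$ while the Gaussian decay in $|x-y_1|$ is preserved. The two singular terms produced by Lemma \ref{l3.3}, namely $|z_{m-1}-y_1|^{-(n-2+\dz)}$ and $|\lambda|^{\dz/2}|z_{m-1}-y_2|^{-(n-2)}$, after being convolved $m-1$ times with $G_\lambda$-type kernels, both yield, upon localization at the natural scale $|\lambda|^{-1/2}$, a pointwise bound of the asserted form $C|\lambda|^{-m+n/2+\dz/2}|y_1-y_2|^\dz\max_{i\in\{1,2\}}e^{-c\sqrt{|\lambda|}|x-y_i|}$.

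The main obstacle will be the careful bookkeeping required to iterate the convolution estimate $m-1$ times while simultaneously tracking the power of $|x-y|$, the power of $|\lambda|$, and the Gaussian decay rate at each step. The hypothesis $m>(n+2)/2$ translates into $2m-n-\dz>0$ for sufficiently small $\dz\in(0,1)$, which is precisely what is required so that the accumulated singularity produced by Lemma \ref{l3.3} is fully absorbed by the $m-1$ convolutions and the resulting kernel satisfies a pure Gaussian upper bound of the claimed size rather than retaining a local singularity at $x=y_i$. Uniformity in $\lambda\in\Gamma_{\pi-\mu}$ follows from the corresponding uniformity of the constants in \eqref{eq3.7} and Lemma \ref{l3.3} with respect to $\mu\in(\omega_0,\pi/2)$.
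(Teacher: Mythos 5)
Your proposal is correct in substance, but it is organized differently from the paper's proof, and the comparison is instructive. The paper does not iterate the resolvent kernel $m-1$ times: it first invokes the pointwise bound $|R_{\lambda,\,m}(x,y)|\lesssim|\lambda|^{-m+\frac{n}{2}}e^{-\kappa\sqrt{|\lambda|}\,|x-y|}$ for $m>n/2$ [its \eqref{eq3.13}], obtained from the Gaussian heat-kernel bound \eqref{eq1.5} by the argument of Duong--Ouhabaz \cite{do03}, and then writes $(L_D+\lambda I)^{-(m+1)}=(L_D+\lambda I)^{-m}(L_D+\lambda I)^{-1}$, so that only \emph{one} convolution is needed: the H\"older difference is placed on the single factor $G_\lambda(\cdot,y_i)$, Lemma \ref{l3.3} is applied to it, and the exponential decay is extracted by the same triangle-inequality splitting you describe. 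Your route replaces the appeal to \eqref{eq3.13} by building $R_{\lambda,\,m}$ as the $(m-1)$-fold composition of $G_\lambda$ and iterating an elementary convolution estimate starting from \eqref{eq3.7}; this is more self-contained but costs exactly the bookkeeping you acknowledge: the decay constant degrades at each of the finitely many steps, and your single-step lemma with $\alpha,\beta\in(0,n)$ must be supplemented by the case in which the left factor has already become nonsingular (bounded, with a $|\lambda|$-power prefactor), since after roughly $n/2$ compositions the accumulated singularity drops below dimension $n$ and only then is your heuristic ``each integration contributes $|\lambda|^{-1}$'' literally true; in the earlier steps the per-step gain is instead a reduction of the singularity exponent. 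The net scaling count works out in both of your terms: the term $|z_{m-1}-y_1|^{-(n-2+\delta)}$ composed with $m-1$ kernels of type $|u|^{2-n}e^{-c\sqrt{|\lambda|}|u|}$ has total homogeneity $2m-n-\delta>0$ (indeed $m>(n+2)/2$ gives $2m-n>2>\delta$ for every $\delta\in(0,1)$, so the ``sufficiently small $\delta$'' caveat is unnecessary), yielding $|\lambda|^{-m+\frac n2+\frac\delta2}$ at scale $|\lambda|^{-1/2}$, and the term $|\lambda|^{\delta/2}|z_{m-1}-y_2|^{2-n}$ gives the same power; the decay $\max_i e^{-c\sqrt{|\lambda|}|x-y_i|}$ is the weaker (minimum-distance) one and survives the extractions. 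So the proposal would work; the paper's version simply short-circuits your iteration by quoting the kernel bound for $(L_D+\lambda I)^{-m}$ and doing the final convolution only.
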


\begin{proof}
We prove the present lemma via borrowing some ideas from the proof of \cite[Lemma 2.6]{dhmmy13}.
Similarly to the proof of \cite[Theorem 1]{do03}, using \eqref{eq1.5}, we find that,
for any $m\in\nn$ with $m>n/2$, $(L_D+\lambda I)^{-m}$ has a kernel $R_{\lambda,\,m}$
and there exists a positive constant $\kappa$ such that, for any $x,\,y\in\Omega$ with $x\neq y$,
\begin{align}\label{eq3.13}
\lf|R_{\lambda,\,m}(x,y)\r|\ls|\lambda|^{-m+\frac{n}{2}}e^{-\kappa\sqrt{|\lambda|}|x-y|}.
\end{align}
Let $m\in\nn$ and $m>(n+2)/2$. Then, from $(L_D+\lambda I)^{-(m+1)}=(L_D+\lambda I)^{-m}(L_D+\lambda I)^{-1}$,
we deduce that, for any $x,\,y\in\Omega$ with $x\neq y$,
\begin{align}\label{eq3.14}
R_{\lambda,\,m+1}(x,y)=\int_\Omega R_{\lambda,\,m}(x,z)G_\lambda(z,y)\,dz.
\end{align}
Therefore, by \eqref{eq3.13}, \eqref{eq3.14}, and Lemma \ref{l3.3}, we find that
there exists a $\dz\in(0,1)$ such that,
for any $x,\,y_1,\,y_2\in\Omega$ with $x\neq y_i$ for any $i\in\{1,2\}$,
\begin{align}\label{eq3.15}
&\lf|R_{\lambda,\,m+1}(x,y_1)-R_{\lambda,\,m+1}(x,y_2)\r| \\ \nonumber
&\quad\le\int_\Omega \lf|R_{\lambda,\,m}(x,z)\r|\lf|G_\lambda(z,y_1)-G_\lambda(z,y_2)\r|\,dz\\ \nonumber
&\quad\ls|\lambda|^{-m+\frac{n}{2}}|y_1-y_2|^\dz\int_\Omega e^{-\kappa\sqrt{|\lambda|}|x-z|}
e^{-\gz\sqrt{|\lambda|}|z-y_1|}\frac1{|z-y_1|^{n-2+\dz}}\,dz\\ \nonumber
&\quad\quad+|\lambda|^{-m+\frac{n}{2}}|y_1-y_2|^\dz\int_\Omega e^{-\kappa\sqrt{|\lambda|}|x-z|}
e^{-\gz\sqrt{|\lambda|}|z-y_1|}\frac{|\lambda|^{\dz/2}}{|x-y_1|^{n-2}}\,dz\\ \nonumber
&\quad\quad+|\lambda|^{-m+\frac{n}{2}} |y_1-y_2|^\dz\int_\Omega e^{-\kappa\sqrt{|\lambda|}|x-z|}
e^{-\gz\sqrt{|\lambda|}|z-y_2|}\frac1{|z-y_2|^{n-2+\dz}}\,dz\\ \nonumber
&\quad\quad+|\lambda|^{-m+\frac{n}{2}} |y_1-y_2|^\dz\int_\Omega e^{-\kappa\sqrt{|\lambda|}|x-z|}
e^{-\gz\sqrt{|\lambda|}|z-y_2|}\frac{|\lambda|^{\dz/2}}{|x-y_2|^{n-2}}\,dz\\ \nonumber
&\quad=:II_1+II_2+II_3+II_4.
\end{align}

Let $c:=\min\{\kappa,\gz/2\}$, where $\kappa$ is as in \eqref{eq3.13}. From the fact that, for any $z\in\boz$,
$|x-z|+|z-y_1|\ge|x-y_1|$, it follows that
\begin{align*}
&\int_\Omega e^{-\kappa\sqrt{|\lambda|}|x-z|}
e^{-\gz\sqrt{|\lambda|}|z-y_1|}\frac1{|z-y_1|^{n-2+\dz}}\,dz\\
&\quad\le\int_\Omega e^{-c\sqrt{|\lambda|}(|x-z|+|z-y_1|)}
e^{-\frac{\gz}{2}\sqrt{|\lambda|}|z-y_1|}\frac1{|z-y_1|^{n-2+\dz}}\,dz\\
&\quad\le e^{-c\sqrt{|\lambda|}|x-y_1|}\int_\Omega e^{-\frac{\gz}{2}\sqrt{|\lambda|}|z-y_1|}
\frac1{|z-y_1|^{n-2+\dz}}\,dz\\
&\quad\ls e^{-c\sqrt{|\lambda|}|x-y_1|}|\lz|^{-1+\frac{\dz}{2}}\int_\Omega e^{-\frac{\gz}{2}|z-y_1|}
\frac1{|z-y_1|^{n-2+\dz}}\,dz\\
&\quad\ls e^{-c\sqrt{|\lambda|}|x-y_1|}|\lz|^{-1+\frac{\dz}{2}},
\end{align*}
which further implies that
\begin{align}\label{eq3.16}
II_1\ls|\lambda|^{-(m+1)+\frac{n}{2}+\frac{\dz}{2}}|y_1-y_2|^{\dz}
e^{-c\sqrt{|\lambda|}|x-y_1|}.
\end{align}
Similarly to the estimation of \eqref{eq3.16}, for any $j\in\{2,\,3,\,4\}$, we also have
$$
II_j\ls|\lambda|^{-(m+1)+\frac{n}{2}+\frac{\dz}{2}}|y_1-y_2|^{\dz}
e^{-c\sqrt{|\lambda|}|x-y_1|},
$$
which, combined with \eqref{eq3.16} and \eqref{eq3.15}, implies that
$$
\lf|R_{\lambda,\,m+1}(x,y_1)-R_{\lambda,\,m+1}(x,y_2)\r|\ls
|\lambda|^{-(m+1)+\frac{n}{2}+\frac{\dz}{2}}|y_1-y_2|^{\dz}
e^{-c\sqrt{|\lambda|}|x-y_1|}.
$$
This finishes the proof of Lemma \ref{l3.4}.
\end{proof}

\begin{lemma}\label{l3.5}
Let $n\ge3$, $\boz$ be a bounded {\rm NTA} domain of $\rn$, the real-valued, bounded, and measurable matrix $A$
satisfy \eqref{eq1.3}, and $L_D$ be as in \eqref{eq1.4}. Then there exists a constant $\dz_0\in(0,1]$,
depending only on $n$, $A$, and $\boz$, such that, for any given $\dz\in(0,\dz_0)$, there exist constants
$C,\,c\in(0,\fz)$ such that, for any $t\in(0,\fz)$ and $x,\,y_1,\,y_2\in\boz$ with $|y_1-y_2|\le\sqrt{t}/2$,
\begin{equation*}
\lf|K^{L_D}_{t}(x,y_1)-K^{L_D}_{t}(x,y_2)\r|\le\frac{C}{t^{n/2}}
\lf[\frac{|y_1-y_2|}{\sqrt{t}}\r]^{\dz}\exp\lf\{-\frac{|x-y_1|^2}{ct}\r\}.
\end{equation*}
\end{lemma}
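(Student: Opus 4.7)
The plan is to invoke the H\"older-resolvent estimate of Lemma \ref{l3.4} in combination with the functional calculus for $L_D$, following the strategy in \cite{dhmmy13}. Fix $\mu\in(\omega_0,\pi/2)$ and a piecewise smooth contour $\tilde\bgz$ in the resolvent set $\bgz_{\pi-\mu}$ of $-L_D$, going from $\fz e^{-i(\pi-\mu)}$ to $\fz e^{i(\pi-\mu)}$. Integrating by parts $(m-1)$ times in $\lz$, for any integer $m>(n+2)/2$ one has
\begin{equation*}
K^{L_D}_t(x,y)=\frac{(m-1)!}{2\pi i\,t^{m-1}}\int_{\tilde\bgz}e^{t\lz}R_{\lz,m}(x,y)\,d\lz.
\end{equation*}
Fix $y_1,y_2\in\boz$ with $|y_1-y_2|\le\sqrt t/2$ and, without loss of generality, $|x-y_1|\le|x-y_2|$. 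Taking the difference, applying Lemma \ref{l3.4}, and using $|e^{t\lz}|\le e^{-ct|\lz|}$ on $\tilde\bgz$ followed by a $\bgz$-function computation in $|\lz|$ yields the `weak' H\"older estimate
\begin{equation*}
\lf|K^{L_D}_t(x,y_1)-K^{L_D}_t(x,y_2)\r|\le Ct^{-n/2}\lf(\frac{|y_1-y_2|}{\sqrt t}\r)^{\dz},
\end{equation*}
which however lacks the desired Gaussian factor $\exp\{-|x-y_1|^2/(ct)\}$.

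To extract the Gaussian, I bootstrap using the Gaussian bound \eqref{eq1.5} via the semigroup property. When $|x-y_1|\le 2\sqrt t$, the Gaussian factor is bounded below by a constant so that the weak estimate already suffices. When $|x-y_1|>2\sqrt t$, write
\begin{equation*}
K^{L_D}_t(x,y_1)-K^{L_D}_t(x,y_2)=\int_{\boz}K^{L_D}_{t/2}(x,z)\lf[K^{L_D}_{t/2}(z,y_1)-K^{L_D}_{t/2}(z,y_2)\r]\,dz
\end{equation*}
and split the $z$-integral into $\boz_1:=\{z\in\boz:|z-y_1|\le|x-y_1|/2\}$ and $\boz_2:=\boz\setminus\boz_1$. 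On $\boz_1$, the inequality $|x-z|\ge|x-y_1|/2$ combined with \eqref{eq1.5} gives $K^{L_D}_{t/2}(x,z)\ls t^{-n/2}\exp\{-|x-y_1|^2/(ct)\}$; combining this with the weak H\"older bound, the estimate $|\boz_1|\ls|x-y_1|^n$, and the elementary inequality $r^n e^{-r^2/c}\ls e^{-r^2/(2c)}$, one obtains the desired bound on $\boz_1$ with H\"older exponent $\dz$. On $\boz_2$, one has $|z-y_1|\ge|x-y_1|/2$ and $|z-y_2|\ge|x-y_1|/4$ (since $|y_1-y_2|\le\sqrt t/2\le|x-y_1|/4$), so \eqref{eq1.5} yields $|K^{L_D}_{t/2}(z,y_i)|\ls t^{-n/2}\exp\{-|x-y_1|^2/(c't)\}$ for $i\in\{1,2\}$; taking the geometric mean of this bound and the weak H\"older bound produces
\begin{equation*}
\lf|K^{L_D}_{t/2}(z,y_1)-K^{L_D}_{t/2}(z,y_2)\r|\ls t^{-n/2}\lf(\frac{|y_1-y_2|}{\sqrt t}\r)^{\dz/2}\exp\lf\{-\frac{|x-y_1|^2}{c''t}\r\},
\end{equation*}
and integration against $K^{L_D}_{t/2}(x,z)$ with $\int_{\boz}K^{L_D}_{t/2}(x,z)\,dz\le 1$ yields the contribution from $\boz_2$.

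The main obstacle is the Gaussian extraction itself: naively replacing the integrand in the Dunford representation by its absolute value erases the oscillation that produces the Gaussian, and indeed the exponent $ct|\lz|+c'\sqrt{|\lz|}|x-y_1|$ is minimized at $|\lz|=0$ on $[0,\fz)$, so any pointwise estimate of the contour integral yields only polynomial decay in $|x-y_1|$. The semigroup bootstrap circumvents this at the cost of halving the H\"older exponent (from $\dz$ to $\dz/2$); since Lemma \ref{l3.5} asserts only the existence of some positive $\dz_0$, this loss is absorbed by choosing the $\dz_0$ of Lemma \ref{l3.5} to be at most half the one produced by Lemma \ref{l3.4}.
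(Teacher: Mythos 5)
Your argument is essentially correct, but at the decisive step it takes a genuinely different route from the paper. The paper also starts from the representation $K^{L_D}_t(x,y)=\frac{(-1)^m(m-1)!}{2\pi i\,t^{m-1}}\int_{\Gamma_R}e^{\lambda t}R_{\lambda,m}(x,y)\,d\lambda$ together with Lemma \ref{l3.4}, but instead of a contour fixed in advance it takes the inner radius $R$ to depend on $x,y_1,t$, namely $R\ge\max\{t^{-1},|x-y_1|^2t^{-2}\}$; on such a contour $e^{-c\sqrt{|\lambda|}\,|x-y_1|}\le e^{-c|x-y_1|^2/t}$, so the Gaussian factor comes out of the contour integral directly and the H\"older exponent $\delta$ of Lemma \ref{l3.4} is preserved in full. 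Thus the ``obstacle'' you describe is not a genuine one: adapting the contour is exactly the device that defeats it, and your pointwise claim that the contour integral can yield only polynomial decay in $|x-y_1|$ is false once the contour is allowed to depend on $x,y_1,t$. Your replacement — the Chapman--Kolmogorov splitting $K_t(x,\cdot)=\int_\Omega K_{t/2}(x,z)K_{t/2}(z,\cdot)\,dz$, the decomposition into $\{|z-y_1|\le|x-y_1|/2\}$ and its complement, and interpolation of the crude H\"older bound with the Gaussian bound \eqref{eq1.5} — is a correct and somewhat softer alternative; it buys simplicity in the contour estimate at the cost of halving the exponent, which, as you note, is harmless because the lemma (and its later uses) only requires the existence of some $\delta_0\in(0,1]$. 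Two details should be tightened. First, your ``$\Gamma$-function computation'' for the weak H\"older estimate diverges at $|\lambda|=0$, since $-m+\frac n2+\frac\delta2<-1$ when $m>(n+2)/2$; you must keep the contour at distance $\gtrsim 1/t$ from the origin (as in the paper, $R\ge 1/t$), after which the substitution $s=t|\lambda|$ gives the claimed scaling $t^{-n/2}(|y_1-y_2|/\sqrt t)^{\delta}$. Second, in the estimate on the outer region replace $\int_\Omega K_{t/2}(x,z)\,dz\le1$, which tacitly uses positivity of the kernel, by $\int_\Omega|K_{t/2}(x,z)|\,dz\lesssim1$, which follows from \eqref{eq1.5} alone and serves the same purpose.
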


\begin{proof}
For any given $\mu\in(\pi/2,\pi-\omega_0)$ and $R\in(0,\fz)$, define
$$
\Gamma_1:=\{re^{-i\mu}:\ r\ge R\}, \quad \Gamma_2:=\{Re^{-i\phi}:\ |\phi|\le\mu\},
$$
$$
\Gamma_3:=\{re^{i\mu}:\ r\ge R\},\quad\text{and}\quad\Gamma_R:=\Gamma_1\cup\Gamma_2\cup\Gamma_3.
$$
Let $m\in\nn$ and $m\ge\frac{n+3}{2}$. Using the inverse Laplace transform (see, for instance,
\cite[p.\,30, Theorem 7.7]{p83}), we find that, for any given $t\in(0,\fz)$ and any $x,\,y\in\boz$ with $x\neq y$,
\begin{align}\label{eq3.17}
K^{L_D}_t(x,y)=(-1)^{m}\frac{(m-1)!}{2\pi it^{m-1}}\int_{\Gamma_R}e^{\lz t}R_{\lz,\,m}(x,y)\,d\lz,
\end{align}
where $R\in[R(x,y,t),\fz)$ and
\begin{align}\label{eq3.18}
R(x,y,t):=\max\lf\{\frac{1}{t},\frac{|x-y|^2}{t^2}\r\}.
\end{align}
Fix $x,\,y_1,\,y_2\in\boz$ with $x\neq y_i$ for any $i\in\{1,2\}$, a $t\in(0,\fz)$,
and an
$$R\in\lf[\max\lf\{R(x,y_1,t),R(x,y_2,t)\r\},\fz\r),$$
where $R(x,y_1,t)$ and $R(x,y_2,t)$ are as in \eqref{eq3.18}.
Then, by \eqref{eq3.17} and Lemma \ref{l3.4}, we conclude that
\begin{align}\label{eq3.19}
&\lf|K^{L_D}_t(x,y_1)-K^{L_D}_t(x,y_2)\r|\\ \nonumber
&\quad\ls\sum_{i=1}^2\frac{1}{t^{m-1}}|y_1-y_2|^\dz\int_{\Gamma_R}e^{\mathfrak{R}(\lz t)}
|\lambda|^{-m+\frac{n}{2}+\frac{\dz}{2}}e^{-c\sqrt{|\lambda|}|x-y_i|}d|\lz|\\ \nonumber
&\quad=:{\rm E}+{\rm F},
\end{align}
where $\dz\in(0,1)$ is as in Lemma \ref{l3.4}. From the assumption that $R\ge\max\{\frac{1}{t},
\frac{|x-y_1|^2}{t^2}\}$, it follows that there exists a positive constant $\wz{c}$ such that
\begin{align}\label{eq3.20}
&\frac{1}{t^{m-1}}\int_{\Gamma_1\cup\Gamma_2}e^{\mathfrak{R}(\lz t)}|\lambda|^{-m+\frac{n}{2}
+\frac{\dz}{2}}e^{-c\sqrt{|\lambda|}|x-y_1|}d|\lz|\\ \nonumber
&\quad\ls t^{-\frac{n+\dz}{2}}\int_R^\fz e^{-\wz{c}|\lz|t}
(|\lz|t)^{-m+\frac{n}{2}+\frac{\dz}{2}+1}e^{-c\sqrt{|\lambda|}|x-y_1|}\frac{d|\lz|}{|\lz|}\\ \nonumber
&\quad\ls t^{-\frac{n+\dz}{2}}e^{-c\sqrt{R}|x-y_1|}e^{-\frac{\wz{c}}{2}Rt}\int_1^\fz
e^{-\frac{\wz{c}}{2}s}s^{-m+\frac{n}{2}+\frac{\dz}{2}}ds\\ \nonumber
&\quad\ls t^{-\frac{n+\dz}{2}}e^{-c\sqrt{R}|x-y_1|}e^{-\frac{\wz{c}}{2}Rt}\ls
t^{-\frac{n+\dz}{2}}e^{-c_1\frac{|x-y_1|^2}{t}},
\end{align}
where $c$ is as in Lemma \ref{l3.4}, and $c_1:=c+\frac{\wz{c}}{2}$. Moreover, by the assumptions
that $-m+\frac{n}{2}+\frac{\dz}{2}+1<0$ and $R\ge\max\{\frac{1}{t},\frac{|x-y_1|^2}{t^2}\}$ again,
we find that
\begin{align*}
&\frac{1}{t^{m-1}}\int_{\Gamma_3}e^{\mathfrak{R}(\lz t)}|\lambda|^{-m+\frac{n}{2}+\frac{\dz}{2}}
e^{-c\sqrt{|\lambda|}|x-y_1|}d|\lz|\\ \nonumber
&\quad\ls t^{-\frac{n+\dz}{2}}\int_{\{\lz\in\cc:\ |\lz|=R\}}e^{-\wz{c}Rt}
(Rt)^{-m+\frac{n}{2}+\frac{\dz}{2}+1}e^{-c\sqrt{R}|x-y_1|}\frac{d|\lz|}{|\lz|}\\ \nonumber
&\quad\ls t^{-\frac{n+\dz}{2}}(Rt)^{-m+\frac{n}{2}+\frac{\dz}{2}+1}e^{-c_1\frac{|x-y_1|^2}{t}}
\ls t^{-\frac{n+\dz}{2}}e^{-c_1\frac{|x-y_1|^2}{t}},
\end{align*}
which, together with \eqref{eq3.20}, further implies that
\begin{align}\label{eq3.21}
{\rm E}\ls t^{-\frac{n}{2}}e^{-c_1\frac{|x-y_1|^2}{t}}\lf[\frac{|y_1-y_2|}{t^{1/2}}\r]^\dz.
\end{align}
Similarly to the estimation of \eqref{eq3.21}, we also have
\begin{align*}
{\rm F}\ls t^{-\frac{n}{2}}e^{-c_1\frac{|x-y_2|^2}{t}}\lf[\frac{|y_1-y_2|}{t^{1/2}}\r]^\dz.
\end{align*}
From this, \eqref{eq3.21}, and \eqref{eq3.19}, we deduce that, for any $t\in(0,\fz)$
and $x,\,y_1,\,y_2\in\boz$ satisfying $|y_1-y_2|\le \sqrt{t}/2$,
\begin{align*}
\lf|K^{L_D}_t(x,y_1)-K^{L_D}_t(x,y_2)\r|\ls
t^{-\frac{n}{2}}\lf(\max_{i\in\{1,\,2\}}e^{-c_1\frac{|x-y_i|^2}{t}}\r)\lf[\frac{|y_1-y_2|}{t^{1/2}}\r]^\dz
\ls t^{-\frac{n}{2}}e^{-c_1\frac{|x-y_1|^2}{t}}\lf[\frac{|y_1-y_2|}{t^{1/2}}\r]^\dz.
\end{align*}
This finishes the proof of Lemma \ref{l3.5}.
\end{proof}

To show Theorem \ref{t1.1}, we also need the following global regularity
estimate for the Dirichlet problem \eqref{eq1.1}.

\begin{lemma}\label{l3.6}
Let $n\ge2$, $\boz\subset\rn$ be a bounded {\rm NTA} domain, and the real-valued, bounded,
and measurable matrix $A$ satisfy \eqref{eq1.3}. Then there exists a positive constant $p_0\in(2,\fz)$,
depending only on $n$, $A$, and $\boz$, such that, for any given $p\in[2,p_0)$, the Dirichlet
problem \eqref{eq1.1}, with $f\in L^{p_\ast}(\boz)$, is uniquely solvable and, moreover,
for any weak solution $u$ of the problem \eqref{eq1.1}, $u\in W^{1,p}_0(\boz)$ and there
exists a positive constant $C$, depending only on $n$, $p$, and $\boz$, such that
\begin{equation}\label{eq3.22}
\|\nabla u\|_{L^p(\boz;\rn)}\le C\|f\|_{L^{p_\ast}(\boz)}.
\end{equation}
\end{lemma}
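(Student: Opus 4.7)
The plan is to prove \eqref{eq3.22} by the Meyers--Shen scheme: combine the $L^2$ bound from Remark \ref{r1.3} with a local reverse H\"older inequality for gradients of solutions of the homogeneous Dirichlet problem (Lemma \ref{l3.8} below), and then invoke the real-variable theorem of Shen \cite[Theorem 3.4]{sh07} (see also \cite[Theorem 4.2.6]{sh18}). For the endpoint $p=2$, one first verifies that, for any $p\ge 2$, the Sobolev-shifted exponent $p_\ast$ in \eqref{eq1.7} satisfies $p_\ast\ge 2_\ast$, so $L^{p_\ast}(\boz)\hookrightarrow L^{2_\ast}(\boz)$ as $\boz$ is bounded. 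The Sobolev embedding then places $f\in L^{2_\ast}(\boz)$ in the dual of $W^{1,2}_0(\boz)$, and the Lax--Milgram theorem yields a unique $u\in W^{1,2}_0(\boz)$ obeying \eqref{eq1.9}, which is \eqref{eq3.22} at $p=2$.

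For $p\in(2,p_0)$ with $p_0>2$ to be determined, I would set up Shen's scheme via a two-scale decomposition. Fix a ball $B\subset\rn$ with $x_B\in\ol\boz$ and $r_B$ sufficiently small compared with $\diam(\boz)$, and split $f=f\mathbf{1}_{4B\cap\boz}+f\mathbf{1}_{\boz\setminus 4B}=:f_1+f_2$. Let $u_1,u_2\in W^{1,2}_0(\boz)$ be the unique weak solutions of \eqref{eq1.1} with right-hand sides $f_1$ and $f_2$, respectively, so that $u=u_1+u_2$. From the $p=2$ bound just established together with H\"older's inequality,
$$\|\nabla u_1\|_{L^2(\boz;\rn)}\ls\|f_1\|_{L^{2_\ast}(\boz)}\ls|4B|^{1/2_\ast-1/p_\ast}\|f\|_{L^{p_\ast}(4B\cap\boz)},$$
which supplies the scaling input demanded by Shen's theorem. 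Since $f_2\equiv 0$ on $4B\cap\boz$, the function $u_2$ is a weak solution of $-\divz(A\nabla u_2)=0$ in $4B\cap\boz$ with $u_2=0$ on $4B\cap\partial\boz$; this is exactly the situation covered by Lemma \ref{l3.8}, which supplies a $p_0>2$, depending only on $n$, $A$, and $\boz$, such that
$$\lf(\fint_{B\cap\boz}|\nabla u_2|^{p_0}\,dx\r)^{1/p_0}\le C\lf(\fint_{2B\cap\boz}|\nabla u_2|^2\,dx\r)^{1/2}.$$

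With the $L^2$ boundedness of the map $f\mapsto|\nabla u|$ and the local reverse H\"older property of its ``far part'' $|\nabla u_2|$ both in hand, the hypotheses of Shen's real-variable theorem \cite[Theorem 3.4]{sh07} are satisfied, and its conclusion is precisely the global bound \eqref{eq3.22} for every $p\in[2,p_0)$ with a constant $C$ depending only on $n$, $p$, and $\boz$. Uniqueness of the $W^{1,p}_0(\boz)$-solution follows from the inclusion $W^{1,p}_0(\boz)\subset W^{1,2}_0(\boz)$ (since $\boz$ is bounded and $p\ge 2$) together with the $p=2$ uniqueness delivered by Lax--Milgram.

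The genuine obstacle in this program is not Lemma \ref{l3.6} itself but the underlying reverse H\"older estimate of Lemma \ref{l3.8}, which has to accommodate genuinely non-smooth boundaries. There the essential tool is the exterior corkscrew condition of NTA domains (Lemma \ref{l2.1}(iii)): the quantitative density lower bound $|B\cap\boz^\complement|/|B|\gs 1$ for balls centered at $\partial\boz$ allows the zero-extension of $u_2$ to enjoy a Poincar\'e-type inequality on boundary balls, which, combined with a standard boundary Caccioppoli inequality, yields a weak reverse H\"older inequality with exponent $2_\ast<2$ on the right. A Gehring-type self-improvement then elevates the integrability exponent to some $p_0>2$. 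Granting Lemma \ref{l3.8}, Lemma \ref{l3.6} is a clean, if technical, application of Shen's real-variable machinery.
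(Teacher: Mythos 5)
Your overall architecture (Lemma \ref{l3.8} plus Shen's real-variable argument, with the $p=2$ case from Remark \ref{r1.3} and uniqueness via $W^{1,p}_0(\boz)\subset W^{1,2}_0(\boz)$) is in the right family, but the final step has a genuine gap: Shen's theorem \cite[Theorem 3.4]{sh07} (equivalently \cite[Theorem 4.2.6]{sh18}), applied to $F:=|\nabla u|$ with a data function $f$, yields a conclusion with \emph{matching} exponents, namely $\|\nabla u\|_{L^p(\boz;\rn)}\ls\|\nabla u\|_{L^2(\boz;\rn)}+\|f\|_{L^p(\boz)}$, whereas \eqref{eq3.22} requires the Sobolev-shifted bound $\|\nabla u\|_{L^p(\boz;\rn)}\ls\|f\|_{L^{p_\ast}(\boz)}$ under the sole assumption $f\in L^{p_\ast}(\boz)$ (so $\|f\|_{L^p(\boz)}$ need not even be finite). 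Your own verification of the ``good part'' hypothesis makes the problem visible: the estimate $\|\nabla u_1\|_{L^2(\boz;\rn)}\ls|4B|^{1/2_\ast-1/p_\ast}\|f\|_{L^{p_\ast}(4B\cap\boz)}$ amounts to $\big(\fint_{2B\cap\boz}|F_B|^2\,dx\big)^{1/2}\ls r_B\big(\fint_{4B\cap\boz}|f|^{p_\ast}\,dx\big)^{1/p_\ast}$, and the factor $r_B$ carrying the gain of one derivative does not fit the hypothesis $\sup_{B'\supset B}\big(\fint_{B'\cap\boz}|f|^2\,dx\big)^{1/2}$ of the cited theorem. If you discard $r_B$ (and use $p_\ast\le 2$) you land back at the same-exponent conclusion; if you keep it, you need a modified good-$\lambda$/Calder\'on--Zygmund argument phrased in terms of a fractional maximal function of $|f|^{p_\ast}$ together with its $L^{p_\ast}\to L^p$ boundedness, which is a nontrivial variant you have not supplied, not ``precisely'' the conclusion of \cite[Theorem 3.4]{sh07}.

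The paper avoids this by inserting an intermediate step: it first proves Lemma \ref{l3.7}, the estimate $\|\nabla u\|_{L^p(\boz;\rn)}\ls\|\mathbf{f}\|_{L^p(\boz;\rn)}$ for the problem with divergence-form data $\dive(\mathbf{f})$, where Shen's scheme with Lemma \ref{l3.8} applies with matching exponents (this is where your decomposition really belongs), and then deduces Lemma \ref{l3.6} by duality: for $\mathbf{g}\in L^{p'}(\boz;\rn)$ one solves the adjoint problem \eqref{eq3.25}, writes $\int_\boz\nabla u\cdot\mathbf{g}\,dx=-\int_\boz f\,v\,dx$, and uses the Sobolev embedding on NTA domains, $\|v\|_{L^{(p_\ast)'}(\boz)}\ls\|\nabla v\|_{L^{p'}(\boz;\rn)}\ls\|\mathbf{g}\|_{L^{p'}(\boz;\rn)}$, to convert the $L^{p'}$ bound for $\nabla v$ into the $L^{p_\ast}$--$L^p$ bound for $u$. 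To repair your proposal you should either adopt this duality step (or, alternatively, write $f=\dive\mathbf{F}$ with $\|\mathbf{F}\|_{L^p}\ls\|f\|_{L^{p_\ast}}$ and invoke Lemma \ref{l3.7}), or else state and prove the fractional-maximal-function version of the real-variable theorem you are implicitly using; as written, the passage from your local estimates to \eqref{eq3.22} is not justified.
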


To prove Lemma \ref{l3.6}, we need the global regularity estimate for the
following Dirichlet problem \eqref{eq3.23}.

Let $p\in(1,\fz)$ and $\mathbf{f}\in L^p(\boz;\rn)$. Then a function $u$
is called a \emph{weak solution} of the following \emph{Dirichlet boundary value problem}
\begin{equation}\label{eq3.23}
\begin{cases}
-\dive (A\nabla u)=\dive (\mathbf{f})\ \ & \text{in}\ \ \boz,\\
u=0 \ \ & \text{on}\ \ \partial\boz
\end{cases}
\end{equation}
if $u\in W^{1,p}_{0}(\boz)$ and, for any $\varphi\in C^\fz_\mathrm{c}(\boz)$,
\begin{equation}\label{eq3.24}
\int_{\boz}A(x)\nabla u(x)\cdot\nabla\varphi(x)\,dx=-\int_\boz\mathbf{f}(x)\cdot\nabla\varphi(x)\,dx.
\end{equation}
Moreover, the Dirichlet problem \eqref{eq3.23} is said to be \emph{uniquely solvable} if,
for any $\mathbf{f}\in L^p(\boz;\rn)$, there exists a unique $u\in W^{1,p}_0(\boz)$
such that \eqref{eq3.24} holds true for any $\varphi\in C^\fz_\mathrm{c}(\boz)$.

By the Lax--Milgram theorem, we conclude that, when $p=2$, the Dirichlet problem \eqref{eq3.23},
with $\mathbf{f}\in L^2(\boz;\rn)$, is uniquely solvable and the weak solution $u$ satisfies
\begin{equation*}
\|\nabla u\|_{L^2(\boz;\rn)}\le \mu_0^{-1}\|\mathbf{f}\|_{L^{2}(\boz;\rn)},
\end{equation*}
where $\mu_0$ is as in \eqref{eq1.3}.

\begin{lemma}\label{l3.7}
Let $n\ge2$, $\boz\subset\rn$ be a bounded {\rm NTA} domain, and the real-valued, bounded, and measurable
matrix $A$ satisfy \eqref{eq1.3}. Then there exists a positive constant $p_0\in(2,\fz)$,
depending only on $n$, $A$, and $\boz$, such that, for any given $p\in(p_0',p_0)$ with $1/p_0'+1/p_0=1$,
the Dirichlet problem \eqref{eq3.23}, with $\mathbf{f}\in L^{p}(\boz;\rn)$, is uniquely solvable and, moreover,
for any weak solution $u$ of the problem \eqref{eq3.23}, $u\in W^{1,p}_0(\boz)$ and there
exists a positive constant $C$, depending only on $n$, $p$, and $\boz$, such that
\begin{equation*}
\|\nabla u\|_{L^p(\boz;\rn)}\le C\|\mathbf{f}\|_{L^{p}(\boz;\rn)}.
\end{equation*}
\end{lemma}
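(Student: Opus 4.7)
The plan is to adapt the classical Meyers perturbation around $p=2$ to bounded NTA domains and then use duality to cover $p<2$. The $L^2$ case is already in hand from Lax--Milgram, so the task is to produce some exponent $p_0>2$ such that the estimate persists for all $p\in[2,p_0)$, and then pass to $(p_0',2]$ by duality.

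First, I would establish a local reverse H\"older inequality for $\nabla u$. For an interior ball $B=B(x_B,r_B)\subset\boz$ with $4B\subset\boz$, a standard Caccioppoli estimate, combined with the $(2_\ast,2)$-Poincar\'e inequality where $2_\ast:=2n/(n+2)$, gives
\begin{equation*}
\lf(\fint_{B}|\nabla u|^2\,dx\r)^{1/2}\ls\lf(\fint_{2B}|\nabla u|^{2_\ast}\,dx\r)^{1/2_\ast}+\lf(\fint_{2B}|\mathbf{f}|^2\,dx\r)^{1/2}.
\end{equation*}
For a boundary ball $B$ with $x_B\in\partial\boz$ and $r_B\in(0,\diam(\boz))$, I would extend $u$ by zero to $\rn$ (legitimate since $u\in W^{1,2}_0(\boz)$) and exploit Lemma \ref{l2.1}(iii), which furnishes $|B\cap\boz^\complement|\gs|B|$; this density condition yields the zero-boundary Poincar\'e-Sobolev inequality on $B$ and, coupled with Caccioppoli, produces the same reverse H\"older inequality on $B\cap\boz$ without any graph structure of $\partial\boz$.

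Second, by Remark \ref{r2.1}, $(\boz,|\cdot|,dx)$ is a space of homogeneous type, so Gehring's lemma applies to $|\nabla u|^{2_\ast}$, self-improving the inequality above to some exponent $p_1>2$: there exist $p_0\in(2,\fz)$ and $C\in(0,\fz)$ depending only on $n$, $A$, and $\boz$ such that, for any $p_1\in[2,p_0)$ and any admissible ball $B$,
\begin{equation*}
\lf(\fint_{B\cap\boz}|\nabla u|^{p_1}\,dx\r)^{1/p_1}\le C\lf\{\lf(\fint_{2B\cap\boz}|\nabla u|^2\,dx\r)^{1/2}+\lf(\fint_{2B\cap\boz}|\mathbf{f}|^{p_1}\,dx\r)^{1/p_1}\r\}.
\end{equation*}
Integrating this bound over a finite covering of $\boz$ and using the a priori $L^2$ bound $\|\nabla u\|_{L^2(\boz;\rn)}\le\mu_0^{-1}\|\mathbf{f}\|_{L^2(\boz;\rn)}$ yields the global estimate $\|\nabla u\|_{L^p(\boz;\rn)}\ls\|\mathbf{f}\|_{L^p(\boz;\rn)}$ for all $p\in[2,p_0)$; unique solvability in $W^{1,p}_0(\boz)$ then follows by a standard continuity-in-$p$ argument starting from the $p=2$ case.

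Finally, for $p\in(p_0',2)$, I would use duality: since the transpose matrix $A^T$ satisfies \eqref{eq1.3} with the same constant $\mu_0$, the preceding argument produces the same $L^q$ estimate, $q\in[2,p_0)$, for the Dirichlet problem governed by $A^T$. Pairing weak solutions of the original problem with those of the $A^T$-adjoint problem, via test functions and the weak formulation \eqref{eq3.24}, transfers this into $\|\nabla u\|_{L^{q'}(\boz;\rn)}\ls\|\mathbf{f}\|_{L^{q'}(\boz;\rn)}$ for $q'\in(p_0',2]$, which combines with the previous range to give the assertion on $(p_0',p_0)$. The main obstacle is the boundary reverse H\"older inequality on an NTA domain, where no Lipschitz graph representation is available; Lemma \ref{l2.1}(iii) is precisely the measure-theoretic substitute needed so that the zero-extension Poincar\'e-Sobolev inequality holds and the Meyers scheme runs to completion without any additional regularity of $\partial\boz$.
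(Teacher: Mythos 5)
Your argument is correct, but it follows a genuinely different route from the paper. The paper proves Lemma \ref{l3.7} by quoting a weak reverse H\"older inequality \emph{for weak solutions of the homogeneous equation} with zero boundary data on surface balls (Lemma \ref{l3.8}, taken from Li--Pipher) and then feeding it into Shen's real-variable argument for $L^p$ estimates \cite[Theorem 3.4]{sh07}, repeating the scheme of \cite[Theorem 1.10]{yyy20}; duality then covers $(p_0',2)$. You instead run the classical Meyers--Gehring self-improvement directly on the inhomogeneous equation: interior Caccioppoli plus Sobolev--Poincar\'e, boundary Caccioppoli plus the zero-extension Sobolev--Poincar\'e inequality justified by the exterior measure density of Lemma \ref{l2.1}(iii), Gehring's lemma on the space of homogeneous type $(\boz,|\cdot|,dx)$, a finite covering together with the Lax--Milgram $L^2$ bound, and finally the same duality step. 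Both arguments prove the statement as written, and your use of the NTA corkscrew/measure density as the substitute for a Lipschitz graph at the boundary is exactly the right mechanism. The trade-off is quantitative: the Meyers route only produces a small, unquantified exponent $2+\epsilon$ governed by the ellipticity ratio $\mu_0$, whereas the paper's route yields whatever exponent the homogeneous reverse H\"older inequality of Lemma \ref{l3.8} provides, which can be substantially larger; this matters downstream, since Lemma \ref{l3.6} and Theorem \ref{t1.3} take $p_0$ to be precisely the exponent from Lemma \ref{l3.8} (and the sharpness discussion in Remark \ref{r1.4} is phrased for that exponent), so your proof would have to be paired with the paper's choice of $p_0$ if one wanted the later results in their stated form. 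Two small points you gloss over but which are standard: existence and uniqueness in $W^{1,p}_0(\boz)$ for $p\in(p_0',2)$ should be obtained from your a priori duality estimate by approximating $\mathbf{f}$ in $L^p$ by $L^2\cap L^p$ data, and in the duality pairing one needs the density of $C^\fz_{\rm c}(\boz)$ in $W^{1,p}_0(\boz)$ and $W^{1,p'}_0(\boz)$ to legitimize testing each problem with the solution of the other.
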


Via replacing \cite[Lemma 4.3]{yyy20} by the following Lemma \ref{l3.8}, using a real-variable argument
for $L^p(\boz)$ estimates, which was essentially established by Shen \cite[Theorem 3.4]{sh07}
(see also \cite[Theorem 4.2.6]{sh18} and \cite[Theorem 3.3]{sh05a}), and then
repeating the proof of \cite[Theorem 1.10]{yyy20}, we can prove Lemma \ref{l3.7};
we omit the details here.

\begin{lemma}\label{l3.8}
Let $n\ge2$, $\boz\subset\rn$ be a bounded $\mathrm{NTA}$ domain, and $B(x_0,r)$ a ball such
that $r\in(0,r_0/4)$ and either $x_0\in\partial\boz$ or $B(x_0,2r)\subset\boz$, where
$r_0\in(0,\diam(\boz))$ is a constant. Assume that the real-valued, bounded,
and measurable matrix $A$ satisfies \eqref{eq1.3}, and $u\in W^{1,2}(B_\boz(x_0,2r))$ is a weak
solution of the following Dirichlet problem
\begin{equation*}
\begin{cases}
\dive (A\nabla u)=0\ \ & \text{in}\ \ B_\boz(x_0,2r),\\
u=0 \ \ & \text{on}\ \ B(x_0,2r)\cap\partial\boz.
\end{cases}
\end{equation*}
Then there exists a constant $p_0\in(2,\fz)$, depending on $\boz$, $n$, and $\mu_0$ in \eqref{eq1.3},
such that
$$\lf[\fint_{B_\boz(x_0,r)}|\nabla u(x)|^{p_0}\,dx\r]^{1/p_0}\le C
\lf[\fint_{B_\boz(x_0,2r)}|\nabla u(x)|^2\,dx\r]^{1/2},
$$
where $C$ is a positive constant depending only on $n$, $\boz$, and $p_0$.
\end{lemma}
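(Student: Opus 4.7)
The plan is to establish this Meyers-type reverse H\"older inequality via the classical three-step strategy: a Caccioppoli estimate, a Sobolev--Poincar\'e inequality adapted to the NTA geometry, and a Gehring self-improvement argument. The proof splits naturally into two cases according to whether $B(x_0,2r)\subset\boz$ (interior case) or $x_0\in\partial\boz$ (boundary case), and the nontrivial work is in the boundary case.

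For the Caccioppoli step, I would work at an arbitrary scale: fix $y\in\overline{B_\boz(x_0,r)}$ and $\rho\in(0,\fz)$ with $B(y,2\rho)\subset B(x_0,2r)$. Let $\eta\in C_{\mathrm c}^\fz(B(y,2\rho))$ be a standard cutoff with $\eta\equiv 1$ on $B(y,\rho)$ and $|\nabla\eta|\ls 1/\rho$, and choose $c:=\fint_{B_\boz(y,2\rho)}u(z)\,dz$ in the interior case and $c:=0$ in the boundary case. Since $u=0$ on $B(x_0,2r)\cap\partial\boz$, in both cases $\eta^2(u-c)$ is an admissible test function. Testing \eqref{eq3.24} against it and using the ellipticity \eqref{eq1.3} together with Young's inequality yields the Caccioppoli bound
\begin{equation*}
\int_{B_\boz(y,\rho)}|\nabla u(z)|^2\,dz\le\frac{C}{\rho^2}\int_{B_\boz(y,2\rho)}|u(z)-c|^2\,dz.
\end{equation*}

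For the Sobolev--Poincar\'e step, the interior case is the classical inequality on the Euclidean ball $B(y,2\rho)\subset\boz$. In the boundary case (when $y\in\partial\boz$ or $B(y,2\rho)\cap\partial\boz\neq\emptyset$) I would extend $u$ by zero outside $\boz$; this extension belongs to $W^{1,2}(B(y,2\rho))$ because $u$ vanishes on $B(x_0,2r)\cap\partial\boz$. By Lemma \ref{l2.1}(iii) and the corkscrew condition of $\boz$, for any such boundary ball the measure density $|B(y,2\rho)\cap\boz^\complement|\gs|B(y,2\rho)|$ holds, so Poincar\'e's inequality for functions vanishing on a set of positive measure applies on $B(y,2\rho)$. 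Hence, with $q:=\frac{2n}{n+2}$, I obtain in both cases
\begin{equation*}
\lf[\fint_{B_\boz(y,2\rho)}|u(z)-c|^2\,dz\r]^{1/2}\le C\rho\lf[\fint_{B_\boz(y,2\rho)}|\nabla u(z)|^q\,dz\r]^{1/q}.
\end{equation*}
Combining the Caccioppoli bound with this Sobolev--Poincar\'e bound yields the weak reverse H\"older inequality
\begin{equation*}
\lf[\fint_{B_\boz(y,\rho)}|\nabla u(z)|^2\,dz\r]^{1/2}\le C\lf[\fint_{B_\boz(y,2\rho)}|\nabla u(z)|^q\,dz\r]^{1/q},
\end{equation*}
valid uniformly for all admissible sub-balls of $B_\boz(x_0,2r)$.

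With this weak reverse H\"older inequality in hand on a space of homogeneous type (see Remark \ref{r2.1}), I would invoke a Gehring-type self-improvement lemma in its version for homogeneous spaces. This produces a constant $p_0\in(2,\fz)$, depending only on $n$, $\boz$, and $\mu_0$, together with a constant $C$ such that the desired bound
\begin{equation*}
\lf[\fint_{B_\boz(x_0,r)}|\nabla u(z)|^{p_0}\,dz\r]^{1/p_0}\le C\lf[\fint_{B_\boz(x_0,2r)}|\nabla u(z)|^2\,dz\r]^{1/2}
\end{equation*}
holds. The main obstacle is the boundary case: one must ensure that both the Caccioppoli estimate and the Sobolev--Poincar\'e inequality survive when $B_\boz(y,2\rho)$ is only a truncated ball with a potentially irregular boundary piece. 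The decisive ingredients are the admissibility of $\eta^2 u$ as a test function (which uses only the zero boundary trace of $u$) and the measure-density estimate in Lemma \ref{l2.1}(iii) (which is precisely where the NTA hypothesis, via the exterior corkscrew condition, replaces any Lipschitz or flatness assumption on $\partial\boz$).
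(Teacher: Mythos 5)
The paper itself gives no proof of Lemma \ref{l3.8}: it is quoted from \cite[Lemma 3.2 and Corollary 4.1]{lp19}, and the argument behind that citation is of the same Caccioppoli--Sobolev--Poincar\'e--self-improvement type that you outline, so your overall strategy is the expected one and the interior case, the boundary Caccioppoli estimate with $c=0$ (admissible because $\eta^2u$ lies in the closure of admissible test functions thanks to the zero boundary trace), and the use of Lemma \ref{l2.1} are all in order.

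There is, however, one step that fails as written. You declare every sub-ball with $B(y,2\rho)\cap\partial\boz\neq\emptyset$ a ``boundary ball'' and claim $|B(y,2\rho)\cap\boz^\complement|\gs|B(y,2\rho)|$ from Lemma \ref{l2.1}(iii); but that lemma (and the exterior corkscrew condition) gives complement density only for balls centered at $\partial\boz$. A ball centered at an interior point that merely clips the boundary near its edge, say with $\rho\le\dist(y,\partial\boz)<2\rho$, can have $|B(y,2\rho)\cap\boz^\complement|$ an arbitrarily small fraction of $|B(y,2\rho)|$, so the Poincar\'e inequality for functions vanishing on a set of proportional measure has no uniform constant there; and in the same regime you cannot switch to $c=\fint_{B_\boz(y,2\rho)}u$ either, because $\eta^2(u-c)$ no longer vanishes on $B(y,2\rho)\cap\partial\boz$ and is not an admissible test function. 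Since the Gehring/self-improvement step does require control on such intermediate balls, this is a genuine gap in the write-up, though the repair is standard: prove the weak reverse H\"older inequality only in the two model cases, namely (a) interior balls with, say, $B(y,4\rho)\subset\boz$ (mean subtracted) and (b) balls centered at $\partial\boz$ (with $c=0$, where Lemma \ref{l2.1}(iii) does apply), and then treat an arbitrary sub-ball near the boundary by re-centering at the nearest boundary point and enlarging the radius by a fixed factor, comparing averages via the measure density of Lemma \ref{l2.1}(i). This produces a weak reverse H\"older inequality with a larger but fixed dilation constant, which the Gehring-type lemmas on spaces of homogeneous type (or a version requiring only interior and boundary-centered balls) accommodate; with that adjustment your proof is complete.
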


Lemma \ref{l3.8} was established in \cite[Lemma 3.2 and Corollary 4.1]{lp19}.

Now, we show Lemma \ref{l3.6} by using Lemma \ref{l3.7}.

\begin{proof}[Proof of Lemma \ref{l3.6}]
Let $p_0\in(2,\fz)$ be as in Lemma \ref{l3.8}, $p\in[2,p_0)$, and $f\in L^{p_\ast}(\boz)$.
Consider the Dirichlet problem
\begin{equation}\label{eq3.25}
\begin{cases}
-\dive (A^\ast\nabla v)=\dive (\mathbf{g})\ \ & \text{in}\ \ \boz,\\
v=0 \ \ & \text{on}\ \ \partial\boz
\end{cases}
\end{equation}
with $\mathbf{g}\in L^{p'}(\boz;\rn)$, where $\frac1p+\frac1{p'}=1$. Then, by Lemma \ref{l3.7} and
$\mathbf{g}\in L^{p'}(\boz;\rn)$ with $p'\in(p_0',p_0)$, we conclude that the Dirichlet problem
\eqref{eq3.25} is uniquely solvable and the weak solution $v$ satisfies
\begin{align}\label{eq3.26}
\|\nabla v\|_{L^{p'}(\boz;\rn)}\ls\|\mathbf{g}\|_{L^{p'}(\boz;\rn)}.
\end{align}
Moreover, from the assumptions that $p\ge2$ and $f\in L^{p_\ast}(\boz)$,
and Remark \ref{r1.3}, we deduce that there exists a weak solution $u\in W^{1,2}_0(\boz)$
for the Dirichlet problem \eqref{eq1.1} with $f\in L^{p_\ast}(\boz)$. Moreover, we have
\begin{align*}
\int_\boz \nabla u(x)\cdot \mathbf{g}(x)\,dx&=-\int_\boz A^\ast(x)\nabla v(x)\cdot\nabla u(x)\,dx\\
&=-\int_\boz A(x)\nabla u(x)\cdot\nabla v(x)\,dx=-\int_\boz f(x)v(x)\,dx,
\end{align*}
which, together with the H\"older inequality, the Sobolev embedding theorem
on NTA domains (see, for instance, \cite[Theorem 1.1]{bk95}), and \eqref{eq3.26},
further implies that
\begin{align*}
\|\nabla u\|_{L^{p}(\boz;\rn)}&=\sup_{\|\mathbf{g}\|_{L^{p'}(\boz;\rn)}\le 1}
\lf|\int_\boz \nabla u(x)\cdot\mathbf{g}(x)\,dx\r|=\sup_{\|\mathbf{g}\|_{L^{p'}(\boz;\rn)}\le 1}
\lf|\int_\boz f(x) v(x)\,dx\r|\\
&\le\sup_{\|\mathbf{g}\|_{L^{p'}(\boz;\rn)}\le 1}
\|f\|_{L^{p_\ast}(\boz)}\|v\|_{L^{(p_\ast)'}(\boz)}\\
&\ls\sup_{\|\mathbf{g}\|_{L^{p'}(\boz;\rn)}\le 1}
\|f\|_{L^{p_\ast}(\boz)}\|\nabla v\|_{L^{p'}(\boz;\rn)}\\
&\ls\sup_{\|\mathbf{g}\|_{L^{p'}(\boz;\rn)}\le 1}
\|f\|_{L^{p_\ast}(\boz)}\|\mathbf{g}\|_{L^{p'}(\boz;\rn)}\ls\|f\|_{L^{p_\ast}(\boz)}.
\end{align*}
By this and Remark \ref{r1.3}, we find that the Dirichlet problem \eqref{eq1.1}
with $f\in L^{p_\ast}(\boz)$ is uniquely solvable and \eqref{eq3.22} holds true.
This finishes the proof of Lemma \ref{l3.6}.
\end{proof}

Furthermore, to prove Theorem \ref{t1.1}, we also need the following global
gradient estimate for the problem \eqref{eq1.1} on $(\gz,\sz,R)$ quasi-convex domains.

\begin{lemma}\label{l3.9}
Let $n\ge2$, $\boz\subset\rn$ be a bounded {\rm NTA} domain, and
the real-valued, bounded, and measurable matrix $A$ satisfy \eqref{eq1.3}.
Assume that $p\in(2,\fz)$. Then there exists a positive constant $\gz_0\in(0,1)$,
depending only on $n$, $p$, and $\boz$, such that, if $A$ satisfies the $(\gz,R)\mbox{-}\bbmo$ condition
and $\boz$ is a $(\gz,\sz,R)$ quasi-convex domain for some $\gz\in(0,\gz_0)$,
$\sigma\in(0,1)$, and $R\in(0,\fz)$, then the Dirichlet problem \eqref{eq1.1}, with $f\in L^{p_\ast}(\boz)$,
is uniquely solvable and, moreover, for any weak solution $u$ of the problem \eqref{eq1.1},
$u\in W^{1,p}_0(\boz)$ and there exists a positive constant $C$, depending only on $n$, $p$,
and $\boz$, such that
\begin{equation*}
\|\nabla u\|_{L^p(\boz;\rn)}\le C\|f\|_{L^{p_\ast}(\boz)}.
\end{equation*}
\end{lemma}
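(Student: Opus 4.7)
The plan is to mirror the duality argument used in the proof of Lemma \ref{l3.6}, but to replace Lemma \ref{l3.7} (which only handles $p$ in a narrow window $(p_0',p_0)$ around $2$) with a stronger $W^{1,p}$ estimate for the divergence-data Dirichlet problem that holds for all $p\in(1,\fz)$, provided the coefficients satisfy a sufficiently small $(\gz,R)$-$\bbmo$ condition and the domain is $(\gz,\sz,R)$ quasi-convex. Concretely, for a fixed $p\in(2,\fz)$ and $f\in L^{p_\ast}(\boz)$, first produce a weak solution $u\in W^{1,2}_0(\boz)$ via Remark \ref{r1.3}, then test $\nabla u$ against arbitrary $\mathbf{g}\in L^{p'}(\boz;\rn)$ by introducing the adjoint divergence-data problem
\begin{equation*}
\begin{cases}
-\dive(A^{\ast}\nabla v)=\dive(\mathbf{g})\ \ &\text{in}\ \ \boz,\\
v=0 \ \ &\text{on}\ \ \partial\boz.
\end{cases}
\end{equation*}

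The key input will be the following dual estimate: there exists $\gz_0\in(0,1)$, depending only on $n$, $p$, and $\boz$, such that, if $A^{\ast}$ satisfies the $(\gz,R)\mbox{-}\bbmo$ condition and $\boz$ is a $(\gz,\sz,R)$ quasi-convex domain for some $\gz\in(0,\gz_0)$, then the adjoint problem is uniquely solvable in $W^{1,p'}_0(\boz)$ and
$$\|\nabla v\|_{L^{p'}(\boz;\rn)}\ls\|\mathbf{g}\|_{L^{p'}(\boz;\rn)}.$$
Since $A^{\ast}$ inherits both the uniform ellipticity condition \eqref{eq1.3} and the $(\gz,R)\mbox{-}\bbmo$ condition from $A$, this estimate follows from the known global $W^{1,p}$ regularity on quasi-convex domains under small-$\bbmo$ assumptions, which is established via a Calder\'on--Zygmund-type decomposition together with boundary comparison arguments on convex model problems (essentially the Jia--Li--Wang framework, implemented in \cite{jlw10} and refined in \cite{yyy20}). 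Using $v$ (approximated by $C^{\fz}_{\mathrm{c}}(\boz)$ functions) as a test function in the weak formulation of \eqref{eq1.1}, one obtains
\begin{align*}
\int_{\boz}\nabla u(x)\cdot\mathbf{g}(x)\,dx
=-\int_{\boz}A^{\ast}(x)\nabla v(x)\cdot\nabla u(x)\,dx
=-\int_{\boz}A(x)\nabla u(x)\cdot\nabla v(x)\,dx
=-\int_{\boz}f(x)v(x)\,dx,
\end{align*}
and then, by the H\"older inequality, the Sobolev--Poincar\'e embedding on NTA domains (see \cite[Theorem 1.1]{bk95}) applied to $v\in W^{1,p'}_0(\boz)$, and the dual estimate,
\begin{align*}
\lf|\int_{\boz}\nabla u(x)\cdot\mathbf{g}(x)\,dx\r|
\le\|f\|_{L^{p_\ast}(\boz)}\|v\|_{L^{(p_\ast)'}(\boz)}
\ls\|f\|_{L^{p_\ast}(\boz)}\|\nabla v\|_{L^{p'}(\boz;\rn)}
\ls\|f\|_{L^{p_\ast}(\boz)}\|\mathbf{g}\|_{L^{p'}(\boz;\rn)}.
\end{align*}
Taking the supremum over all $\mathbf{g}\in L^{p'}(\boz;\rn)$ with $\|\mathbf{g}\|_{L^{p'}(\boz;\rn)}\le1$ yields the desired bound on $\|\nabla u\|_{L^{p}(\boz;\rn)}$, which, combined with the $L^2$-uniqueness from Remark \ref{r1.3} and the density of smooth approximations, also delivers unique solvability and $u\in W^{1,p}_0(\boz)$.

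The main obstacle is securing the dual $W^{1,p'}$ estimate on a genuinely $(\gz,\sz,R)$ quasi-convex (as opposed to Lipschitz or Reifenberg flat) domain; this is where the smallness of $\gz$ enters and where the threshold $\gz_0$ is determined. The strategy is to reduce to the convex model problem by exploiting condition (b) of Definition \ref{d2.2}: at each boundary ball of radius at most $R$, $\boz$ lies within Hausdorff distance $\gz r$ of a convex domain $V$, and one can transfer the flat-boundary $W^{1,p}$ estimate on $V$ (where gradient regularity is available for every $p\in(1,\fz)$ with small $\bbmo$ coefficients) back to $\boz\cap B(x,r)$ at the cost of a perturbation controlled by $\gz$. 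The resulting local boundary Lipschitz/Meyers-type reverse H\"older estimates, when combined with Shen's real-variable argument (\cite[Theorem 3.4]{sh07}; see also \cite[Theorem 4.2.6]{sh18}), upgrade the $L^2$ gradient bound to an $L^{p'}$ bound, exactly as in the proof of \cite[Theorem 1.10]{yyy20}. Once this ingredient is in place, the remainder of the argument above is routine, and $\gz_0$ may be chosen as the minimum of the thresholds produced by this real-variable machinery for the single exponent $p'$.
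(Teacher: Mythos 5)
Your proposal is correct and takes essentially the same route as the paper: the paper proves Lemma \ref{l3.9} by repeating the duality argument from the proof of Lemma \ref{l3.6}, with the reverse H\"older inequality of Lemma \ref{l3.8} replaced by Lemma \ref{l3.10} (the quasi-convex, small-$\bbmo$ estimate from \cite{yyy20}), so that Shen's real-variable argument yields the divergence-data $W^{1,p}$ bound at the exponent needed in the duality step, exactly as you describe.
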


\begin{lemma}\label{l3.10}
Let $n\ge2$, $\boz\subset\rn$ be a bounded {\rm NTA} domain, and $r_0\in(0,\diam(\boz))$. Assume
that the matrix $A$ is as in Lemma \ref{l3.9}. Let $v\in W^{1,2}(B_\boz(x_0,4r))$ be a weak
solution of the equation $\dive (A\nabla v)=0$ in $B_\boz(x_0,4r)$ satisfying $v=0$ on
$B(x_0,4r)\cap\partial\boz$, where $r\in(0,r_0/4)$ and either $x_0\in\partial\boz$ or $B(x_0,4r)\subset\boz$.
Then, for any given $p\in(2,\fz)$, there exists a constant $\gz_0\in(0,1)$, depending only on
$n,\,p$, and $\boz$, such that, if $A$ satisfies the $(\gz,R)\mbox{-}\bbmo$ condition and $\boz$
is a $(\gz,\sz,R)$ quasi-convex domain for some $\gz\in(0,\gz_0)$, $\sigma\in(0,1)$, and
$R\in(0,\fz)$, then the weak reverse H\"older inequality
\begin{equation*}
\lf[\fint_{B_\boz(x_0,2r)}|\nabla v(x)|^p\,dx\r]^{\frac1p}\le C\lf[\fint_{B_\boz(x_0,4r)}
|\nabla v(x)|^2\,dx\r]^{\frac12}
\end{equation*}
holds true, where $C$ is a positive constant independent of $v$, $x_0$, and $r$.
\end{lemma}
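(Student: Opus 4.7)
The plan is to obtain Lemma \ref{l3.10} by the standard perturbation/comparison scheme that has become canonical in the Byun--Wang--Dong--Kim circle of ideas (see \cite{bw04, dk10}), adapted to the present setting of $(\gamma,\sigma,R)$ quasi-convex domains. There are two cases to handle: the interior case $B(x_0,4r) \subset \boz$ and the boundary case $x_0 \in \partial\boz$. The interior case follows from a classical perturbation argument: freezing the coefficients at their average $\overline{A}$ on $B(x_0,4r)$, solving $\divz(\overline{A}\nabla w)=0$ on $B(x_0,4r)$ with $w=v$ on $\partial B(x_0,4r)$, using the classical interior $W^{1,\fz}$ estimate for constant-coefficient equations, and controlling $\nabla(v-w)$ in $L^2$ by the smallness of $\|A-\overline{A}\|$ (which is governed by the $(\gz,R)$-$\bbmo$ assumption) via Caccioppoli and the uniform ellipticity. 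This yields the reverse H\"older inequality with any $p\in(2,\fz)$ provided $\gz$ is sufficiently small depending on $p$.

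The boundary case $x_0\in\partial\boz$ is where the real content lies. Using the $(\gz,\sz,R)$ quasi-convexity at scale $4r$, I would select a convex domain $V=V(x_0,4r)$ with $[B(x_0,4r)\cap\boz]\subset V$ and $d_H(\partial(B(x_0,4r)\cap\boz),\partial V)\le 4\gz r$. Extend $v$ by zero across $B(x_0,4r)\cap\partial\boz$ (this is admissible because $v\in W^{1,2}_0$ on the boundary piece) and let $w$ be the unique weak solution on $V$ of
\[
\begin{cases}
\divz(\overline{A}\nabla w)=0 & \text{in } V,\\
w=v & \text{on } \partial V,
\end{cases}
\]
where $\overline{A}$ is the average of $A$ on $B(x_0,4r)\cap\boz$. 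Because $V$ is convex and $\overline{A}$ is constant, one has the global $W^{1,q}$ estimate of $w$ for any $q\in(2,\fz)$ (this is classical; convexity yields $W^{2,2}$ and further regularity via standard Sobolev theory, or one can invoke directly the global gradient estimate on convex domains). Writing $v-w$ as a weak solution of a perturbed Dirichlet problem with right-hand side in divergence form equal to $\divz((\overline{A}-A)\nabla v)$, Caccioppoli plus the $(\gz,R)\mbox{-}\bbmo$ smallness of $A-\overline{A}$ gives $L^2$-control of $\nabla(v-w)$ in terms of $\gz\|\nabla v\|_{L^2}$. Combining the global $L^q$ bound on $\nabla w$ and this comparison estimate, and absorbing the small term by choosing $\gz_0$ small depending on $p$, one arrives at the desired weak reverse H\"older inequality on $B_\boz(x_0,2r)$.

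The main technical obstacle is bridging the gap between $B(x_0,4r)\cap\boz$ and the enlarged convex domain $V$: the ``collar'' $V\setminus(B(x_0,4r)\cap\boz)$ has measure controlled by $\gz r^n$, and one must ensure that integrations of $|\nabla w|^q$ over this collar do not destroy the scaling, as well as ensure that the zero extension of $v$ across $B(x_0,4r)\cap\partial\boz$ is compatible with the weak formulation on $V$ (this uses that $v$ vanishes on $B(x_0,4r)\cap\partial\boz$ in the $W^{1,2}_0$-trace sense). A Vitali-type covering argument in the intermediate case (when $x_0$ is not on $\partial\boz$ but $B(x_0,4r)$ does touch $\partial\boz$) is handled by splitting $B_\boz(x_0,2r)$ into sub-balls that are either of interior type or centered on $\partial\boz$, and applying the two cases above. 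The smallness of $\gz_0$ must be chosen uniformly to control both the BMO perturbation error and the Hausdorff-distance error from quasi-convexity, which is why $\gz_0$ ends up depending on $n$, $p$, and $\boz$ but not on $r$.
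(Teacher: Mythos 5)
The paper itself offers no proof of this lemma: it is quoted verbatim from \cite{yyy20} (``Lemma \ref{l3.10} was established in \cite{yyy20}''), so the relevant comparison is with the argument of that reference, whose overall scheme (freezing the coefficients, comparing with a constant-coefficient reference problem on the convex comparison domain furnished by quasi-convexity, and exploiting the $(\gz,R)$-$\bbmo$ smallness) your outline does capture. However, as written your proof has a genuine gap at the decisive last step. From the comparison you only obtain $\|\nabla(v-w)\|_{L^2(B_\boz(x_0,2r))}\ls\gz^{\theta}\|\nabla v\|_{L^2(B_\boz(x_0,4r))}$ for some $\theta>0$, i.e.\ smallness of the error \emph{in $L^2$ only}. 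Splitting $\|\nabla v\|_{L^p(B_\boz(x_0,2r))}\le\|\nabla w\|_{L^p}+\|\nabla(v-w)\|_{L^p}$, the second term is simply not controlled, and there is nothing to ``absorb'': the unknown quantity $\|\nabla v\|_{L^p}$ never appears multiplied by a small factor. Passing from a single-scale $L^2$-approximation by a function with good ($L^\fz$ or high $L^q$) gradient bounds to $L^p$-integrability of $\nabla v$ requires the scale-iteration machinery — an approximation at every point and every scale combined with a good-$\lambda$/modified Vitali covering argument on the maximal function of $|\nabla v|^2$ (Caffarelli--Peral, \cite{bw04}), or equivalently an application of Shen's real-variable theorem \cite{sh07} to the local problem. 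This iteration is the actual content of the lemma and is missing from your sketch.

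Two further points would need repair. First, the claim that the reference problem on the convex domain $V$ with data $w=v$ on $\partial V$ enjoys a ``global $W^{1,q}$ estimate for any $q\in(2,\fz)$'' is false as stated: convexity gives $W^{2,2}$ (and higher integrability) for \emph{zero} Dirichlet data, whereas here the datum is merely an $H^{1/2}(\partial V)$ trace, for which the solution is in general no better than $H^1(V)$. What the argument actually needs, and what is true, is a \emph{local boundary} gradient estimate ($L^\fz$, or a reverse H\"older inequality of arbitrarily high exponent) for constant-coefficient solutions vanishing on a convex boundary portion, applied in $B(x_0,2r)\cap V$ where the extended $v$ (hence $w$, after an appropriate set-up of the comparison problem) vanishes on the relevant portion of $\partial V$; away from that portion one uses interior estimates. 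Second, the $(\gz,R)$-$\bbmo$ condition of Definition \ref{d2.4} only constrains oscillations over balls \emph{contained in} $\boz$, so the smallness of $\fint_{B_\boz(x_0,4r)}|A-\overline{A}|$ over a boundary half-ball does not follow directly from the hypothesis and needs a covering/corkscrew argument (or a suitable extension of $A$). These are the steps that \cite{yyy20} (following \cite{jlw10}) carries out and that your proposal currently elides.
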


Lemma \ref{l3.10} was established in \cite{yyy20}.

\begin{proof}[Proof of Lemma \ref{l3.9}]
Via replacing Lemma \ref{l3.8} by Lemma \ref{l3.10}, and repeating the proof of Lemma \ref{l3.6},
we prove the present lemma, which completes the proof of Lemma \ref{l3.9}.
\end{proof}

Finally, we prove Theorem \ref{t1.1} by using Lemmas \ref{l3.5}, \ref{l3.6},
and \ref{l3.9}.

\begin{proof}[Proof of Theorem \ref{t1.1}]
We first show (i). When $n\ge3$, by Lemma \ref{l3.5}, we find that the desired conclusion of (i)
holds true in this case.

Now, we assume that $n=2$. Denote by $L_D^\ast$ the adjoint operator of $L_D$.
Then $L_D^\ast=-\dive (A^\ast\nabla\cdot)$, where $A^\ast$ is the transpose of
the matrix $A$. Denote by $\{e^{-tL^\ast_D}\}_{t>0}$ the semigroup generated by $L_D^\ast$.
Then, for any $t\in(0,\fz)$, $(e^{-tL_D})^\ast=e^{-tL^\ast_D}$ (see, for instance,
\cite[p.\,41, Corollary 10.6]{p83}), where $(e^{-tL_D})^\ast$ denotes the adjoint operator $e^{-tL_D}$,
which implies that, for any $t\in(0,\fz)$ and $x,\,y\in\boz$, $K^{L_D}_t(x,y)=K^{L_D^\ast}_t(y,x)$.
Here and thereafter, $\{K^{L_D^\ast}_t\}_{t>0}$ denote the kernels of the semigroup $\{e^{-tL^\ast_D}\}_{t>0}$.
Thus, to prove (i) in the case of $n=2$, it suffices to show that there exists a $\dz_0\in(0,1]$ such that,
for any given $\dz\in(0,\dz_0)$, there exists a constant $c\in(0,\fz)$ such that, for any $t\in(0,\fz)$ and
$x,\,y_1,\,y_2\in\boz$ with $|y_1-y_2|\le\sqrt{t}/2$,
\begin{equation}\label{eq3.27}
\lf|K^{L_D^\ast}_{t}(y_1,x)-K^{L_D^\ast}_{t}(y_2,x)\r|\ls\frac{1}{t^{n/2}}
\lf[\frac{|y_1-y_2|}{\sqrt{t}}\r]^{\dz}\exp\lf\{-\frac{|x-y_1|^2}{ct}\r\}.
\end{equation}

For any given $t\in(0,\fz)$ and $x\in \boz$, let $u(\cdot):=K^{L^\ast_D}_t(\cdot,x)$
and $f(\cdot):=-\frac{d}{dt}K^{L^\ast_D}_t(\cdot,x)$. From the facts that, for any $t\in(0,\fz)$ and $x\in\boz$,
$K^{L^\ast_D}_t(\cdot,x),\,\frac{d}{dt}K^{L^\ast_D}_t(\cdot,x)\in\cd(L^\ast_D)\subset W^{1,2}_0(\boz)$,
and $L^\ast_De^{-tL_D^\ast}g=-\frac{d}{dt}e^{-tL_D^\ast}g$ for any $g\in L^2(\boz)$, we deduce that
\begin{equation}\label{eq3.28}
\begin{cases}
\dive (A^\ast\nabla u)=f\ \ &\text{in}\ \ \boz,\\
u=0 \ \ &\text{on}\ \ \partial\boz.
\end{cases}
\end{equation}
Let $p_0\in(2,\fz)$ be as in Lemma \ref{l3.6}, and $\dz_0:=1-2/p_0$.
For any given $\dz\in(0,\dz_0)$, take $\dz_1\in(\dz,\dz_0)$ sufficiently large
such that there exists a $p_1\in(2,p_0)$ satisfying $\dz_1:=1-2/p_1$.
By \eqref{eq3.28} and Lemma \ref{l3.6}, we conclude that
\begin{equation}\label{eq3.29}
\lf\|\nabla K^{L^\ast_D}_t(\cdot,x)\r\|_{L^{p_1}(\boz;\rn)}\ls
\lf\|\frac{d}{dt}K^{L^\ast_D}_t(\cdot,x)\r\|_{L^{(p_1)_\ast}(\boz)}.
\end{equation}
Then, from $n=2$, the Sobolev embedding theorem (see, for instance,
\cite[Theorem 7.26]{gt01}), the Poincar\'e inequality (see, for instance, \cite[Theorem 1.1]{bk95}), and
\eqref{eq3.29}, it follows that $K^{L^\ast_D}_t(\cdot,x)\in C^{0,\,\dz_1}(\overline{\boz})$ and,
for any $t\in(0,\fz)$ and $x\in\boz$,
\begin{align}\label{eq3.30}
\lf\|K^{L^\ast_D}_t(\cdot,x)\r\|_{C^{0,\,\dz_1}(\overline{\boz})}&\ls
\lf\|K^{L^\ast_D}_t(\cdot,x)\r\|_{W^{1,p_1}(\boz)}\ls\lf\|\nabla
K^{L^\ast_D}_t(\cdot,x)\r\|_{L^{p_1}(\boz;\rn)}\\ \nonumber
&\ls\lf\|\frac{d}{dt}K^{L^\ast_D}_t(\cdot,x)\r\|_{L^{(p_1)_\ast}(\boz)}.
\end{align}
Here, $C^{0,\,\dz_1}(\overline{\boz})$ denotes the \emph{H\"older space} on $\boz$,
which is defined by setting
$$C^{0,\,\dz_1}(\overline{\boz}):=\lf\{g\ \text{is bounded and continuous on}\ \boz:\
\|g\|_{C^{0,\,\dz_1}(\overline{\boz})}<\fz\r\}
$$
with
$$\|g\|_{C^{0,\,\dz_1}(\overline{\boz})}:=\sup_{x,\,y\in\boz,\,x\neq y}\frac{|g(x)-g(y)|}{|x-y|^{\dz_1}}.$$
Furthermore, by \cite[Theorem 6.17]{o05} and \eqref{eq1.5}, we find that,
for any $t\in(0,\fz)$ and $x,\,y\in\boz$,
\begin{align*}
\lf|\frac{d}{dt}K^{L^\ast_D}_t(x,y)\r|\ls\frac{1}{t^{1+n/2}}\exp\lf\{-\frac{|x-y|^2}{ct}\r\},
\end{align*}
which further implies that
\begin{align}\label{eq3.31}
\lf\|\frac{d}{dt}K^{L^\ast_D}_t(\cdot,x)\r\|_{L^{(p_1)_\ast}(\boz)}
\ls\lf[\int_\boz\frac{1}{t^{1+n/2}}e^{-\frac{(p_1)_\ast|x-y|^2}{ct}}\,dy\r]^{1/(p_1)_\ast}
\ls\frac{1}{t^{1+\frac{n}{2}[1-\frac{1}{(p_1)_\ast}]}}.
\end{align}
From \eqref{eq3.30}, \eqref{eq3.31}, and $\frac{1}{(p_1)_\ast}-\frac{1}{p_1}=\frac1n$,
we deduce that, for any $t\in(0,\fz)$ and $x\in\boz$,
\begin{align}\label{eq3.32}
\lf\|K^{L^\ast_D}_t(\cdot,x)\r\|_{C^{0,\,\dz_1}(\overline{\boz})}
\ls\frac{1}{t^{n/2}}\frac{1}{t^{\frac{1}{2}(1-\frac{n}{p_1})}}\sim\frac{1}{t^{(n+\dz_1)/2}}.
\end{align}
Thus, by \eqref{eq3.32}, we conclude that, for any $t\in(0,\fz)$ and $x,\,y_1,\,y_2\in\boz$,
\begin{align}\label{eq3.33}
\lf|K^{L^\ast_D}_t(y_1,x)-K^{L^\ast_D}_t(y_2,x)\r|
\ls\frac{1}{t^{n/2}}\lf[\frac{|y_1-y_2|}{t^{1/2}}\r]^{\dz_1}.
\end{align}
On the other hand, from \eqref{eq1.5}, it follows that, for any $t\in(0,\fz)$ and
$x,\,y_1,\,y_2\in\boz$ with $|y_1-y_2|\le\sqrt{t}/2$,
$$
\lf|K^{L^\ast_D}_t(y_1,x)-K^{L^\ast_D}_t(y_2,x)\r|\le
\lf|K^{L^\ast_D}_t(y_1,x)\r|+\lf|K^{L^\ast_D}_t(y_2,x)\r|
\ls\frac{1}{t^{n/2}}\exp\lf\{-\frac{|x-y_1|^2}{ct}\r\},
$$
which, together with \eqref{eq3.33}, further implies that
\begin{align*}
\lf|K^{L^\ast_D}_t(y_1,x)-K^{L^\ast_D}_t(y_2,x)\r|\ls\frac{1}{t^{n/2}}\lf[\frac{|y_1-y_2|}{t^{1/2}}\r]^{\dz}
\exp\lf\{-\frac{|x-y_1|^2}{ct}\r\}.
\end{align*}
This finishes the proof of \eqref{eq3.27}, and hence of (i).

Next, we show (ii). To prove this, similarly to the proof of (i) in
the case of $n=2$, it suffices to show that, for any given $\dz_0\in(0,1)$,
there exists a constant $\gz_0\in(0,\fz)$, depending on $\dz_0$, $n$, and $\boz$,
such that, if $A$ satisfies the $(\gz,R)\mbox{-}\bbmo$ condition and
$\boz$ is a $(\gz,\sz,R)$ quasi-convex domain for some $\gz\in(0,\gz_0)$, $\sigma\in(0,1)$,
and $R\in(0,\fz)$, then, for any given $\dz\in(0,\dz_0)$, there exists a positive constant
$c\in(0,\fz)$ such that, for any $t\in(0,\fz)$ and $x,\,y_1,\,y_2\in\boz$ with $|y_1-y_2|\le\sqrt{t}/2$,
\begin{equation}\label{eq3.34}
\lf|K^{L_D^\ast}_{t}(y_1,x)-K^{L_D^\ast}_{t}(y_2,x)\r|\ls\frac{1}{t^{n/2}}
\lf[\frac{|y_1-y_2|}{\sqrt{t}}\r]^{\dz}\exp\lf\{-\frac{|x-y_1|^2}{ct}\r\}.
\end{equation}
For any given $\dz_0\in(0,1)$, let $p\in(n,\fz)$ be such that
$\dz_0=1-n/p$. By Lemma \ref{l3.9}, we find that there exists a constant $\gz_0\in(0,1)$,
depending on $p$, $n$, and $\boz$, such that, if $A$ satisfies the $(\gz,R)\mbox{-}\bbmo$
condition and $\boz$ is a $(\gz,\sz,R)$ quasi-convex domain for some $\gz\in(0,\gz_0)$,
$\sigma\in(0,1)$, and $R\in(0,\fz)$, then
\begin{equation}\label{eq3.35}
\lf\|\nabla K^{L^\ast_D}_t(\cdot,x)\r\|_{L^p(\boz;\rn)}\ls
\lf\|\frac{d}{dt}K^{L^\ast_D}_t(\cdot,x)\r\|_{L^{p_\ast}(\boz)}.
\end{equation}
Replacing \eqref{eq3.29} by \eqref{eq3.35}, and repeating the estimation of
\eqref{eq3.27}, we conclude that \eqref{eq3.34} holds true. This finishes the proof of
(ii), and hence of Theorem \ref{t1.1}.
\end{proof}

\section{Proof of Theorem \ref{t1.2}}\label{s4}

In this section, we show Theorem \ref{t1.2}. We begin with recalling some concepts on the tent
space $T^{p}(\boz\times(0,\fz))$ (see, for instance, \cite{r07}).

\begin{definition}\label{d4.1}
Let $\Omega\subset\rn$ be a bounded {\rm NTA} domain and $p\in(0,\fz)$. Then the \emph{tent space} $T^{p}(\boz\times(0,\fz))$
is defined to be the set of all measurable functions $f$ on $\boz\times(0,\fz)$ such that $\|f\|_{T^{p}(\boz\times(0,\fz))}:
=\|\ca(f)\|_{L^{p}(\Omega)}<\infty$, where, for any $x\in\boz$,
$$\ca(f)(x):=\lf[\int^\infty_0\int_{\Gamma(x)}|f(y,t)|^2\frac{dy\,dt}{|B_\boz(x,t)|t}\r]^{1/2},$$
where $\Gamma(x):=\{y\in\Omega:\ |x-y|<t\}$.

Moreover, assume that $O$ is an open subset of $\Omega$. Then the \emph{tent} over $O$, denoted by $T(O)$,
is defined by setting
$$T(O):=\lf\{(x,t)\in\Omega\times(0,\infty):\mathrm{dist}\left(x,O^\complement\cap\boz\right)\ge t\r\}.$$
Let $p\in(0,1]$ and $q\in(1,\fz)$. A measurable function $a$ on $\boz\times(0,\fz)$ is called a \emph{$(p,\,q)$-atom}
if there exists a ball $B_\boz$ of $\boz$, which means $B_\boz:=B\cap\boz$ and $B:=B(x_B,r_B)$
is a ball of $\rn$ with $x_B\in\boz$ and $r_B\in(0,\diam(\boz))$, such that
\begin{enumerate}
\item[\rm{(i)}]  $\supp(a):=\lf\{(x,t)\in\Omega \times(0,\infty):\ a(x,t)\neq0\r\}\subset T(B_\boz)$;
\item[\rm{(ii)}] $\|a\|_{T^{q}(\Omega\times(0,\fz))}\le|B_\boz|^{1/q-1/p}$.
\end{enumerate}
\end{definition}

Then we have the following atomic decomposition of $T^{p}(\Omega\times(0,\fz))$, which is
a special case of \cite[Theorem 1.1]{r07}.

\begin{lemma}\label{l4.1}
Let $n\ge2$, $\boz\subset\rn$ be a bounded {\rm NTA} domain, $p\in(0,1]$, and $q\in(1,\fz)$.
Then, for any $f\in T^{p}(\Omega\times(0,\fz))$, there exist $\{\lambda_j\}_{j\in\nn}\subset\mathbb{C}$
and a sequence $\{a_j\}_{j\in\nn}$ of $(p,\,q)$-atoms such that, for almost every
$(x,t)\in \boz\times(0,\infty)$, $f(x,t)=\sum_{j\in\nn}\lambda_ja_j(x,t)$ and
$$C^{-1}\|f\|_{T^{p}(\boz\times(0,\fz))}\le \lf(\sum_{j=1}^\fz|\lz_j|^p\r)^{1/p}\le C\|f\|_{T^{p}(\boz\times(0,\fz))},$$
where $C$ is a positive constant independent of $f$.
Moreover, if $f\in T^{p}(\boz\times(0,\fz))\cap T^2(\boz\times(0,\fz))$, then $f(x,t)=\sum_{j\in\nn}\lambda_ja_j(x,t)$
holds true in both $T^{p}(\boz\times(0,\fz))$ and $T^2(\boz\times(0,\fz))$.
\end{lemma}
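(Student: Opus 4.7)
The plan is to reduce the statement to the general atomic decomposition of tent spaces on spaces of homogeneous type, which is precisely the content of \cite{r07}. By Lemma~\ref{l2.1}(i) together with Remark~\ref{r2.1}, the triple $(\boz,|\cdot|,dx)$ is a space of homogeneous type whose intrinsic balls are exactly the sets $B_\boz(x,t)=B(x,t)\cap\boz$ appearing in Definition~\ref{d4.1}. Thus the area function $\ca(f)$, the tents $T(O)$, and the $(p,q)$-atoms used in Definition~\ref{d4.1} coincide with the objects in \cite{r07} when that theory is instantiated on $(\boz,|\cdot|,dx)$, so applying \cite[Theorem~1.1]{r07} directly yields both the decomposition and the norm equivalence. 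I would, however, sketch the construction below to pin down the precise form of the atoms and the convergence statements.

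Given $f\in T^p(\boz\times(0,\fz))$, set $O_k:=\{x\in\boz:\ca(f)(x)>2^k\}$ for each $k\in\zz$; these are open in $\boz$ and decreasing in $k$, with $\sum_k 2^{kp}|O_k|\sim\|\ca(f)\|_{L^p(\boz)}^p$. Applying a Whitney-type covering of each $O_k$ inside the homogeneous space $\boz$, one produces a countable family $\{B_{\boz,k,j}\}_j$ of balls of $\boz$ with bounded overlap whose radii are comparable to the distance to $\boz\setminus O_k$. Partitioning $T(O_k)\setminus T(O_{k+1})$ among the Whitney pieces, one defines
$$
a_{k,j}(x,t):=\lz_{k,j}^{-1}f(x,t)\mathbf{1}_{E_{k,j}}(x,t),\qquad
\lz_{k,j}:=2^k|B_{\boz,k,j}|^{1/p},
$$
where $E_{k,j}\subset T(C B_{\boz,k,j})$ for some absolute dilation constant $C$. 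The key duality estimate, using the $T^q$--$T^{q'}$ pairing and the fact that $\ca(f\mathbf{1}_{E_{k,j}})\ls 2^k$ on $CB_{\boz,k,j}\setminus O_{k+1}$ (which has measure comparable to $|B_{\boz,k,j}|$ by the Whitney property), yields $\|a_{k,j}\|_{T^q(\boz\times(0,\fz))}\le|B_{\boz,k,j}|^{1/q-1/p}$, so each $a_{k,j}$ is a $(p,q)$-atom. Summing, $\sum_{k,j}|\lz_{k,j}|^p\ls\|f\|_{T^p(\boz\times(0,\fz))}^p$; the reverse inequality follows from the uniform estimate $\|a\|_{T^p(\boz\times(0,\fz))}\ls 1$ for every $(p,q)$-atom $a$, combined with the $p$-subadditivity of $\|\cdot\|_{T^p(\boz\times(0,\fz))}^p$.

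For the convergence claim, the sets $\{T(O_k)\setminus T(O_{k+1})\}_{k\in\zz}$ partition $\boz\times(0,\fz)$ up to a null set, so the partial sums $\sum_{(k,j)\in F}\lz_{k,j}a_{k,j}$ over finite $F$ converge pointwise a.e.\ to $f$. When $f\in T^2(\boz\times(0,\fz))$ as well, $T^2$-convergence follows from the dominated convergence theorem with majorant $|f|$ in the tent norm, while $T^p$-convergence is immediate from $\sum|\lz_{k,j}|^p<\fz$. The main technical obstacle is the Whitney decomposition and the associated duality estimate near $\partial\boz$: one must check that the intrinsic cones $\bgz(x)=\{(y,t)\in\boz\times(0,\fz):|x-y|<t\}$ interact well with the tents over $B_\boz$-balls, and this is exactly where the space-of-homogeneous-type structure from Lemma~\ref{l2.1}(i) is indispensable. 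Since this entire analysis is carried out abstractly in \cite{r07}, no further argument beyond citing that paper is needed.
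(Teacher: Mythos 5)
Your proposal is correct and follows essentially the same route as the paper: the paper gives no independent proof of Lemma \ref{l4.1}, but simply observes (via Lemma \ref{l2.1}(i) and Remark \ref{r2.1}) that $(\boz,|\cdot|,dx)$ is a space of homogeneous type and then invokes \cite[Theorem 1.1]{r07}, which is exactly your reduction. Your additional sketch of the stopping-time/Whitney construction is the standard argument underlying that cited theorem and does not deviate from it.
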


Next, we prove Theorem \ref{t1.2} by using Lemmas \ref{l2.1}, \ref{l2.2}, \ref{l2.3}, \ref{l2.4}, and \ref{l4.1}.

\begin{proof}[Proof of Theorem \ref{t1.2}]
Let $p\in(\frac{n}{n+1},1]$. We first prove that
\begin{equation}\label{eq4.1}
H^p_r(\boz)=H^p(\boz)
\end{equation}
with equivalent quasi-norms. Let $f\in H^p_r(\boz)$. Then there exists an $F\in H^p(\rn)$ such that $F|_\boz=f$ and
\begin{equation}\label{eq4.2}
\|f\|_{H^p_r(\boz)}\sim\|F\|_{H^p(\rn)}.
\end{equation}
Moreover, by Lemma \ref{l2.2}, we find that there exist a sequence $\{a_j\}_{j\in\nn}$
of $(p,\,\fz,\,0)$-atoms, and $\{\lz_j\}_{j\in\nn}\subset\cc$ satisfying $\sum_{j=1}^\fz|\lz_j|^p<\fz$ such that
$F=\sum_{j=1}^\fz\lz_ja_j$ in $\cs'(\rn)$, and
\begin{equation}\label{eq4.3}
\|F\|_{H^p(\rn)}\sim\lf(\sum_{j=1}^\fz|\lz_j|^p\r)^{1/p}.
\end{equation}
To show $f\in H^p(\boz)$ and $\|f\|_{H^p(\boz)}\ls\|f\|_{H^p_r(\boz)}$,
it suffices to prove that, for any $(p,\,\fz,\,0)$-atom $a$ supported in the ball $B:=B(x_B,r_B)\subset\rn$
with $x_B\in\rn$ and $r_B\in(0,\fz)$, $a\in H^p(\boz)$ and
\begin{equation}\label{eq4.4}
\|a\|_{H^p(\boz)}\ls1.
\end{equation}
Indeed, if \eqref{eq4.4} holds true, from \eqref{eq4.2}, \eqref{eq4.3}, and \eqref{eq4.4}, it follows that
$f\in H^p(\boz)$ and
\begin{equation*}
\|f\|^p_{H^p(\boz)}=\|F\|^p_{H^p(\boz)}\ls\sum_{j=1}^\fz|\lz_j|^p\|a_j\|^p_{H^p(\boz)}\ls
\sum_{j=1}^\fz|\lz_j|^p\sim\|f\|^p_{H^p_r(\boz)},
\end{equation*}
which further implies that $\|f\|_{H^p(\boz)}\ls\|f\|_{H^p_r(\boz)}$.

Now, we prove \eqref{eq4.4} by considering the following two cases on the ball $B$.

\emph{Case 1)} $4B\subset\boz$. In this case, by the definitions of $(p,\,\fz,\,0)$-atoms,
we conclude that $a$ is a type $(a)$ $(p,\,\fz,\,0)_\boz$-atom, which, combined with Lemma \ref{l2.3},
implies that $a\in H^p(\boz)$ and \eqref{eq4.4} holds true.

\emph{Case 2)} $4B\cap\partial\boz\neq\emptyset$. In this case, let $\phi$ be as in Definition \ref{d1.1}(iv).
We first claim that $\supp(a^+_\boz)\subset 8B_\boz$. We prove this claim via borrowing some ideas from
the proof of \cite[Theorem 3.1]{bd18}. Indeed, for any $x\in (8B_\boz)^\complement$,
\begin{align}\label{eq4.5}
a^+_\boz(x)&=\sup_{t\in(0,\dz(x)/2)}\lf|\int_{B_\boz}\frac{1}{t^n}\phi\lf(\frac{x-y}{t}\r)a(y)\,dy\r|\\ \nonumber
&\le\lf[\sup_{t\in(0,7r_B)}+
\sup_{t\in(7r_B,\dz(x)/2)}\r]\lf|\int_{B_\boz}\frac{1}{t^n}\phi\lf(\frac{x-y}{t}\r)a(y)\,dy\r|\\ \nonumber
&=:{\rm I}_1+{\rm I}_2,
\end{align}
where $\dz(x):=\dist(x,\partial\boz)$ with $\partial\boz$ being the boundary of $\boz$.
From the fact that $|x-y|>7r_B$ for any $y\in B_\boz$ and $x\in(8B_\boz)^\complement$, we deduce that,
for any $t\in(0,7r_B)$, $y\in B_\boz$, and $x\in(8B_\boz)^\complement$, $\phi(\frac{x-y}{t})=0$,
which implies that ${\rm I}_1=0$. Moreover, it is easy to see that, when $\dz(x)\le14r_B$,
${\rm I}_2=0$. We assume that $\dz(x)>14r_B$. In this case, we find that, for any $y\in B_\boz$
and $x\in(8B_\boz)^\complement$ with $\dz(x)>14r_B$,
\begin{align}\label{eq4.6}
|x-y|\ge\dz(x)-\dz(y).
\end{align}
Since $4B\cap\partial\boz\neq\emptyset$, it follows that, for any $y\in B_\boz$, $\dz(y)\le 4r_B$,
which, together with \eqref{eq4.6} implies that, for any $y\in B_\boz$ and $x\in(8B_\boz)^\complement$
with $\dz(x)>14r_B$,
$$
|x-y|\ge\dz(x)-\dz(y)\ge\dz(x)-4r_B>\dz(x)-\dz(x)/2=\dz(x)/2>t.
$$
By this, we conclude that, for any $t\in[7r_B,\dz(x)/2)$,
$y\in B_\boz$, and $x\in(8B_\boz)^\complement$, $\phi(\frac{x-y}{t})=0$,
which further implies that ${\rm I}_2=0$. This, combined with \eqref{eq4.5} and ${\rm I}_1=0$,
concludes that, for any $x\in (8B_\boz)^\complement$, $a^+_\boz(x)=0$. Thus, $\supp(a^+_\boz)\subset 8B_\boz$.

Recall that the \emph{Hardy--Littlewood maximal operator} $\cm$ on $\rn$ is defined by setting,
for any $f\in L^1_{\loc}(\rn)$ and $x\in\rn$,
$$\cm(f)(x):=\sup_{B\ni x}\fint_{B}|f(y)|\,dy,$$
where the supremum is taken over all balls $B\subset\rn$ containing $x$.
Then, from $\supp(a^+_\boz)\subset 8B_\boz$, the fact that $a^+_\boz\ls\cm(a)$, the H\"older inequality,
and the boundedness of $\cm$ on $L^q(\rn)$ with $q\in(1,\fz]$, we deduce that
\begin{align*}
\|a\|_{H^p(\boz)}&=\lf\|a^+_\boz\r\|_{L^p(\boz)}=\lf\|a^+_\boz\r\|_{L^p(8B_\boz)}
\ls\lf\|\cm(a)\r\|_{L^q(8B_\boz)}|8B_\boz|^{1/p-1/q}\\ \nonumber
&\ls\|a\|_{L^q(B_\boz)}|B_\boz|^{1/p-1/q}
\ls|B_\boz|^{1/q-1/p}|B_\boz|^{1/p-1/q}\sim1.
\end{align*}
Therefore, \eqref{eq4.4} holds true.

Next, we assume that $f\in H^p(\boz)$. By Lemma \ref{l2.3}, we find that
there exist two sequences $\{\lz_j\}^\infty_{j=1}\subset \mathbb{C}$ and
$\{\kappa_j\}^\infty_{j=1}\subset \mathbb{C}$,  a sequence $\{a_{j}\}_{j=1}^\fz$
of type $(a)$ $(p,\,\fz,\,0)_\boz$-atoms, and a sequence $\{b_{j}\}_{j=1}^\fz$
of type $(b)$ $(p,\,\fz)_\boz$-atoms such that
\begin{align*}
f=\sum_{j=1}^\fz\lambda_{j}a_{j} +\sum_{j=1}^\fz\kappa_{j}b_{j}
\end{align*}
in $\cd'(\boz)$, and
\begin{align}\label{eq4.7}
\|f\|_{H^p(\boz)}\sim\lf(\sum_{j=1}^\fz|\lambda_{j}|^p\r)^{1/p}
+\lf(\sum_{j=1}^\fz|\kappa_{j}|^p\r)^{1/p}.
\end{align}

Let $b$ be a type $(b)$ $(p,\,\fz)_\boz$-atom supported in the ball $B:=B(x_B,r_B)\subset\boz$
with $x_B\in\boz$ and $r_B\in(0,\fz)$. From Lemma \ref{l2.1}(ii), it follows that there exists a
ball $\wz{B}(x_{\wz{B}},r_{\wz{B}})\subset\boz^\complement$ with $x_{\wz{B}}\in\boz^\complement$
and $r_{\wz{B}}\in(0,\fz)$ such that $r_{\wz{B}}\sim r_B$ and $\dist(B,\wz{B})\sim r_B$.
Assume that $B_0(x_{B_0},r_{B_0})\subset\rn$ with $x_{B_0}\in\rn$ and $r_{B_0}\in(0,\fz)$ is a ball
such that $B\cup\wz{B}\subset B_0$ and $r_{B_0}\sim r_B$. Let
\begin{align}\label{eq4.8}
\wz{b}:=b-\lf[\frac{1}{|\wz{B}|}\int_B b(x)\,dx\r]\mathbf{1}_{\wz{B}}.
\end{align}
Then $\wz{b}|_\boz=b$, $\supp(\wz{b})\subset B_0$, and $\int_\rn\wz{b}(x)\,dx=0$.
Moreover, it is easy to see that
$$
\lf\|\wz{b}\r\|_{L^\fz(\rn)}\le\|b\|_{L^\fz(B)}\lf[1+\frac{|B|}{|\wz{B}|}\r]\ls
\|b\|_{L^\fz(B)}\ls|B|^{-1/p}\sim|B_0|^{-1/p}.
$$
Therefore, $\wz{b}$ is a harmless constant multiple of a $(p,\,\fz,\,0)$-atom
supported in the ball $B_0$.

For any $j\in\nn$, let $\wz{b}_j$ be as in \eqref{eq4.8} with $b$ replaced by $b_j$. Define $$\wz{f}:=\sum_{j=1}^\fz\lz_ja_j+\sum_{j=1}^\fz\kappa_j\wz{b}_j.$$
Then $\wz{f}|_\boz=f$, $\wz{f}\in H^p(\rn)$, and
$$
\lf\|\wz{f}\r\|_{H^p(\rn)}\ls\lf(\sum_{j=1}^\fz|\lambda_{j}|^p\r)^{1/p}
+\lf(\sum_{j=1}^\fz|\kappa_{j}|^p\r)^{1/p},
$$
which, together with \eqref{eq4.7}, further implies that $f\in H^p_r(\boz)$
and $\|f\|_{H^p_r(\boz)}\ls\|f\|_{H^p(\boz)}$. This finishes the proof of \eqref{eq4.1}.

Let $p\in(\frac{n}{n+\dz_0},1]$, where $\dz_0\in(0,1]$ is as in Theorem \ref{t1.1}.
Now, we prove that $H^p_r(\boz)=H^p_{L_D}(\boz)$ with equivalent quasi-norms.
To this end, we first show that
\begin{equation}\label{eq4.9}
\lf[H^p_{r}(\boz)\cap L^2 (\boz)\r]\subset\lf[H^p_{L_D}(\boz)\cap L^2(\boz)\r].
\end{equation}
Let $f\in [H^p_{r}(\boz)\cap L^2(\boz)]$. By the definition of $H^p_{r}(\boz)$, we conclude that
there exists an $\wz{f}\in H^p(\rn)$ such that $\wz{f}\big|_{\boz}=f$ and
\begin{equation}\label{eq4.10}
\lf\|\wz{f}\r\|_{H^p(\rn)}\ls\|f\|_{H^p_r(\boz)}.
\end{equation}
Then there exist a sequence $\{a_j\}_{j\in\nn}$ of $(p,\fz,0)$-atoms, and $\{\lz_j\}_{j\in\nn}
\subset\cc$ such that
\begin{equation}\label{eq4.11}
\wz{f}=\sum_{j\in\nn}\lz_j a_j
\end{equation}
in both $\cs'(\rn)$ and $(H^p(\rn))^\ast$, and
\begin{equation}\label{eq4.12}
\lf\|\wz{f}\r\|_{H^p(\rn)}\sim\lf(\sum_{j\in\nn}|\lz_j|^p\r)^{1/p},
\end{equation}
where $(H^p(\rn))^\ast$ denotes the dual space of $H^p(\rn)$. Denote by $\{H^{L_D}_t\}_{t>0}$ the kernels
of the family $\{tL_De^{-tL_D}\}_{t>0}$ of operators. From the fact that, for any $t\in(0,\fz)$,
$$tL_De^{-tL_D}=2\lf(\frac{t}{2}L_De^{-t/2L_D}\r)e^{-t/2L_D},$$
we deduce that, for any $t\in(0,\fz)$ and $x,\,y\in\boz$,
\begin{equation}\label{eq4.13}
H^{L_D}_t(x,y)=2\int_\boz H^{L_D}_{t/2}(x,z)K^{L_D}_{t/2}(z,y)\,dz.
\end{equation}
Moreover, by \cite[Theorem 6.17]{o05} and \eqref{eq1.5}, we find that,
for any $t\in(0,\fz)$ and $x,\,y\in\boz$,
\begin{align}\label{eq4.14}
\lf|H^{L_D}_t(x,y)\r|\ls\frac{1}{t^{n/2}}\exp\lf\{-\frac{|x-y|^2}{ct}\r\},
\end{align}
which, combined with \eqref{eq4.13} and Theorem \ref{t1.1}, implies that, for any given $\dz\in(0,\dz_0)$,
and any $t\in(0,\fz)$ and $x,\,y,\,z\in\boz$ with $|y-z|\le\sqrt{t}/2$,
\begin{equation}\label{eq4.15}
\lf|H^{L_D}_t(x,y)-H^{L_D}_{t}(x,z)\r|\ls\frac{1}{t^{n/2}}
\lf[\frac{|y-z|}{\sqrt{t}}\r]^{\dz}\exp\lf\{-\frac{|x-y|^2}{ct}\r\}.
\end{equation}
For any $t\in(0,\fz)$ and $x\in\boz$, denote by $\widetilde{H}^{L_D}_{t}(x,\cdot)$ the zero extension of
$H^{L_D}_{t}(x,\cdot)$ from $\Omega$ to $\mathbb{R}^n$.
Let $\gz\in(0,\fz)$.  Then the \emph{Campanato space} $\mathcal{L}^{\gz,\,1,\,0}(\rn)$ is defined by setting
$$
\mathcal{L}^{\gz,\,1,\,0}(\rn):=\lf\{g\in L^1_\loc(\rn):\ \|g\|_{\mathcal{L}^{\gz,\,1,\,0}(\rn)}<\fz\r\},
$$
where
$$\|g\|_{\mathcal{L}^{\gz,\,1,\,0}(\rn)}:=\sup_{B\subset\rn}\lf[|B|^{-\gz}\fint_B|g(x)-g_B|\,dx\r]
$$
with the supremum taken over all balls $B\subset\rn$. Then, similarly to the proof of
\cite[Lemma 3.9]{cfyy16}, via applying Theorem \ref{t1.1}
and Lemma \ref{l2.1},  we find that,
for any given $t\in(0,\fz)$, $x\in\boz$, and $\dz\in(0,\dz_0)$, $\widetilde{H}^{L_D}_{t}(x,\cdot)
\in\mathcal{L}^{\dz/n,\,1,\,0}(\rn)$. From this, the fact that $\mathcal{L}^{\frac{1}{p}-1,\,1,\,0}(\rn)$
is the dual space of $H^p(\rn)$ (see, for instance, \cite{st93}), $p\in(\frac{n}{n+\dz_0},1]$,
and \eqref{eq4.11}, it follows that, for any $t\in(0,\infty)$ and $x\in\Omega$,
\begin{align*}
\int_\Omega H^{L_D}_{t^2}(x,y)f(y)\,dy&=\int_\rn \widetilde{H}^{L_D}_{t^2}(x,y)\widetilde{f}(y)\,dy
=\sum_{j\in\nn}\lambda_j \int_\rn \widetilde{H}^{L_D}_{t^2}(x,y)a_j(y)\,dy \\
&=\sum_{j\in\nn}\lambda_j \int_\Omega H^{L_D}_{t^2}(x,y)a_j(y)\,dy,
\end{align*}
which, together with the boundedness of $S_{L_D}$ on $L^2(\Omega)$ (see, for instance,
\cite[Theorem 2.13]{bckyy13b}), further implies that, for almost every $x\in\Omega$,
\begin{equation}\label{eq4.16}
S_{L_D}(f)(x)\le \sum_{j\in\nn}|\lambda_j|S_{L_D}(a_j)(x).
\end{equation}
To prove \eqref{eq4.9}, we only need to show that, for any $(p,\,\fz,\,0)$-atom $a$ supported in the
ball $B:=B(x_B,r_B)$ with $x_B\in\rn$ and $r_B\in(0,\fz)$,
\begin{equation}\label{eq4.17}
\int_\boz\lf[S_{L_D}(a)(x)\r]^p\,dx\ls1.
\end{equation}
Indeed, if \eqref{eq4.17} holds true, then, by \eqref{eq4.16}, \eqref{eq4.17}, \eqref{eq4.10},
\eqref{eq4.11}, and \eqref{eq4.12}, we conclude that $f\in H^p_{L_D}(\boz)$ and
\begin{align*}
\|f\|_{H^p_{L_D}(\boz)}&\ls\lf\{\sum_{j=1}|\lz_j|^p\int_\boz\lf[S_{L_D}(a_j)(x)\r]^p\,dx\r\}^{1/p}
\ls\lf(\sum_{j\in\nn}|\lz_j|^p\r)^{1/p}\\ \nonumber
&\sim\lf\|\wz{f}\r\|_{H^p(\rn)}\ls\|f\|_{H^p_r(\boz)}.
\end{align*}
Thus, \eqref{eq4.9} holds true.

Next, we show \eqref{eq4.17} by considering the following three cases on the ball $B$.

\emph{Case 1)} $B\cap\Omega=\emptyset$. In this case, we find that, for any $x\in\Omega$,
$$t^2L_De^{-t^2L_D}(a)(x)=\int_{B\cap\Omega}H^{L_D}_{t^2}(x,y)a(y)dy=0,$$
which implies that $S_{L_D}(a)=0$. From this, we deduce that \eqref{eq4.17} holds true in this case.

\emph{Case 2)} $B\subset\Omega$. In this case, by the fact that $S_{L_\Omega}$ is bounded on $L^q(\Omega)$
with $q\in(1,\infty)$ (see, for instance, \cite[Theorem 2.13]{bckyy13b}), we conclude that
\begin{align}\label{eq4.18}
\int_{4B_\boz} \lf[S_{L_D}(a)(x)\r]^p\,dx\ls\|S_{L_D}(a)\|^p_{L^q(4B_\boz)}|4B_\boz|^{1-\frac{p}{q}}
\ls\|a\|^p_{L^q(4B_\boz)}|B|^{1-\frac{p}{q}}\ls1.
\end{align}
Furthermore, for any $x\in(4B)^\complement\cap\Omega$, we have
\begin{align}\label{eq4.19}
\lf[S_{L_D}(a)(x)\r]^2
&=\int^{r_{B}}_0\int_{|y-x|<t}
\lf|t^2L_D e^{-t^2L_D}(a)(y)\r|^2\frac{dy\,dt}{|B_\boz(x,t)|t}
+\int^\infty_{r_{B}}\int_{|y-x|<t}\cdots\\ \nonumber
&=:E+F.
\end{align}
It is easy to see that, for any $x\in(4B)^\complement\cap\Omega$, and any $t\in(0,\infty)$ and $y,\,z\in\Omega$
satisfying $|x-y|<t$ and $|z-x_{B}|<r_{B}$,
\begin{align}\label{eq4.20}
t+|y-z|\ge t+\lf|x-x_{B}\r|-|x-y|-\lf|x_{B}-z\r|>\lf|x-x_{B}\r|-r_{B}\ge \frac{3|x-x_{B}|}{4}.
\end{align}
Take $\dz_1\in(0,\dz)$ such that $p>\frac{n}{n+\dz_1}$.
From \eqref{eq4.14}, \eqref{eq4.20}, and the H\"{o}lder inequality, it follows that
\begin{align}\label{eq4.21}
E&=\int^{r_{B}}_0\int_{|y-x|<t}
\lf|\int_{B\cap\Omega}H^{L_D}_{t^2}(y,z)a(z)\,dz\r|^2\frac{dy\,dt}{|B_\boz(x,t)|t}\\ \nonumber
&\ls \int^{r_{B}}_0\int_{|y-x|<t}
\lf|\int_{B}\frac1{t^n}e^{-\frac{|y-z|^2}{ct^2}}a(z)\,dz\r|^2\frac{dy\,dt}{t^{n+1}}\\ \nonumber
&\ls \int^{r_{B}}_0\int_{|y-x|<t}
\lf[\int_{B}\frac{t^{\delta_1}}{|y-z|^{n+\delta_1}} \lf|a(z)\r|\,dz\r]^2\frac{dy\,dt}{t^{n+1}}\\ \nonumber
&\ls \frac1{|x-x_{B}|^{2(n+\delta_1)}}\lf[\int_{B}\lf|a(z)\r|\,dz\r]^2
\int^{r_{B}}_0 t^{2\delta_1-1}\,dt\\ \nonumber
&\ls \lf[\frac{r_{B}}{|x-x_{B}|}\r]^{2(n+\delta_1)}|B|^{-2/p}.
\end{align}
Next, we deal with the term $F$. By $\int_B a(x)\,dx=0$, \eqref{eq4.15}, \eqref{eq4.20}, $t>r_{B}$,
and the H\"{o}lder inequality, we find that, for any $y\in\Omega$,
\begin{align*}
&\lf|t^2L_D e^{-t^2L_D}(a)(y)\r|\\ \nonumber
&\quad\leq\int_{B} \lf|H^{L_D}_{t^2}(y,z)-K^{L_D}_{t^2}(y,x_{B})\r| \lf|a(z)\r|\,dz\\ \nonumber
&\quad\lesssim \int_{B} \frac1{t^n} \lf[\frac{|z-x_{B}|}{t}\r]^{\delta}
e^{-\frac{|y-z|^2}{ct^2}} \lf|a(z)\r|\,dz\\ \nonumber
&\quad\lesssim \int_{B} \lf[\frac{r_{B}}{t}\r]^{\delta}
\frac{t^{\delta_1}}{|y-z|^{n+\delta_1}} \lf|a(z)\r|\,dz\\ \nonumber
&\quad\ls\int_{B} \frac{r_{B}^{\delta}}
{t^{\delta-\dz_1} |x-x_{B}|^{n+\delta_1}} \lf|a(z)\r|\,dz
\lesssim \frac{r_{B}^{n+\delta}}{t^{\delta-\dz_1} |x-x_{B}|^{n+\delta_1}}
|B|^{-1/p}.
\end{align*}
From this, we deduce that
\begin{align*}
F&\lesssim \frac{r_{B}^{2(n+\delta)}|B|^{-2/p}}
{|x-x_{B}|^{2(n+\delta_1)}}
\int^\infty_{r_{B}}\int_{|y-x|<t}t^{-2(\dz-\delta_1)} \,\frac{dy\,dt}{|B_\boz(x,t)|t}\\
&\ls\frac{r_{B}^{2(n+\delta)}|B|^{-2/p}}
{|x-x_{B}|^{2(n+\delta_1)}}\int^\infty_{r_{B}}t^{-2(\dz-\delta_1)-1}\,dt
\ls \lf[\frac{r_{B}}{|x-x_{B}|}\r]^{2(n+\delta_1)}
|B|^{-2/p},
\end{align*}
which, combined with \eqref{eq4.19} and \eqref{eq4.21}, further implies that,
for any $x\in(4B)^\complement\cap\Omega$,
\begin{align*}
S_{L_D}(a)(x)\ls\lf[\frac{r_{B}}{|x-x_{B}|}\r]^{n+\delta_1}|B|^{-1/p}.
\end{align*}
By this and $p>\frac{n}{n+\dz_1}$, we obtain
\begin{align*}
\int_{\boz\setminus(4B_\boz)}\lf[S_{L_D}(a)(x)\r]^p\,dx&=\sum_{j=2}^\fz
\int_{S_j(B_\boz)}\lf[S_{L_D}(a)(x)\r]^p\,dx\\ \nonumber
&\ls\sum_{j=2}^\fz\int_{S_j(B_\boz)}2^{-(n+\dz_1)jp}|B|^{-1}\,dx
\ls\sum_{j=2}^\fz2^{-[(n+\dz_1)p-n]j}\ls1,
\end{align*}
which, together with \eqref{eq4.18}, further implies that \eqref{eq4.17} holds true in this case.

\emph{Case 3)} $B\cap\Omega\neq\emptyset$. In this case, take $y_B\in B\cap\partial\Omega$.
From the fact that, for any given $t\in(0,\fz)$ and $x\in\boz$, $H^{L_D}_{t}(x,\cdot)\in W^{1,2}_0(\boz)$,
it follows that $H^{L_\Omega}_{t}(x,y_B)=0$, which implies that, for any given $t\in(0,\fz)$ and any $x\in\boz$,
$$t^2L_De^{-t^2L_D}(a)(x)=\int_{B\cap\Omega}
\lf[H^{L_D}_{t^2}(x,y)-H^{L_D}_{t^2}(x,y_B)\r]a(y)dy.$$
The remaining estimations are similar to those of Case 2), we omit the details here.
This finishes the proof of \eqref{eq4.17}, and hence of \eqref{eq4.9}.

Now, we prove that
\begin{equation}\label{eq4.22}
\lf[H^p_{L_D}(\boz)\cap L^2(\boz)\r]\subset\lf[H^p_{r}(\boz)\cap L^2 (\boz)\r].
\end{equation}
Let $f\in[H^{p}_{L_D}(\Omega)\cap L^2(\Omega)]$. Then, by the $H^\infty$-functional
calculus associated with $L_\Omega$ (see, for instance, \cite[p.\,8]{at98}), we find that
\begin{equation}\label{eq4.23}
f=8\int^\infty_0 \lf(t^2L_De^{-t^2L_D}\r) \lf(t^2L_De^{-t^2L_D}\r) (f)\frac{dt}{t}
\end{equation}
in $L^2(\Omega)$. From the assumption $f\in[H^{p}_{L_D}(\Omega)\cap L^2(\Omega)]$, we deduce that
$S_{L_D}(f)\in L^{p}(\Omega)\cap L^2(\Omega)$, which implies that
\begin{equation}\label{eq4.24}
t^2L_De^{-t^2L_D}(f)\in T^{p}(\boz\times(0,\fz))\cap T^2(\boz\times(0,\fz))
\end{equation}
and
\begin{equation}\label{eq4.25}
\|f\|_{H^{p}_{L_D}(\Omega)}=\lf\|t^2L_De^{-t^2L_D}(f)\r\|_{T^{p}(\boz\times(0,\fz))}.
\end{equation}
By \eqref{eq4.24} and Lemma \ref{l4.1}, we conclude that there exist $\{\lambda_j\}_{j\in\nn}\subset\mathbb{C}$
and a sequence $\{a_j\}_{j\in\nn}$ of $(p,\,2)$-atoms associated, respectively, with the balls
$\{B_j\cap\boz\}_{j\in\nn}$ of $\boz$ such that, for almost every $(x,t)\in \Omega \times(0,\infty)$,
\begin{equation}\label{eq4.26}
t^2L_D e^{-t^2L_D}(f)= \sum_{j\in\nn}\lambda_ja_j
\end{equation}
and
\begin{equation}\label{eq4.27}
\lf\|t^2L_D e^{-t^2L_D}(f)\r\|_{T^{p}(\boz\times(0,\fz))}\sim \lf(\sum_{j=1}^\fz|\lz_j|^p\r)^{1/p}.
\end{equation}
For any $j\in\nn$, let $\alpha_j:=8\int^\infty_0 t^2L_\Omega e^{-t^2L_\Omega}(a_j(\cdot,t))\frac{dt}{t}$.
Then, from \eqref{eq4.23} and \eqref{eq4.26}, it follows that
\begin{equation}\label{eq4.28}
f=\sum_{j\in\nn}\lz_j \az_j
\end{equation}
in $L^2 (\boz)$.

For any $(p,\,2)$-atom $a$ associated with the ball $B_\boz$, let
\begin{align*}
\az:=8\int_0^{\fz}t^2 L_De^{-t^2 L_D}(a)\frac{dt}{t}.
\end{align*}
To show \eqref{eq4.22}, it suffices to prove that there exist a function $\wz{\az}$ on $\rn$ such that
\begin{equation}\label{eq4.29}
\wz{\az}|_{\boz}=\az,
\end{equation}
and a sequence $\{\kappa_i\}_i\subset\cc$ and a sequence $\{b_i\}_i$ of $(p,\,2,\,0)$-atoms such
that $\wz{\az}=\sum_i \kappa_ib_i$ in $L^2(\rn)$, and
\begin{equation}\label{eq4.30}
\sum_i |\kappa_i|^p\ls1.
\end{equation}
Indeed, if \eqref{eq4.29} and \eqref{eq4.30} hold true, then, by \eqref{eq4.28},
we find that, for any $j\in\nn$, there exists a function $\wz{\az}_j$ on $\rn$ such that
$\wz{\az}_j|_{\boz}=\az_j$. Let
$$\wz{f}:=\sum_{j\in\nn} \lz_j \wz{\az}_j.$$
Then $\wz{f}|_{\boz}=f$. Furthermore, from \eqref{eq4.30}, we deduce that there exist a sequence
$\{\kappa_{j,\,i}\}_{j\in\nn,\,i}\subset\cc$ and a sequence $\{b_{j,\,i}\}_{j\in\nn,\,i}$ of
$(p,\,2,\,0)$-atoms such that
$$\wz{f}=\sum_{j\in\nn} \sum_{i} \lz_j\kappa_{j,\,i}b_{j,\,i}$$
and
\begin{equation*}
\sum_{j\in\nn,\,i}\lf|\lz_j\kappa_{j,\,i}\r|^p\ls\sum_{j\in\nn}\lf|\lz_j\r|^p.
\end{equation*}
By this, Lemma \ref{l2.2}, \eqref{eq4.25}, and \eqref{eq4.27}, we conclude that $\wz{f}\in H^p(\rn)$ and
\begin{equation*}
\lf\|\wz{f}\r\|_{H^p(\rn)}\sim\lf\|\wz{f}\r\|_{H^{p,\,2,\,0}_{\mathrm{at}}(\rn)}
\ls\lf\|f\r\|_{H^p_{L_D}(\boz)}.
\end{equation*}
Thus, $f\in H^p_r(\boz)$ and
$$\|f\|_{H^p_r(\boz)}\ls\|f\|_{H^p_{L_D}(\boz)},$$
which, combined with the arbitrariness of $f\in H^p_{L_D}(\boz)\cap L^2(\boz)$,
implies that \eqref{eq4.22} holds true.

Next, we prove \eqref{eq4.29} and \eqref{eq4.30} by considering the following two cases on the ball
$B:=B(x_B,r_B)$, with $x_B\in\rn$ and $r_B\in(0,\fz)$, which appears in the support of $a$.

\emph{Case 1)} $8B\cap\boz^\complement\neq\emptyset$.
In this case, let $\wz{S}_0(B_\boz):=\cup_{j=0}^2S_j(B_\boz)$ and
$$J_{\boz}:=\lf\{k\in\nn:\ k\ge3,\,|S_k(B_\boz)|>0\r\}.$$
Moreover, assume that $\mathbf{1}_0:=\mathbf{1}_{\wz{S}_0(B_\boz)}$,
$m_0:=\int_{\wz{S}_0(B_\boz)}\az(x)\,dx$,
and, for any $k\in J_{\boz}$, $\mathbf{1}_k:=\mathbf{1}_{S_k
(B_\boz)}$ and $m_k:=\int_{S_k(B_\boz)}\az(x)\,dx.$
Then
\begin{equation*}
\az=\az\mathbf{1}_0+\sum_{k\in J_{\boz}}\az\mathbf{1}_k
\end{equation*}
holds true almost everywhere and also in $L^2(\boz)$. From the assumption $8B\cap\boz^\complement\neq\emptyset$,
it follows that there exists a $y_B\in\partial\boz$ such that $B(y_B,16r_B)\supset 8B$. By this and
Lemma \ref{l2.1}(iii), we find that there exists a ball $\wz{B}\subset\boz^\complement$ such that
$r_{\wz{B}}\sim r_B$ and $\dist(B_\boz,\wz{B})\sim r_B$. Therefore, there exists a ball $B^{\ast}_0\subset\rn$
such that $(B\cup\wz{B})\subset B^{\ast}_0$ and
\begin{equation}\label{eq4.31}
r_{B^{\ast}_0}\sim r_B.
\end{equation}

Let
$$b_0:= \az \mathbf{1}_0
-\lf[\frac{1}{|\wz{B}|}\int_{\wz{S}_{(B_\boz)}}
\az(x)\,dx\r]\mathbf{1}_{\wz{B}}.$$
Then $\int_{\rn}b_0(x)\,dx=0$ and $\supp(b_0)\subset B^{\ast}_0$.
Moreover, it is known that $\|\alpha\|_{L^2(\Omega)}
\ls\|a\|_{T^2(\Omega\times(0,\infty))}$ (see, for instance, \cite[Proposition 4.5]{bckyy13b}),
which further implies that
\begin{equation}\label{eq4.32}
\lf\|\alpha\r\|_{L^2(\Omega)}
\ls\lf\|a\r\|_{T^2(\Omega\times(0,\infty))}
\ls\lf|B_\Omega\r|^{1/2-1/p}.
\end{equation}
From this, $r_{\wz{B}}\sim r_B$, Lemma \ref{l2.1}(i), and \eqref{eq4.31}, we deduce that
\begin{align*}
\|b_0\|_{L^2(\rn)}&\le\lf\|\az \r\|_{L^2(\boz)}
+\lf\|\az \r\|_{L^2(\boz)}\lf|\wz{B}\r|^{-1/2}|B|^{1/2}\\ \nonumber
&\ls\lf\|\az\r\|_{L^2(\boz)}\ls|B_\boz|^{1/2-1/p}
\sim\lf|B^\ast_0\r|^{1/2-1/p}.
\end{align*}
Therefore, $b_0$ is a harmless constant multiple of a $(p,\,2,\,0)$-atom.

For any $k\in J_\boz$, by the definition of $J_\boz$, we find that $2^{k}r_B\le\diam(\boz)$ and
$2^kB\cap\partial\boz\neq\emptyset$. By this, we conclude that there exists a $y_B\in\partial\boz$
such that $B(y_B,2^{k+1}r_B)\supset 2^kB$, which, together with Lemma \ref{l2.1}(iii), further implies
that there exists a ball $\wz{B}_k\subset\boz^\complement$ such that $r_{\wz{B}_k}\sim2^kr_B$
and $\dist(S_k(B_\boz),\wz{B}_k)\ls 2^{k}r_B$. Let the ball $B^\ast_k\subset\rn$ satisfy that $\wz{B}_k\cup
S_k(B_\boz)\subset B^\ast_k$ and $r_{B^\ast_k}\sim r_{2^kB}$. Assume that, for any $k\in J_\boz$,
$$
b_k:=\az \mathbf{1}_k-\lf[\frac{1}{|\wz{B}_k|}\int_{S_k(B_\boz)}\az(x)\,dx\r]\mathbf{1}_{\wz{B}_k}.
$$
Then, for any $k\in J_\boz$, $\supp(b_k)\subset B^\ast_k$ and $\int_\rn b_k(x)\,dx=0$. Furthermore,
from the H\"older inequality and Lemma \ref{l2.1}(i), it follows that
\begin{align}\label{eq4.33}
\|b_k\|_{L^2(\rn)}&\le\lf\|\az\r\|_{L^2(S_k(B_\boz))}
+\lf\|\az \r\|_{L^2(S_k(B_\boz))}\lf|\wz{B}_k\r|^{-1/2}\lf|S_k(B_\boz)\r|^{1/2}\\ \nonumber
&\ls\lf\|\az\r\|_{L^2(S_k(B_\boz))}.
\end{align}
Moreover, we have, for any $k\in J_\boz$ with $k\ge3$, and any $x\in S_k(B_\boz)$,
\begin{align}\label{eq4.34}
\lf|\az(x)\r|&\ls\int_0^{r_B}\int_{B_\boz}\lf|H^{L_D}_{t^2}(x,y)\r|
|a(y,t)|\,\frac{dy\,dt}{t}\\ \nonumber
&\ls\int_0^{r_B}\int_{B_\boz}\frac{1}{t^n}e^{-\frac{|x-y|^2}{ct^2}}
|a(y,t)|\frac{dy\,dt}{t} \\ \nonumber
&\ls\|a\|_{T^2(\boz\times(0,\fz))} \lf\{\int_0^{r_B}\int_{B_\boz}\frac{t^{2}}{|x-y|
^{2(n+1)}}\frac{dy\,dt}{t}\r\}^{1/2}\\ \nonumber
&\ls|x-x_B|^{-(n+1)}r_B |B_\boz|^{1/2}\|a\|_{T^2(\boz\times(0,\fz))}\ls2^{-k(n+1)}|B_\boz|^{-1/p},
\end{align}
which, together with \eqref{eq4.33}, further implies that
\begin{align}\label{eq4.35}
\|b_k\|_{L^2(\rn)}&\ls2^{-(n/2+1)k}|B_\boz|^{1/2-1/p}
\ls2^{-s_0k}\lf|B^\ast_k\r|^{1/2-1/p},
\end{align}
where $s_0:=n(1+\frac{1}{n}-\frac1p)$.
By this, $\supp(b_k)\subset B^\ast_k$, and $\int_\rn b_k(x)\,dx=0$, we conclude that
$2^{s_0k}b_k$ is a harmless constant multiple of a $(p,\,2,\,0)$-atom.
Let
\begin{equation*}
\wz{\az}:=b_0+\sum_{k\in J_\boz}2^{-s_0k}(2^{s_0k}b_k).
\end{equation*}
Then $\wz{\az}\big|_{\boz}=\az$, $\wz{\az}\in H^p(\rn)$, and $\|\wz{\az}\|_{H^p(\rn)}\ls1$.

\emph{Case 2)} $8B\subset\boz$.
In this case, let $k_0\in\nn$ be such
that $2^{k_0} B\subset\boz$ but $(2^{k_0
+1}B)\cap\paz\boz\neq\emptyset$. Then $k_0\ge3$. Let
$$J_{\boz,\,k_0}:=\lf\{k\in\nn:\  k\ge k_0 +1,\,|S_k(B_\boz)|>0\r\}.$$
For any $k\in\zz_+$, let $\mathbf{1}_k:=\mathbf{1}_{S_k(B_\boz)}$,
$$m_k :=\int_{S_k(B_\boz)}\az(x)\,dx,$$
$M_k:=\az \mathbf{1}_k-
m_k|S_k(B_\boz)|^{-1}\mathbf{1}_k$, and $\wz{M}_k:=
\az \mathbf{1}_k$. Then
\begin{equation*}
\az =\sum_{k=0}^{k_0}M_k +\sum_{k\in
J_{\boz,k_0}}\wz{M}_k+\sum_{k=0}^{k_0}m_k|S_k(B_\boz)|^{-1}\mathbf{1}_k.
\end{equation*}
For any $k\in\{0,\,\ldots,\,k_0\}$, from the definition of $M_k$, we deduce that
$$\int_{\rn}M_k(x)\,dx=0\ \quad \text{and}\ \quad \supp(M_k)\subset2^{k+1}B.$$
Moreover, if $k=0$, by the H\"older inequality and \eqref{eq4.32}, we find that
\begin{equation}\label{eq4.36}
\|M_0\|_{L^2(\rn)}\ls\lf\|\az \r\|_{L^2(\boz)}\ls\|\az\|_{L^2(\boz)}\ls|B|^{1/2-1/p}
\end{equation}
and, if $k\in\{1,\,\ldots,\,k_0\}$, from \eqref{eq4.34}, it follows that
\begin{equation}\label{eq4.37}
\|M_k\|_{L^2(\rn)}\ls\lf\|\az\r\|_{L^2(S_k(B_\boz))}\ls2^{-s_0k}\lf|2^{k+1}B\r|^{1/2-1/p},
\end{equation}
where $s_0$ is as in \eqref{eq4.35}. Thus, for any $k\in\{0,\,\ldots,\,k_0\}$,
$2^{s_0k}M_k$ is a harmless constant multiple of a $(p,\,2,\,0)$-atom.

For any $k\in J_{\boz,\,k_0}$, by the definitions of both $k_0$ and $J_{\boz,\,k_0}$,
we have $2^{k}r_B\le\diam(\boz)$ and $2^kB\cap\partial\boz\neq\emptyset$. By this, we conclude that there
exists a $y_B\in\partial\boz$ such that $B(y_B,2^{k+1}r_B)\supset 2^kB$,
which, combined with Lemma \ref{l2.1}(iii), implies that there exists a ball
$\wz{B}_k\subset\boz^\complement$ such that $r_{\wz{B}_k}\sim 2^kr_{B}$
and $\dist(S_k(B_\boz),\wz{B}_k)\ls 2^kr_{B}$. Then there exists a ball $B^\ast_k$ such that
$\wz{B}_k\cup S_k(B_\boz)\subset B^\ast_k$ and $r_{B^\ast_k}\sim 2^kr_{B}$.
Let
$$a_k:=\az \mathbf{1}_{S_k(B_\boz)}-
\lf[\frac{1}{|\wz{B}_k|}\int_{S_k(B_\boz)}
\az(x)\,dx\r]\mathbf{1}_{\wz{B}_k}.
$$
Then $\supp(a_k)\subset B^\ast_k$ and $\int_\rn a_k(x)\,dx=0$.
Moreover, from \eqref{eq4.34} and Lemma \ref{l2.1}(i), we deduce that
\begin{equation}\label{eq4.38}
\|a_k\|_{L^2(\rn)}\ls\lf\|\az\r\|_{L^2(S_k(B_\boz))}\ls2^{-s_0k}\lf|B^\ast_k\r|^{1/2-1/p}.
\end{equation}
Thus, for any $k\in J_{\boz,\,k_0}$, $2^{s_0k}a_{k}$ is a harmless constant
multiple of a $(p,\,2,\,0)$-atom. For any $j\in\{0,\,\ldots,\,k_0\}$, let $N_j:=\sum_{k=j}^{k_0}m_k$.
It is easy to see that
\begin{equation*}
\sum_{k=0}^{k_0}m_k|S_k(B_\boz)|^{-1}\mathbf{1}_k=\sum_{k=1}^{k_0}\lf[|S_k(B_\boz)|^{-1}\mathbf{1}_k
-|S_{k-1}(B_\boz)|^{-1}\mathbf{1}_{k-1}\r]N_k +N_0|2B|^{-1}\mathbf{1}_0.
\end{equation*}
For any $k\in\{1,\,\ldots,\,k_0\}$, by
$$\lf|\lf|S_k(B_\boz)\r|^{-1}\mathbf{1}_k-\lf|S_{k-1}(B_\boz)\r|^{-1}\mathbf{1}_{k-1}\r|\ls\lf|2^k B\r|^{-1},$$
the H\"older inequality, and \eqref{eq4.34}, we find that
\begin{align}\label{eq4.39}
&\lf\|\lf[|S_k(B_\boz)|^{-1}\mathbf{1}_k-|S_{k-1}(B_\boz)|^{-1}\mathbf{1}_{k-1}\r]N_{k}
\r\|_{L^2(\rn)}\\ \nonumber
&\hs\ls|2^kB|^{-1/2}|N_k|\ls|2^kB|^{-1/2}\lf[\sum_{j=k}^{k_0}\lf\|\az \r\|_{L^2(S_j(B_\boz))}
|S_j(B_\boz)|^{1/2}\r]\\ \nonumber
&\hs\ls|2^kB|^{-1/2}\lf[\sum_{j=k}^{k_0}2^{-(n+1)j}|B|^{1/2-1/p}
\lf|2^jB\r|^{1/2}\r]\ls2^{-s_0k}\lf|2^kB\r|^{1/2-1/p},
\end{align}
where $s_0$ is as in \eqref{eq4.35}, which, together with
$$\int_{\rn}\lf[|S_k(B_\boz)|^{-1}\mathbf{1}_k(x)-|S_{k-1}(B_\boz)|^{-1}
\mathbf{1}_{k-1}(x)\r]\,dx=0$$
and $\supp(|S_k(B_\boz)|^{-1}\mathbf{1}_k-|S_{k-1}(B_\boz)|^{-1}\mathbf{1}_{k-1})
\subset2^{k}B$, implies that, for any $k\in\{1,\,\ldots,\,k_0\}$,
the function $2^{s_0k}[|S_k(B_\boz)|^{-1}\mathbf{1}_k-|S_{k-1}(B_\boz)|^{-1}\mathbf{1}_{k-1}]N_k$
is a harmless constant multiple of a $(p,\,2,\,0)$-atom.

Finally, we deal with $N_0|2B|^{-1}\mathbf{1}_0$. From $2^{k_0-1}r_0<\dist(x_0,\partial\boz)\le2^{k_0}r_0,$
it follows that there exist a positive integer $K$ and a sequence $\{B_{0,\,i}\}_{i=1}^{K}$ of balls such that
\vspace{-0.5em}
\begin{enumerate}
  \item[(i)] $K\sim2^{k_0}$;
  \vspace{-0.5em}
  \item[(ii)] for any $i\in\{1,\,\ldots,\,K\}$, $r_{B_{0,\,i}}=2r_0$ and
$B_{0,\,i}\subset\boz$;
\vspace{-0.5em}
  \item[(iii)] for any $i\in\{1,\,\ldots,\,K-1\}$, $B_{0,\,i}\cap
B_{0,\,i+1}\neq\emptyset$ and
$\dist(B_{0,\,i},\partial\boz)\ge\dist(B_{0,\,i+1},\partial\boz)$;
\vspace{-0.5em}
  \item[(iv)] $2B_{0,\,K}\cap\paz\boz\neq\emptyset$.
\end{enumerate}
\vspace{-0.5em}
\noindent By Lemma \ref{l2.1}(ii), we conclude that there exists a ball $B_{0,\,K+1}\subset\boz^\complement$
such that $r_{B_{0,\,K+1}}\sim r_0$ and $\dist(B_{0,\,K},B_{0,\,K+1})\sim r_0$. Let
$$a_{0,\,1}:=N_0|2B|^{-1}\mathbf{1}_0-N_0|B_{0,\,1}|^{-1}
\mathbf{1}_{B_{0,\,1}}$$
and
$$a_{0,\,i}:=N_0|B_{0,\,i-1}|^{-1}\mathbf{1}_{B_{0,\,i-1}}-
N_0|B_{0,\,i}|^{-1}\mathbf{1}_{B_{0,\,i}}$$
with $i\in\{2,\,\ldots,\,K+1\}$. Obviously, for any $i\in\{1,\,\ldots,\,K+1\}$, from the definition
of $a_{0,\,i}$, we deduce that $\int_{\rn}a_{0,\,i}(x)\,dx=0$ and there exists a ball
$B^{\ast}_{0,\,i}\subset\rn$ such that $\supp(a_{0,\,i})\subset B^{\ast}_{0,\,i}$ and
\begin{equation}\label{eq4.40}
r_{B^{\ast}_{0,\,i}}\sim r_B.
\end{equation}
Moreover, similarly to the estimation of \cite[(3.66)]{yy13}, we have
\begin{align}\label{eq4.41}
|N_0|\ls2^{-\frac{n+1}{n}k_0}|B|^{1-1/p}.
\end{align}

For any $i\in\{1,\,\ldots,\,K+1\}$, by the definition of
$a_{0,\,i}$, \eqref{eq4.40}, and \eqref{eq4.41}, we conclude that
\begin{equation}\label{eq4.42}
\lf\|a_{0,\,i}\r\|_{L^2(\rn)}\ls|N_0||B|^{-1/2}\ls2^{-\frac{n+1}{n}k_0}
|B|^{1/2-1/p}\sim2^{-\frac{n+1}{n}k_0}\lf|B^{\ast}_{0,\,i}\r|^{1/2-1/p},
\end{equation}
which, combined with the facts that $\int_{\rn}a_{0,\,i}(x)\,dx=0$
and $\supp(a_{0,\,i})\subset B^{\ast}_{0,\,i}$, further implies that
$2^{\frac{n+1}{n}k_0}a_{0,\,i}$ is a harmless constant multiple of a $(p,\,2,\,0)$-atom.
Let
\begin{equation*}
\wz{\az}:= \sum_{k=1}^{k_0}M_k +\sum_{k\in
J_{\boz,\,k_0}}a_k+\sum_{k=1}^{k_0}\lf[|S_k(B_\boz)|^{-1}\mathbf{1}_k
-|S_{k-1}(B_\boz)|^{-1}\mathbf{1}_{k-1}\r]N_k+\sum_{i=1}^{K+1}a_{0,\,i}.
\end{equation*}
It is easy to find that $\wz{\az}|_{\boz}=\az$.
Moreover, from \eqref{eq4.36}, \eqref{eq4.37}, \eqref{eq4.38}, \eqref{eq4.39}, and
\eqref{eq4.42}, it follows that $\wz{\az}$ has the following atomic decomposition
\begin{align*}
\wz{\az}&=\sum_{k=1}^{k_0}2^{-s_0k}\lf(2^{s_0k}M_k\r) +\sum_{k\in
J_{\boz,\,k_0}}2^{-s_0k}\lf(2^{s_0k}a_k\r)\\
&\quad+\sum_{k=1}^{k_0}2^{-s_0k}\lf\{2^{s_0k}\lf[|S_k(B_\boz)|^{-1}\mathbf{1}_k
-|S_{k-1}(B_\boz)|^{-1}\mathbf{1}_{k-1}\r]N_k\r\}\\
&\quad+\sum_{i=1}^{K+1}2^{-\frac{n+1}{n}k_0}\lf[2^{\frac{n+1}{n}k_0}a_{0,\,i}\r]
\end{align*}
and
\begin{align*}
&\sum_{k=1}^{k_0}2^{-s_0kp}+\sum_{k\in J_{\boz,\,k_0}}2^{-s_0kp}+\sum_{k=1}^{k_0}2^{-s_0kp}
+\sum_{i=1}^{K+1}2^{-\frac{n+1}{n}k_0p}\\ \nonumber
&\quad\ls\sum_{k=1}^{\fz}2^{-s_0kp}+2^{k_0}2^{-\frac{n+1}{n}k_0p}\ls1,
\end{align*}
which further implies that $\wz{\az}\in H^p(\rn)$ and $\|\wz{\az}\|_{H^p(\rn)}\ls1$.
This finishes the proofs of both \eqref{eq4.29} and \eqref{eq4.30}, and hence of \eqref{eq4.22}.

By \eqref{eq4.9} and \eqref{eq4.22}, we obtain
\begin{equation*}
\lf[H^p_{L_D}(\boz)\cap L^2(\boz)\r]=\lf[H^p_{r}(\boz)\cap L^2(\boz)\r],
\end{equation*}
which, together with the fact that $H^p_{L_D}(\boz)\cap L^2(\boz)$ and $H^p_{r}(\boz)\cap L^2(\boz)$ are dense,
respectively, in the spaces $H^p_{L_D}(\boz)$ and $H^p_{r}(\boz)$, and a density argument,
implies that $H^p_{L_D}(\boz)$ and $H^p_{r}(\boz)$ coincide with equivalent quasi-norms.
This finishes the proof of Theorem \ref{t1.2}.
\end{proof}

\section{Proof of Theorem \ref{t1.3}}\label{s5}

 In this section, we prove Theorem \ref{t1.3}. We begin with establishing the following
estimates for the kernels of the family $\{(tL_D)^ke^{-tL_D}\}_{t>0}$ of operators.

\begin{lemma}\label{l5.1}
Let $n\ge2$, $\boz\subset\rn$ be a bounded {\rm NTA} domain, the real-valued, bounded, and measurable matrix $A$
satisfy \eqref{eq1.3}, and $L_D$ be as in \eqref{eq1.4}. Assume that $p_0\in(2,\fz)$ is as in Lemma
\ref{l3.6} and $q\in(2,p_0)$. For any given $k\in\nn$, denote by $\{K_{t,\,k}^{L_D}\}_{t>0}$ the kernels
of the family $\{(tL_D)^ke^{-tL_D}\}_{t>0}$ of operators. Then there exist positive constants $C$
and $c$, depending on $n,\,p$, $k$, and $\boz$, such that, for any $r\in(0,\diam(\boz))$, $y\in\boz$,
and $t\in(0,\fz)$,
$$
\lf[\int_{\{x\in\boz:\ r\le|x-y|\le2r\}}\lf|\nabla_xK_{t,\,k}^{L_D}(x,y)\r|^q\,dx\r]^{1/q}\le
Cr^{\frac{n}{q}-1}t^{-\frac{n}{2}}e^{-\frac{r^2}{ct}}.
$$
\end{lemma}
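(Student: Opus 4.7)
The plan is to reduce the desired annular $L^q$ gradient estimate on $u(\cdot):=K^{L_D}_{t,k}(\cdot,y)$ to the global $L^q$ Dirichlet regularity estimates of Lemmas \ref{l3.6} and \ref{l3.7}, applied to a localization $v:=u\phi$ for a suitable cutoff $\phi$ near the target annulus $\{x\in\boz:\,r\le|x-y|\le 2r\}$, while pointwise Gaussian bounds on $u$ and on $L_Du$ supply the factor $e^{-r^2/(ct)}$.

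First, I would record the Gaussian upper bound $|K^{L_D}_{t,k}(x,y)|\ls t^{-n/2}\exp\{-|x-y|^2/(ct)\}$ for each fixed $k\in\nn$, which follows from \eqref{eq1.5} and the holomorphic semigroup structure by the kind of contour-integration argument carried out in Section \ref{s3}. Since $L_Du(\cdot)=\frac{1}{t}K^{L_D}_{t,k+1}(\cdot,y)$ for $y$ and $t$ fixed, the same bound, augmented by a factor $1/t$, controls $L_Du$. A Caccioppoli-type argument, obtained by testing the weak equation $L_Du=\frac{1}{t}K^{L_D}_{t,k+1}(\cdot,y)$ against $u\phi^2$ for a slightly enlarged cutoff $\phi\in C^\fz_{\mathrm c}(\rn)$ with $\phi\equiv 1$ on $\{r\le|\cdot-y|\le 2r\}$, $\supp(\phi)\subset\{r/2\le|\cdot-y|\le 3r\}$, and $|\nabla\phi|\ls 1/r$, together with these two pointwise bounds, then yields
\begin{equation*}
\lf\|\nabla u\r\|_{L^2(\supp\phi)}\ls r^{n/2-1}t^{-n/2}\exp\lf\{-\frac{r^2}{ct}\r\},
\end{equation*}
once polynomial factors of $r^2/t$ have been absorbed into a slightly weaker Gaussian exponent.

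Next, for $v:=u\phi\in W^{1,2}_0(\boz)$, a direct product-rule computation gives, in the weak sense,
\begin{equation*}
-\dive(A\nabla v)=\lf[\phi L_Du-(A^\ast\nabla\phi)\cdot\nabla u\r]-\dive\lf(uA\nabla\phi\r).
\end{equation*}
Splitting $v=v_1+v_2$ according to the two pieces of the right-hand side and applying Lemma \ref{l3.6} to $v_1$ and Lemma \ref{l3.7} to $v_2$ yields
\begin{equation*}
\|\nabla v\|_{L^q(\boz)}\ls\lf\|\phi L_Du\r\|_{L^{q_\ast}(\boz)}+\lf\|(A^\ast\nabla\phi)\cdot\nabla u\r\|_{L^{q_\ast}(\boz)}+\lf\|uA\nabla\phi\r\|_{L^q(\boz)}.
\end{equation*}
The Gaussian bounds on $u$ and $L_Du$, combined with $|\nabla\phi|\ls 1/r$ and $|\supp\phi|\ls r^n$, immediately give $\|\phi L_Du\|_{L^{q_\ast}}+\|uA\nabla\phi\|_{L^q}\ls r^{n/q-1}t^{-n/2}e^{-r^2/(ct)}$, using the identity $n/q_\ast-n/q=1$ and absorbing one factor of $r^2/t$ into the Gaussian for the first summand. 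Since $v\equiv u$ on $\{r\le|x-y|\le 2r\}$, this would complete the proof modulo the middle term.

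The main obstacle is precisely the middle term $\|(A^\ast\nabla\phi)\cdot\nabla u\|_{L^{q_\ast}(\boz)}$, because it again involves $\nabla u$, this time on the thin support of $\nabla\phi$. When $q_\ast\le 2$, H\"older's inequality combined with the $L^2$ bound from the Caccioppoli step handles it,
\begin{equation*}
\lf\|(A^\ast\nabla\phi)\cdot\nabla u\r\|_{L^{q_\ast}}\ls\frac{1}{r}\,r^{n/q_\ast-n/2}\lf\|\nabla u\r\|_{L^2}\ls r^{n/q-1}t^{-n/2}e^{-r^2/(ct)},
\end{equation*}
closing the argument. When $q_\ast>2$ (which can occur when $p_0>2n/(n-2)$ in low dimensions), I would iterate the whole cutoff scheme through a finite chain of exponents $2=q_0<q_1<\cdots<q_N=q$ with $q_{i,\ast}\le q_{i-1}$, run on a shrinking nested sequence of cutoffs adapted to annuli $\{c_ir\le|\cdot-y|\le C_ir\}$, so that at step $i$ the bad term is dominated by the $L^{q_{i-1}}$ bound produced at step $i-1$; since the Sobolev conjugate map strictly increases the exponent at each stage, only finitely many iterations (depending only on $n$ and $p_0$) are required before the final step at exponent $q$ concludes on the original annulus.
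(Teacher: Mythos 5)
Your proposal is correct and follows essentially the same route as the paper: localize $K^{L_D}_{t,k}(\cdot,y)$ by a cutoff adapted to a slightly larger annulus, use the product-rule identity to write a Dirichlet problem for the localized function, estimate the resulting right-hand sides via Lemmas \ref{l3.6} and \ref{l3.7} together with the pointwise Gaussian bounds, control the gradient term on the cutoff's support by an $L^2$ Caccioppoli/off-diagonal estimate (the paper invokes the argument of \cite[Proposition 16]{at01a}, which is the same estimate you derive), and iterate through a finite chain of exponents with nested annuli when the Sobolev-dual exponent exceeds $2$. The only imprecision is cosmetic: testing with $u\phi^2$ controls $\|\phi\nabla u\|_{L^2}$ rather than $\|\nabla u\|_{L^2(\supp\phi)}$, so the Caccioppoli cutoff should be taken slightly larger than the localization cutoff, exactly the nesting you already allow for in the iteration step.
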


\begin{proof}
Assume that $f\in L^2(\boz)$ and $u\in W^{1,2}_0(\boz)$ is a weak solution of
the Dirichlet boundary value problem \eqref{eq1.1}. Let $\eta\in C^\fz_{\rm c}(\rn)$. Then
\begin{equation}\label{eq5.1}
-\dive (A\nabla(u\eta))=-\dive (uA\nabla\eta)-A\nabla u\cdot\nabla\eta+f\eta
\end{equation}
in the sense of \eqref{eq1.8}. Indeed, for any $\fai\in C^\fz_{\rm c}(\boz)$,
\begin{align*}
&\int_\boz A(x)\nabla(u\eta)(x)\cdot\nabla\fai(x)\,dx\\
&\quad=\int_\boz A(x)\eta(x)\nabla u(x)\cdot\nabla\fai(x)\,dx
+\int_{\boz}A(x)u(x)\nabla\eta(x)\cdot\nabla\fai(x)\,dx\\ \nonumber
&\quad=\int_\boz A(x)\nabla u(x)\cdot\nabla(\eta\fai)(x)\,dx-
\int_\boz A(x)\nabla u(x)\cdot\nabla\eta(x)\fai(x)\,dx\\
&\quad\quad+\int_{\boz}A(x)u(x)\nabla\eta(x)\cdot\nabla\fai(x)\,dx\\ \nonumber
&\quad=\int_\boz f(x)\eta(x)\fai(x)\,dx-\int_\boz A(x)\nabla u(x)\cdot\nabla\eta(x)\fai(x)\,dx\\
&\quad\quad+\int_{\boz}A(x)u(x)\nabla\eta(x)\cdot\nabla\fai(x)\,dx,
\end{align*}
which implies that \eqref{eq5.1} holds true.

Let $t\in(0,\fz)$, $y\in\boz$, $r\in(0,\diam(\boz))$, $u_t:=K_{t,\,k}^{L_D}(\cdot,y)$,
and $f:=-\frac{d}{dt}K_{t,\,k}^{L_D}(\cdot,y)$. Then $L_D u_t=f$ in the sense of \eqref{eq1.8}.
Take $\eta\in C^\fz_{\rm c}(\rn)$ satisfying $\eta\equiv1$ on $\{x\in\rn:\ r\le|x-y|\le2r\}$,
$$\supp(\eta)\subset \lf\{x\in\rn:\ \frac{5r}{6}\le|x-y|\le\frac{13r}{6}\r\},$$
and $|\nabla\eta|\ls r^{-1}$. Assume that $q\in(2,p_0)$ and $p\in(1,n)$ satisfy
$\frac{1}{p}-\frac{1}{q}=\frac1n$. Then, by Lemmas \ref{l3.6} and \ref{l3.7}, and \eqref{eq5.1},
we conclude that
\begin{equation}\label{eq5.2}
\|\nabla(u_t \eta)\|_{L^q(\boz;\rn)}\ls\|u_tA\nabla\eta\|_{L^q(\boz;\rn)}+
\|A\nabla u_t\cdot\nabla\eta\|_{L^p(\boz)}+\|f\eta\|_{L^p(\boz)}.
\end{equation}
Let $S_1:=\{x\in\boz:\ r\le|x-y|\le2r\}$ and $S_2:=\{x\in\boz:\ \frac{5r}{6}\le|x-y|\le\frac{13r}{6}\}$.
Then, from \eqref{eq5.2}, we deduce that
\begin{equation}\label{eq5.3}
\|\nabla u_t\|_{L^q(S_1;\rn)}\ls r^{-1}\|u_t\|_{L^q(S_2)}+r^{-1}\|\nabla u_t\|_{L^p(S_2;\rn)}
+\lf\|\frac{du_t}{dt}\r\|_{L^p(S_2)}.
\end{equation}
We first assume that $p\le 2$. Similarly to the proof of \cite[Proposition 16]{at01a}, we have
$$
\|\nabla u_t\|_{L^2(S_2;\rn)}\ls t^{-\frac{1}{2}-\frac{n}{4}}\lf(\frac{r}
{t^{1/2}}\r)^{\frac{n-2}{2}}e^{-\frac{r^2}{ct}},
$$
which, together with the H\"older inequality, implies that
\begin{equation}\label{eq5.4}
r^{-1}\|\nabla u_t\|_{L^p(S_2;\rn)}\ls r^{-1}\|\nabla u_t\|_{L^2(S_2;\rn)}
|S_2|^{\frac{1}{p}-\frac{1}{2}}\ls r^{\frac{n}{q}-1}t^{-\frac{n}{2}}e^{-\frac{r^2}{ct}}.
\end{equation}
Furthermore, by \cite[Theorem 6.17]{o05} and \eqref{eq1.5}, we find that, for any $x\in\boz$,
\begin{equation}\label{eq5.5}
|u_t(x)|+t\lf|\frac{du_t(x)}{dt}\r|\ls t^{-\frac{n}{2}}e^{-\frac{|x-y|^2}{ct}},
\end{equation}
which further implies that
\begin{align*}
r^{-1}\|u_t\|_{L^q(S_2)}+\lf\|\frac{du_t}{dt}\r\|_{L^p(S_2)}
&\ls r^{-1}r^{\frac{n}{q}}t^{-\frac{n}{2}}e^{-\frac{r^2}{ct}}
+t^{-1}r^{\frac{n}{p}}t^{-\frac{n}{2}}e^{-\frac{r^2}{ct}}\\ \nonumber
&\ls r^{\frac{n}{q}-1}t^{-\frac{n}{2}}e^{-\frac{r^2}{ct}}.
\end{align*}
From this, \eqref{eq5.3}, and \eqref{eq5.4}, it follows that
\begin{equation}\label{eq5.6}
\|\nabla u_t\|_{L^q(S_1;\rn)}\ls r^{\frac{n}{q}-1}t^{-\frac{n}{2}}e^{-\frac{r^2}{ct}}.
\end{equation}
This finishes the proof of the present lemma in the case that $p\le2$.

When $p>2$, take $i_0\in\nn$ and $1<p_{i_0}\le2<p_{i_0-1}<\cdots <p_1:=p<q$
such that $\frac{1}{p_{i+1}}-\frac{1}{p_i}=\frac{1}{n}$ for any $i\in\{1,\,\ldots,\,i_0-1\}$.
Then, using a simple iteration argument, \eqref{eq5.3}, \eqref{eq5.5}, and \eqref{eq5.6},
we conclude that \eqref{eq5.6} also holds true in this case that $p>2$.
This finishes the proof of Lemma \ref{l5.1}.
\end{proof}

Moreover, to show Theorem \ref{t1.3}, we need the following uniform boundedness of
the family $\{\sqrt{t}\nabla e^{-tL_D}\}_{t>0}$ of operators on $L^p(\boz)$, whose proof is
similar to that of \cite[Proposition 22]{at98}; we omit the details here.

\begin{lemma}\label{l5.2}
Let $n\ge2$, $\boz\subset\rn$ be a bounded {\rm NTA} domain, the real-valued, bounded,
and measurable matrix $A$ satisfy \eqref{eq1.3}, and $L_D$ be as in \eqref{eq1.4}.
Then there exists an $\epz_0\in(0,\fz)$, depending only on $n$ and $\mu_0$, such that,
for any given $p\in(2-\epz_0,2+\epz_0)$, the family $\{\sqrt{t}\nabla
e^{-tL_D}\}_{t>0}$ of operators is uniformly bounded on $L^p(\boz)$.
\end{lemma}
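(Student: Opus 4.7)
The plan is to establish the uniform $L^2$-boundedness of $\{\sqrt{t}\,\nabla e^{-tL_D}\}_{t>0}$ as the starting point, then to upgrade it to a small symmetric $L^p$ window around $p=2$ via Gaffney-type off-diagonal estimates combined with the Gaussian heat kernel bound \eqref{eq1.5} and an extrapolation argument. The strategy mimics that of \cite{at98}, carried out in the bounded NTA domain setting using Lemma \ref{l2.1}(i) to supply the doubling property.

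First I would obtain the $L^2$-estimate. Fix $f\in L^2(\boz)$ and $t\in(0,\fz)$, and set $u_t:=e^{-tL_D}f$. The ellipticity \eqref{eq1.3} combined with the identity $(L_Du_s,u_s)=-\frac12\frac{d}{ds}\|e^{-sL_D}f\|_{L^2(\boz)}^2$ yields
$$\mu_0\lf\|\nabla u_t\r\|_{L^2(\boz;\rn)}^2\le (L_Du_t,u_t)=(L_De^{-tL_D}f,e^{-tL_D}f),$$
and the Cauchy--Schwarz inequality together with the analyticity estimate $\|tL_De^{-tL_D}\|_{L^2(\boz)\to L^2(\boz)}\ls 1$ (a standard consequence of the sectoriality of $L_D$ on $L^2(\boz)$) gives $\|\sqrt{t}\,\nabla e^{-tL_D}f\|_{L^2(\boz;\rn)}\ls\|f\|_{L^2(\boz)}$ uniformly in $t\in(0,\fz)$.

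Next I would establish Gaffney-type $L^2$ off-diagonal decay: for any measurable subsets $E,F\subset\boz$ with $d:=\dist(E,F)>0$ and any $f\in L^2(\boz)$ with $\supp(f)\subset E$,
$$\lf\|\sqrt{t}\,\nabla e^{-tL_D}f\r\|_{L^2(F;\rn)}\ls e^{-cd^2/t}\|f\|_{L^2(E)},$$
with positive constant $c$ depending only on $n$ and $\mu_0$. The proof is the classical weighted energy method: test the heat equation $\paz_s u_s+L_D u_s=0$ against $e^{2\az\eta}u_s$, where $\eta$ is a bounded Lipschitz function with $|\nabla\eta|\le 1$ approximating the signed distance to $E$, and optimize $\az\sim d/t$. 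Since $u_s\in W^{1,2}_0(\boz)$ for every $s>0$, the weighted integration by parts is legitimate and the interior computation is identical to the Euclidean one.

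Finally I would upgrade from $L^2$ to $L^p$ via extrapolation. Combining the Gaffney estimate above with the Gaussian upper bound \eqref{eq1.5} for $\{K^{L_D}_t\}_{t>0}$, and using the semigroup factorization $\sqrt{t}\,\nabla e^{-tL_D}=\bigl(\sqrt{t/2}\,\nabla e^{-(t/2)L_D}\bigr)\,e^{-(t/2)L_D}$, one obtains for every $q\in(2,\fz)$ an $L^2$-to-$L^q$ off-diagonal bound of the form
$$\lf\|\sqrt{t}\,\nabla e^{-tL_D}f\r\|_{L^q(F;\rn)}\ls|F|^{1/q-1/2}e^{-cd^2/t}\|f\|_{L^2(E)}.$$
Applying the Blunck--Kunstmann / Auscher--Martell extrapolation machinery for families failing Calder\'on--Zygmund theory but having off-diagonal decay on a doubling space (the doubling property of $(\boz,|\cdot|,dx)$ being supplied by Lemma \ref{l2.1}(i)), one concludes that $\sqrt{t}\,\nabla e^{-tL_D}$ is uniformly bounded on $L^p(\boz)$ for every $p\in(2,2+\epz_0)$ for some $\epz_0\in(0,\fz)$ depending only on $n$ and $\mu_0$; the range $p\in(2-\epz_0,2)$ is then recovered by a duality argument applied to the adjoint operator $L_D^\ast=-\dive(A^\ast\nabla\cdot)$, which satisfies \eqref{eq1.3} with the same constant $\mu_0$. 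The hard part is the $p>2$ extrapolation step: one must verify the hypotheses of the extrapolation theorem (in particular the correct volume normalization in the off-diagonal bounds) uniformly for all balls $B_\boz(x,r)$ of $\boz$, for which the uniform doubling and the Gaussian upper bound \eqref{eq1.5} are exactly the inputs that substitute for the Euclidean ingredients used in \cite{at98}.
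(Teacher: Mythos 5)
Your first step (the uniform $L^2$ bound via ellipticity, analyticity and Cauchy--Schwarz) and your second step (the Davies--Gaffney $L^2$ off-diagonal estimates by the weighted energy method, legitimate here because $e^{-sL_D}f\in W^{1,2}_0(\boz)$) are correct and are indeed part of the standard argument behind \cite[Proposition 22]{at98}, which is all the paper itself invokes. The genuine gap is in your third step. The claimed $L^2\to L^q$ off-diagonal bound for \emph{every} $q\in(2,\fz)$ does not follow from the factorization $\sqrt{t}\,\nabla e^{-tL_D}=\bigl(\sqrt{t/2}\,\nabla e^{-(t/2)L_D}\bigr)e^{-(t/2)L_D}$: the smoothing factor $e^{-(t/2)L_D}$ acts \emph{first}, so the composition improves the integrability of the input (yielding $L^{q'}\to L^2$ or $L^1\to L^2$ off-diagonal bounds), while the outer gradient factor is only known to land in $L^2$; to end up in $L^q(F)$ with $q>2$ you would need exactly the $L^2\to L^q$ boundedness of the gradient family that is in question. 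Worse, if such bounds held for all $q\in(2,\fz)$, the extrapolation theorem you invoke would give uniform $L^p$ bounds for all $p\in(2,\fz)$, which is false for general real bounded measurable $A$ (Meyers-type examples; compare the example behind Remark \ref{r1.4}(i)). What the $p>2$ case actually requires --- and what the proof you are sketching is supposed to supply --- is a Caccioppoli inequality plus Gehring self-improvement (Meyers higher integrability) for $\nabla u$ when $u$ is a local weak solution of $(\partial_s+L_D)u=0$, both in interior balls and in boundary balls, the boundary case using the Dirichlet condition via zero extension together with the measure density of $\boz^\complement$ from Lemma \ref{l2.1}(iii); this produces the reverse H\"older/off-diagonal input for a single exponent $q=2+\epsilon(n,\mu_0)$, i.e. the parabolic analogue of Lemma \ref{l3.8}, and only then does the Shen/Auscher--Martell type good-$\lambda$ argument yield $p\in(2,2+\epz_0)$. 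This Meyers-type ingredient is absent from your proposal.

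The duality reduction for $p<2$ is also not valid as stated: the adjoint of $\sqrt{t}\,\nabla e^{-tL_D}$ acting on $L^p(\boz)$ is $\sqrt{t}\,e^{-tL_D^{\ast}}\dive$ acting on $L^{p'}(\boz;\rn)$, not $\sqrt{t}\,\nabla e^{-tL_D^{\ast}}$, so $L^{p'}$ bounds for the gradient family of $L_D^{\ast}$ with $p'>2$ give no direct information about $L^p$ bounds for the gradient family of $L_D$ with $p<2$ (in the $L^p$ theory of such semigroups the two ranges are not dual to each other). The range $p\in(2-\epz_0,2)$ needs its own argument, for instance a weak-type $(1,1)$/Calder\'on--Zygmund criterion in the spirit of Coulhon--Duong and Blunck--Kunstmann, run on the doubling space $(\boz,|\cdot|,dx)$ using the Gaussian bound \eqref{eq1.5} together with your Gaffney estimates, followed by interpolation with the $L^2$ bound. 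With these two repairs --- Meyers higher integrability at the correct (single) exponent for $p>2$, and a weak-type argument rather than duality for $p<2$ --- your outline becomes the intended proof.
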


Now, we prove Theorem \ref{t1.3} by using Lemmas \ref{l3.6}, \ref{l5.1}, and \ref{l5.2}.

\begin{proof}[Proof of Theorem \ref{t1.3}]
We first show (i). By Lemma \ref{l3.6}, we find that the conclusion of (i)
holds true when $q\in(2,p_0)$. We assume that (ii) holds true, and use the conclusion of (ii)
to finish the proof of (i). Define a linear operator $\widetilde{\nabla L^{-1}_D}$ on $\rn$ as follows.
Let $p\in[1,n)$ and $q\in[\frac{n}{n-1},\fz)$ satisfy $\frac{1}{p}-\frac{1}{q}=\frac{1}{n}$.
For any $f\in L^p(\rn)$ when $p\in(1,n)$, or $f\in H^1(\rn)$, and any $x\in\rn$, let
\begin{equation*}
\widetilde{\nabla L^{-1}_D}(f)(x):=
\begin{cases}
\nabla L^{-1}_D(f|_\boz)(x)\ \ &\text{when}\ \ x\in\boz,\\
0 \ \ &\text{when}\ \ x\in\boz^\complement.
\end{cases}
\end{equation*}
Then, from Lemma \ref{l3.6}, it follows that, when $q\in(2,p_0)$,
\begin{equation}\label{eq5.7}
\lf\|\widetilde{\nabla L^{-1}_D}(f)\r\|_{L^q(\rn;\rn)}=\lf\|\nabla L^{-1}_D(f|_\boz)\r\|_{L^q(\boz;\rn)}
\ls\|f|_\boz\|_{L^p(\boz)}\ls\|f\|_{L^p(\rn)}.
\end{equation}
Thus, the operator $\widetilde{\nabla L^{-1}_D}$ is bounded from $L^p(\rn)$ to $L^q(\rn;\rn)$.
Moreover, by (ii), we conclude that
\begin{equation}\label{eq5.8}
\lf\|\widetilde{\nabla L^{-1}_D}(f)\r\|_{L^{\frac{n}{n-1}}(\rn;\rn)}=\lf\|\nabla L^{-1}_D
(f|_\boz)\r\|_{L^\frac{n}{n-1}(\boz;\rn)}\ls\|f|_\boz\|_{H^1_r(\boz)}\ls\|f\|_{H^1(\rn)},
\end{equation}
which further implies that $\widetilde{\nabla L^{-1}_D}$ is bounded from $H^1(\rn)$ to $L^{\frac{n}{n-1}}(\rn;\rn)$.
Recall that, for any $q\in(1,\fz)$, the Hardy space $H^q(\rn)$ is just the Lebesgue space $L^q(\rn)$.
From this, \eqref{eq5.7}, \eqref{eq5.8}, and the complex interpolation theory of Hardy spaces on $\rn$
(see, for instance, \cite[Theorem 8.1 and (9.3)]{kmm07}), we deduce that $\widetilde{\nabla L^{-1}_D}$
is bounded from $L^p(\rn)$ to $L^q(\rn;\rn)$ for any given $p\in(1,\frac{np_0}{n+p_0})$ and $q\in(\frac{n}{n-1},p_0)$
satisfying $\frac{1}{p}-\frac{1}{q}=\frac{1}{n}$, which, combined with the definition of $\widetilde{\nabla L^{-1}_D}$,
implies that the operator $\nabla L^{-1}_D$ is bounded from $L^p(\boz)$ to $L^q(\boz;\rn)$ for any given $p\in(1,\frac{np_0}{n+p_0})$
and $q\in(\frac{n}{n-1},p_0)$ satisfying $\frac{1}{p}-\frac{1}{q}=\frac{1}{n}$. This finishes the proof of (i).

Now, we prove (ii). Let $p\in(\frac{n}{n+1},1]$ and $q\in(1,\frac{n}{n-1}]$ satisfy $\frac{1}{p}-\frac{1}{q}=\frac{1}{n}$,
and $f\in H^p_{L_D}(\boz)\cap L^2(\boz)$. Take $1<p_1<2<q_1<\min\{p_0,2+\epz_0\}$ such that
$\frac{1}{q_1}-\frac{1}{p_1}=\frac{1}{q}-\frac{1}{p}$, where $p_0$ and $\epz_0$ are, respectively,
as in Lemmas \ref{l3.6} and \ref{l5.2}. Let $\epz\in(\frac{n}{p},\fz)$ and $M\in\nn\cap(\max\{\frac{n}{2p},
\frac{1+\epz}{2}\},\fz)$. Then there exist $\{\lz_j\}_{j=1}^\fz\subset\cc$ and a sequence $\{\az_j\}_{j=1}^\fz$ of $(p,\,q_1,\,M,\,\epz)_{L_D}$-molecules associated, respectively, with the balls $\{B_{\boz,\,j}\}_{j=1}^\fz$
such that
\begin{equation}\label{eq5.9}
f=\sum_{j=1}^\fz\lz_j\az_j
\end{equation}
in $L^2(\boz)$, and
\begin{equation}\label{eq5.10}
\|f\|_{H^p_{L_D}(\boz)}\sim\lf(\sum_{j=0}^\fz|\lz_j|^p\r)^{1/p},
\end{equation}
where, for any $j\in\nn$, $B_{\boz,\,j}:=B_j\cap\boz$, and $B_j:=B(x_j,r_j)$ with $x_j\in\boz$ and
$r_j\in(0,\diam(\boz))$ is a ball of $\rn$. To finish the proof of (ii), it suffices to prove that,
for any $(p,\,q_1,\,M,\,\epz)_{L_D}$-molecule $\az$ associated with the ball $B_\boz:=B\cap\boz$,
\begin{equation}\label{eq5.11}
\lf\|\nabla L^{-1}_D(\az)\r\|_{L^q(\boz;\rn)}\ls1.
\end{equation}
Indeed, if \eqref{eq5.11} holds true, then, by \eqref{eq5.9}, \eqref{eq5.10}, \eqref{eq5.11},
and $q>1\ge p$, we find that
\begin{align*}
\|\nabla u\|_{L^q(\boz;\rn)}&=\lf\|\nabla L^{-1}_D(f)\r\|_{L^q(\boz;\rn)}\le\sum_{j=1}^\fz|\lz_j|
\lf\|\nabla L^{-1}_D(\az_j)\r\|_{L^q(\boz;\rn)}\\
&\ls\sum_{j=0}^\fz|\lz_j|\ls\lf(\sum_{j=0}^\fz|\lz_j|^p\r)^{1/p}\sim\|f\|_{H^p_{L_D}(\boz)},
\end{align*}
which implies that (ii) holds true.

Next, we prove \eqref{eq5.11}. From Lemma \ref{l3.6}, it follows that $\nabla L^{-1}_D$ is bounded from
$L^{p_1}(\boz)$ to $L^{q_1}(\boz;\rn)$, which, together with the H\"older inequality and $\frac{1}{p_1}-
\frac{1}{q_1}=\frac{1}{p}-\frac{1}{q}$, implies that
\begin{align}\label{eq5.12}
\lf\|\nabla L^{-1}_D(\az)\r\|_{L^q(8B_\boz;\rn)}&\le\lf\|\nabla L^{-1}_D(\az)\r\|_{L^{q_1}
(8B_\boz;\rn)}\lf|8B_\boz\r|^{\frac{1}{q}-\frac{1}{q_1}}\\ \nonumber
&\ls\|\az\|_{L^{p_1}(8B_\boz)}|8B_\boz|^{\frac{1}{p}-\frac{1}{p_1}}\ls1.
\end{align}
For any $j\in\nn$ with $j\ge3$, let $\wz{S}_j(B_\boz):=(2^{j+3}B_\boz)\backslash(2^{j-3}B_\boz)$ and
$E_j(B_\boz):=\boz\backslash\wz{S}_j(B_\boz)$. Then, by the equation
$$\nabla L^{-1}_D(\az)=\int_0^\fz\nabla e^{-tL_D}(\az)\,dt
$$
and the Minkowski inequality, we conclude that
\begin{align}\label{eq5.13}
\lf\|\nabla L^{-1}_D(\az)\r\|_{L^q(S_j(B_\boz);\rn)}&\le\int_0^{r_B^2}\lf\|\nabla e^{-tL_D}(\az)
\r\|_{L^{q}(S_j(B_\boz);\rn)}\,dt+\int_{r_B^2}^\fz\cdots\\ \nonumber
&=:{\rm I}+{\rm II}.
\end{align}
For the term ${\rm I}$, we have
\begin{align}\label{eq5.14}
{\rm I}&=\int_0^{r_B^2}\lf\|\nabla e^{-tL_D}\lf(\az\mathbf{1}_{\wz{S}_j(B_\boz)}\r)\r\|_{L^{q}(S_j(B_\boz);\rn)}\,dt\\ \nonumber
&\quad+\int_0^{r_B^2}\lf\|\nabla e^{-tL_D}\lf(\az\mathbf{1}_{E_j(B_\boz)}\r)\r\|_{L^{q}(S_j(B_\boz);\rn)}\,dt\\ \nonumber
&=:{\rm I}_1+{\rm I}_2.
\end{align}
From Lemma \ref{l5.2}, the H\"older inequality, and $\frac{1}{p}-\frac{1}{q}=\frac{1}{n}$, we deduce that
\begin{align}\label{eq5.15}
{\rm I}_1&\ls\int_0^{r_B^2}\lf\|\nabla e^{-tL_D}\lf(\az\mathbf{1}_{\wz{S}_j(B_\boz)}\r)\r\|_{L^{q_1}(S_j(B_\boz);\rn)}
\lf|2^jB_\boz\r|^{\frac{1}{q}-\frac{1}{q_1}}\,dt\\ \nonumber
&\ls2^{-j\epz}\lf|2^jB_\boz\r|^{\frac{1}{q_1}}\lf|B_\boz\r|^{-\frac{1}{p}}\lf|2^jB_\boz\r|^{\frac{1}{q}-\frac{1}{q_1}}
\int_0^{r_B^2}t^{-1/2}\,dt\\ \nonumber
&\ls2^{-j(\epz-\frac{n}{q})}|B_\boz|^{-\frac{1}{n}}r_B\ls2^{-j(\epz-\frac{n}{q})}.
\end{align}
Moreover, by Lemma \ref{l5.1}, the Minkowski inequality, and $\frac{1}{p}-\frac{1}{q}=\frac{1}{n}$, we conclude that
\begin{align}\label{eq5.16}
{\rm I}_2&\ls\int_0^{r_B^2}\lf\|\nabla e^{-tL_D}\lf(\az\mathbf{1}_{E_j(B_\boz)}\r)\r\|_{L^{q_1}(S_j(B_\boz);\rn)}
\lf|2^jB_\boz\r|^{\frac{1}{q}-\frac{1}{q_1}}\,dt\\ \nonumber
&\ls\lf|2^jB_\boz\r|^{\frac{1}{q}-\frac{1}{q_1}}\int_0^{r_B^2}\int_{E_j(B_\boz)}\lf\|\nabla K_{t}^{L_D}
(\cdot,y)\r\|_{L^{q_1}(S_j(B_\boz);\rn)}|\az(y)|\,dydt\\ \nonumber
&\ls\lf|2^jB_\boz\r|^{\frac{1}{q}-\frac{1}{q_1}}\int_0^{r_B^2}(2^jr_B)^{\frac{n}{q_1}-1}t^{-\frac{n}{2}}
e^{-\frac{(2^jr_B)^2}{ct}}|B_\boz|^{1-\frac{1}{p}}\,dt\ls2^{-j(\epz-\frac{n}{q})}.
\end{align}
For the term ${\rm II}$, we have
\begin{align}\label{eq5.17}
{\rm II}&=\int_{r_B^2}^\fz\frac{1}{t^M}\lf\|\nabla(tL_D)^Me^{-tL_D}\lf(\lf(L^{-M}\az\r)
\mathbf{1}_{\wz{S}_j(B_\boz)}\r)\r\|_{L^{q}(S_j(B_\boz);\rn)}\,dt\\ \nonumber
&\quad+\int_{r_B^2}^\fz\frac{1}{t^M}\lf\|\nabla(tL_D)^Me^{-tL_D}\lf(\lf(L^{-M}_D\az\r)
\mathbf{1}_{E_j(B_\boz)}\r)\r\|_{L^{q}(S_j(B_\boz);\rn)}\,dt\\ \nonumber
&=:{\rm II}_1+{\rm II}_2.
\end{align}
From the facts that, for any $t\in(0,\fz)$,
$$\sqrt{t}\nabla(tL_D)^Me^{-tL_D}=
2^{M+\frac{1}{2}}\lf(\lf(\frac{t}{2}\r)^{1/2}\nabla e^{-\frac{t}{2}L_D}\r)
\lf(\frac{t}{2}L_D\r)^Me^{-\frac{t}{2}L_D},$$
$\{(\frac{t}{2}L_D)^Me^{-\frac{t}{2}L_D}\}_{t>0}$ is uniformly bounded on $L^s(\boz)$ for any
given $s\in[1,\fz)$ (see, for instance, \cite{o05}), and Lemma \ref{l5.2}, it follows that,
for any given $s\in(2-\epz_0,2+\epz_0)$, $\{\sqrt{t}\nabla(tL_D)^Me^{-tL_D}\}_{t>0}$ is uniformly bounded on $L^s(\boz)$.
By this and the H\"older inequality, we find that
\begin{align}\label{eq5.18}
{\rm II}_1&\ls\int_{r_B^2}^\fz\frac{1}{t^M}\lf\|\nabla (tL_D)^Me^{-tL_D}\lf(\lf(L^{-M}\az\r)
\mathbf{1}_{\wz{S}_j(B_\boz)}\r)\r\|_{L^{q_1}(S_j(B_\boz);\rn)}
\lf|2^jB_\boz\r|^{\frac{1}{q}-\frac{1}{q_1}}\,dt\\ \nonumber
&\ls2^{-j\epz}r_B^{2M}\lf|2^jB_\boz\r|^{\frac{1}{q_1}}|B_\boz|^{-\frac{1}{p}}
\lf|2^jB_\boz\r|^{\frac{1}{q}-\frac{1}{q_1}}\int_{r_B^2}^\fz t^{-(M+1/2)}\,dt\\ \nonumber
&\ls2^{-j(\epz-\frac{n}{q})}r^{2M}_B|B_\boz|^{-\frac{1}{n}}r_B^{-2M+1}\ls2^{-j(\epz-\frac{n}{q})}.
\end{align}
Moreover, from the H\"older inequality, the Minkowski inequality, and Lemma \ref{l5.1}, we deduce that
\begin{align}\label{eq5.19}
{\rm II}_2&\ls\int_{r_B^2}^\fz\frac{1}{t^M}\lf\|\nabla (tL_D)^Me^{-tL_D}\lf(\lf(L^{-M}_D\az\r)\mathbf{1}_{E_j(B_\boz)}\r)\r\|_{L^{q_1}(S_j(B_\boz);\rn)}
\lf|2^jB_\boz\r|^{\frac{1}{q}-\frac{1}{q_1}}\,dt\\ \nonumber
&\ls\lf|2^jB_\boz\r|^{\frac{1}{q}-\frac{1}{q_1}}\int_{r_B^2}^\fz\frac{1}{t^M}\int_{E_j(B_\boz)}
\lf\|\nabla K_{t,\,M}^{L_D}(\cdot,y)\r\|_{L^{q_1}(S_j(B_\boz);\rn)}\lf|L^{-M}_D\az(y)\r|\,dydt\\ \nonumber
&\ls\lf|2^jB_\boz\r|^{\frac{1}{q}-\frac{1}{q_1}}\int_{r_B^2}^\fz(2^jr_B)^{\frac{n}{q_1}-1}
t^{-(M+\frac{n}{2})}e^{-\frac{(2^jr_B)^2}{ct}}r_B^{2M}|B_\boz|^{1-\frac{1}{p}}\,dt\\ \nonumber
&\ls\lf|2^jB_\boz\r|^{\frac{1}{q}-\frac{1}{n}}r_B^{2M}|B_\boz|^{1-\frac{1}{p}}
\int_{r_B^2}^\fz t^{-(M+\frac{n}{2})}\lf[\frac{t}{(2^jr_B)^2}\r]^{M-1}\,dt\\ \nonumber
&\ls2^{-j(2M-1-\frac{n}{q})}\ls2^{-j(\epz-\frac{n}{q})}.
\end{align}
Thus, by \eqref{eq5.13} through \eqref{eq5.19}, we conclude that, for any $j\in\nn$ with $j\ge3$,
\begin{align}\label{eq5.20}
\lf\|\nabla L^{-1}_D(\az)\r\|_{L^q(S_j(B_\boz);\rn)}\ls2^{-j(\epz-\frac{n}{q})},
\end{align}
which, combined with \eqref{eq5.12} and $\epz>\frac{n}{p}>\frac{n}{q}$, further implies that
\begin{equation*}
\lf\|\nabla L^{-1}_D(\az)\r\|_{L^q(\boz;\rn)}\le\lf\|\nabla L^{-1}_D(\az)\r\|_{L^q(8B_\boz;\rn)}
+\sum_{j=3}^\fz\lf\|\nabla L^{-1}_D(\az)\r\|_{L^q(S_j(B_\boz);\rn)}\ls1.
\end{equation*}
Thus, \eqref{eq5.11} holds true. This finishes the proof of (ii).

Finally, we show (iii). Let $p\in(\frac{n}{n+2},\frac{n}{n+1}]$ and $q\in(\frac{n}{n+1},1]$
satisfy $\frac{1}{p}-\frac{1}{q}=\frac{1}{n}$, and $f\in H^p_{L_D}(\boz)\cap L^2(\boz)$.
Take $1<p_2<2<q_2<\min\{p_0,2+\epz_0\}$ such that
$\frac{1}{q_2}-\frac{1}{p_2}=\frac{1}{q}-\frac{1}{p}$, where $p_0$
and $\epz_0$ are, respectively, as in Lemmas \ref{l3.6} and \ref{l5.2}.
Let $\epz\in(\frac{n}{p},\fz)$ and $M\in\nn\cap(\max\{\frac{n}{2p},\frac{1+\epz}{2}\},\fz)$.
Then there exist $\{\lz_j\}_{j=1}^\fz\subset\cc$ and a sequence $\{\az_j\}_{j=1}^\fz$
of $(p,\,q_2,\,M,\,\epz)_{L_D}$-molecules associated, respectively,
with the balls $\{B_{\boz,\,j}\}_{j=1}^\fz$ such that \eqref{eq5.9} and \eqref{eq5.10} hold true.
To finish the proof of (iii), it suffices to show that, for any $(p,\,q_2,\,M,\,\epz)_{L_D}$-molecule $\az$,
the zero extension of $\nabla L^{-1}_D(\az)$ from $\boz$ to $\rn$,
denoted by $\wz{\nabla L^{-1}_D(\az)}$, is a harmless constant multiple of
a $(q,\,q_2,\,0,\,\epz)$-molecule associated with the ball $B$.
Indeed, if this claim holds true, then, by this, $p<q$, \eqref{eq5.9}, and \eqref{eq5.10}, we find that the zero extension
of $\nabla L_D^{-1}(f)$ from $\boz$ to $\rn$, denoted by $\wz{\nabla L_D^{-1}(f)}$, belongs to $H^q(\rn;\rn)$, and
\begin{align*}
\lf\|\wz{\nabla L_D^{-1}(f)}\r\|_{H^q(\rn;\rn)}\ls\lf(\sum_{j=0}^\fz|\lz_j|^q\r)^{1/q}\ls
\lf(\sum_{j=0}^\fz|\lz_j|^p\r)^{1/p}\sim\|f\|_{H^p_{L_D}(\boz)},
\end{align*}
which further implies that $\nabla u=\nabla L_D^{-1}(f)\in H^q_z(\rn;\rn)$ and
\begin{align*}
\|\nabla u\|_{H^q_z(\boz;\rn)}=\lf\|\nabla L_D^{-1}(f)\r\|_{H^q_z(\boz;\rn)}\ls\|f\|_{H^p_{L_D}(\boz)}.
\end{align*}
Thus, (iii) holds true.

Let $\az$ be a $(p,\,q_2,\,M,\,\epz)_{L_D}$-molecule. Now, we prove that $\wz{\nabla L^{-1}_D(\az)}$
is a harmless constant multiple of a $(q,\,q_2,\,0,\,\epz)$-molecule associated with the ball $B$.
Take a ball $B_0\subset\rn$ such that $\boz\subset B_0$, and
$\fai\in C^\fz_{\rm c}(\rn)$ satisfies that $\fai\equiv1$ on $B_0$.
From $\az\in L^{p_2}(\boz)$ and (i), we deduce that $L^{-1}_D(\az)\in W^{1,q_2}_0(\boz)$,
which, together with that $\fai\equiv1$ on $B_0$, further implies that
\begin{align}\label{eq5.21}
\int_{\rn}\wz{\nabla L^{-1}_D(\az)}(x)\,dx=\int_{\rn}\wz{\nabla L^{-1}_D(\az)}(x)\fai(x)\,dx
=-\int_\rn L^{-1}_D(\az)(x)\nabla\fai(x)\,dx=0.
\end{align}
Furthermore, by (i), we find that $\nabla L^{-1}_D$ is bounded from $L^{p_2}(\boz)$
to $L^{q_2}(\boz;\rn)$, which, combined with $\frac{1}{q_2}-\frac{1}{p_2}=\frac{1}{q}-\frac{1}{p}$,
implies that
\begin{align}\label{eq5.22}
\lf\|\wz{\nabla L^{-1}(\az)}\r\|_{L^{q_2}(4B;\rn)}&=
\lf\|\nabla L^{-1}(\az)\r\|_{L^{q_2}(4B_\boz;\rn)}\ls\|\az\|_{L^{p_2}(\boz)}\\ \nonumber
&\ls|B_\boz|^{\frac{1}{p_2}-\frac{1}{p}}\sim|B_\boz|^{\frac{1}{q_2}-\frac{1}{q}}
\ls|4B|^{\frac{1}{q_2}-\frac{1}{q}}.
\end{align}
Moreover, similarly to the proof of \eqref{eq5.20}, for any $j\in\nn$ with $j\ge3$, we have
\begin{align}\label{eq5.23}
\lf\|\wz{\nabla L^{-1}(\az)}\r\|_{L^{q_2}(S_j(B);\rn)}&=
\lf\|\nabla L^{-1}(\az)\r\|_{L^{q_2}(S_j(B_\boz);\rn)}\ls2^{-j\epz}
\lf|2^jB\r|^{\frac{1}{q_2}}|B|^{-\frac{1}{q}}.
\end{align}
Thus, from \eqref{eq5.21}, \eqref{eq5.22}, and \eqref{eq5.23}, it follows that
$\wz{\nabla L^{-1}(\az)}$ is a harmless constant multiple of a $(q,\,q_2,\,0,\,\epz)$-molecule.
This finishes the proof of (iii), and hence of Theorem \ref{t1.3}.
\end{proof}

\smallskip

\noindent{\bf Acknowledgements.}\quad
Sibei Yang would like to thank Professors Jun Cao and Jun Geng
for some helpful discussions on the topic of this article.

\bigskip

\noindent Sibei Yang

\medskip

\noindent School of Mathematics and Statistics, Gansu Key Laboratory of Applied Mathematics and
Complex Systems, Lanzhou University, Lanzhou 730000, People's Republic of China

\smallskip

\noindent{\it E-mail:} \texttt{yangsb@lzu.edu.cn} (S. Yang)

\bigskip

\noindent \noindent Dachun Yang (Corresponding author)

\medskip

\noindent Laboratory of Mathematics and Complex Systems (Ministry of Education of China),
School of Mathematical Sciences, Beijing Normal University, Beijing 100875, People's Republic of China

\smallskip

\noindent {\it E-mail}: \texttt{dcyang@bnu.edu.cn} (D. Yang)


\begin{thebibliography}{99}

\bibitem{aq02} P. Auscher and M. Qafsaoui, Observations on $W^{1,p}$ estimates for
divergence elliptic equations with $\mathrm{VMO}$ coefficients,
Boll. Unione Mat. Ital. Sez. B Artic. Ric. Mat. (8) 5 (2002), 487-509.

\vspace{-0.3cm}

\bibitem{ar03} P. Auscher and E. Russ, Hardy spaces and divergence operators on
strongly Lipschitz domains of $R^n$, J. Funct. Anal. 201 (2003), 148-184.

\vspace{-0.3cm}

\bibitem{at01a} P. Auscher and P. Tchamitchian, Gaussian estimates
for second order elliptic divergence operators on Lipschitz and
$C^1$ domains, in: Evolution Equations and Their Applications in
Physical and Life Sciences (Bad Herrenalb, 1998), 15-32, Lecture
Notes in Pure and Applied Mathematics 215, Dekker, New York, 2001.

\vspace{-0.3cm}

\bibitem{at98} P. Auscher and P. Tchamitchian, Square Root Problem for Divergence Operators and
Related Topics, Ast\'erisque 249 (1998), viii+172 pp.

\vspace{-0.3cm}

\bibitem{bk95} S. Buckley and P. Koskela, Sobolev--Poincar\'e implies John,
Math. Res. Lett. 2 (1995), 577-593.

\vspace{-0.3cm}

\bibitem{bckyy13b} T. A. Bui, J. Cao, L. D. Ky, D. Yang and S. Yang,
Musielak--Orlicz--Hardy spaces associated with operators satisfying reinforced
off-diagonal estimates, Anal. Geom. Metr. Spaces 1 (2013), 69-129.

\vspace{-0.3cm}

\bibitem{bd18} T. A. Bui and X. T. Duong, Regularity estimates for Green operators
of Dirichlet and Neumann problems on weighted Hardy spaces, J. Math. Soc. Japan 73 (2021), 597-631.

\vspace{-0.3cm}

\bibitem{bd17} T. A. Bui and X. T. Duong, Global Lorentz estimates for nonlinear parabolic
equations on nonsmooth domains, Calc. Var. Partial Differential Equations 56
(2017), no. 2, Art. 47, 24 pp.

\vspace{-0.3cm}

\bibitem{bdl18a} T. A. Bui, X. T. Duong and F. K. Ly, Maximal function characterizations
for Hardy spaces on spaces of homogeneous type with finite measure and applications,
J. Funct. Anal. 278 (2020), no. 8, 108423, 55 pp.

\vspace{-0.3cm}

\bibitem{bdl18} T. A. Bui, X. T. Duong and F. K. Ly, Maximal function characterizations
for new local Hardy-type spaces on spaces of homogeneous type,
Trans. Amer. Math. Soc. 370 (2018), 7229-7292.

\vspace{-0.3cm}

\bibitem{bw04} S.-S. Byun and L. Wang, Elliptic equations with BMO coefficients
in Reifenberg domains, Comm. Pure Appl. Math. 57 (2004), 1283-1310.

\vspace{-0.3cm}

\bibitem{ccyy13} J. Cao, D.-C. Chang, D. Yang and S. Yang, Weighted local Orlicz--Hardy
spaces on domains and their applications in inhomogeneous Dirichlet and
Neumann problems, Trans. Amer. Math. Soc. 365 (2013), 4729-4809.

\vspace{-0.3cm}

\bibitem{cds99} D.-C. Chang, G. Dafni and E. M. Stein, Hardy spaces, BMO, and boundary
value problems for the Laplacian on a smooth domain in $\rn$,
Trans. Amer. Math. Soc. 351 (1999), 1605-1661.

\vspace{-0.3cm}

\bibitem{cfyy16} D.-C. Chang, Z. Fu, D. Yang and S. Yang, Real-variable characterizations of
Musielak--Orlicz--Hardy spaces associated with Schr\"{o}dinger
operators on domains, Math. Methods Appl. Sci. 39 (2016), 533-569.

\vspace{-0.3cm}

\bibitem{cks93} D.-C. Chang, S. G. Krantz and E. M. Stein, $H^p$ theory on a smooth
domain in $R^N$ and elliptic boundary value problems, J. Funct. Anal. 114 (1993), 286-347.

\vspace{-0.3cm}
\bibitem{cks92} D.-C. Chang, S. G. Krantz and E. M. Stein, Hardy spaces and elliptic
boundary value problems, in: The Madison Symposium on Complex Analysis (Madison, WI, 1991),
119-131, Contemp. Math. 137, Amer. Math. Soc., Providence, RI, 1992.

\vspace{-0.3cm}

\bibitem{cw71} R. R. Coifman and G. Weiss,  Analyse Harmonique Non-Commutative sur Certains Espaces Homog\'enes,
(French) \'Etude de certaines int\'egrales singuli\'eres, Lecture Notes in Mathematics 242,
Springer-Verlag, Berlin--New York, 1971.

\vspace{-0.3cm}

\bibitem{d79} B. E. J. Dahlberg, $L^q$-estimates for Green potentials in Lipschitz
domains, Math. Scand. 44 (1979), 149-170.

\vspace{-0.3cm}

\bibitem{d89} E. B. Davies, Heat Kernels and Spectral Theory, Cambridge
Tracts in Mathematics 92, Cambridge University Press, Cambridge, 1989.

\vspace{-0.3cm}

\bibitem{d96} G. Di Fazio, $L^p$ estimates for divergence form elliptic equations with
discontinuous coefficients, Boll. Un. Mat. Ital. A (7) 10 (1996), 409-420.

\vspace{-0.3cm}

\bibitem{d20} H. Dong, Recent progress in the $L^p$ theory for elliptic and parabolic equations with
discontinuous coefficients, Anal. Theory Appl. 36 (2020), 161-199.

\vspace{-0.3cm}

\bibitem{dk10} H. Dong and D. Kim, Elliptic equations in divergence form with partially
BMO coefficients, Arch. Ration. Mech. Anal. 196 (2010), 25-70.

\vspace{-0.3cm}

\bibitem{dk12} H. Dong and D. Kim, The conormal derivative problem for higher order elliptic systems with irregular
coefficients, in: Recent Advances in Harmonic Analysis and Partial Differential Equations, 69-97,
Contemp. Math. 581, Amer. Math. Soc., Providence, RI, 2012.

\vspace{-0.3cm}

\bibitem{dk18} H. Dong and D. Kim, On $L^p$-estimates for elliptic and parabolic equations
with $A_p$ weights, Trans. Amer. Math. Soc. 370 (2018), 5081-5130.

\vspace{-0.3cm}

\bibitem{dl21} H. Dong and Z. Li, The conormal and Robin boundary value problems in nonsmooth domains satisfying
a measure condition, J. Funct. Anal. 281 (2021), no. 9, Paper No. 109167, 32 pp.

\vspace{-0.3cm}

\bibitem{dhmmy13} X. T. Duong, S. Hofmann, D. Mitrea, M. Mitrea and L. Yan,
Hardy spaces and regularity for the inhomogeneous Dirichlet and Neumann problems,
Rev. Mat. Iberoam. 29 (2013), 183-236.

\vspace{-0.3cm}

\bibitem{do03} X. T. Duong and E. M. Ouhabaz, Gaussian upper bounds for heat kernels of
a class of nondivergence operators, in: International Conference on Harmonic Analysis
and Related Topics (Sydney, 2002), 35-45, Proc. Centre Math. Appl. Austral. Nat. Univ., 41,
Austral. Nat. Univ., Canberra, 2003.

\vspace{-0.3cm}

\bibitem{dy05} X. T. Duong and L. Yan, Duality of Hardy and BMO spaces associated
with operators with heat kernel bounds, J. Amer. Math. Soc. 18 (2005), 943-973.

\vspace{-0.3cm}

\bibitem{fs72} C. Fefferman and E. M. Stein, $H^p$ spaces of several
variables, Acta Math. 129 (1972), 137-193.

\vspace{-0.3cm}

\bibitem{g12} J. Geng, $W^{1,p}$ estimates for elliptic problems with Neumann boundary
conditions in Lipschitz domains, Adv. Math. 229 (2012), 2427-2448.

\vspace{-0.3cm}

\bibitem{gt01}D. Gilbarg and N. S. Trudinger, Elliptic Partial Differential Equations of Second Order,
Reprint of the 1998 edition, Springer-Verlag, Berlin, 2001.

\vspace{-0.3cm}

\bibitem{gw82} M. Gr\"{u}ter and K.-O. Widman, The Green function for uniformly elliptic equations,
Manuscripta Math. 37 (1982), 303-342.

\vspace{-0.3cm}

\bibitem{gs11} P. Gyrya and L. Saloff-Coste, Neumann and Dirichlet Heat Kernels in Inner Uniform Domains,
Ast\'erisque 336 (2011), viii+144 pp.

\vspace{-0.3cm}

\bibitem{hkt08} P. Haj\l asz, P. Koskela and H. Tuominen, Sobolev embeddings, extensions and
measure density condition, J. Funct. Anal. 254 (2008), 1217-1234.

\vspace{-0.3cm}

\bibitem{hk07} S. Hofmann and S. Kim, The Green function estimates for strongly elliptic systems
of second order, Manuscripta Math. 124 (2007), 139-172.

\vspace{-0.3cm}

\bibitem{hlmmy} S. Hofmann, G. Lu, D. Mitrea, M. Mitrea and L. Yan,
Hardy spaces associated to non-negative self-adjoint operators satisfying
Davies--Gaffney estimates, Mem. Amer. Math. Soc. 214 (2011), no. 1007, vi+78 pp.

\vspace{-0.3cm}

\bibitem{hmm11} S. Hofmann, S. Mayboroda and A. McIntosh, Second order elliptic operators
with complex bounded measurable coefficients in $L^p$, Sobolev and Hardy spaces,
Ann. Sci. \'Ec. Norm. Super. (4) 44 (2011), 723-800.

\vspace{-0.3cm}

\bibitem{jk82} D. Jerison and C. E. Kenig, Boundary behavior of harmonic
functions in nontangentially accessible domains, Adv. Math. 46 (1982), 80-147.

\vspace{-0.3cm}

\bibitem{jk95} D. Jerison and C. E. Kenig, The inhomogeneous Dirichlet problem
in Lipschitz domains, J. Funct. Anal. 130 (1995), 161-219.

\vspace{-0.3cm}

\bibitem{jlw10} H. Jia, D. Li and L. Wang, Global regularity for divergence form elliptic equations on
quasiconvex domains, J. Differential Equations 249 (2010), 3132-3147.

\vspace{-0.3cm}

\bibitem{j81} P. W. Jones, Quasiconformal mappings and extendability of functions in
Sobolev spaces, Acta Math. 147 (1981), 71-88.

\vspace{-0.3cm}

\bibitem{kmm07} N. Kalton, S. Mayboroda and M. Mitrea, Interpolation of Hardy--Sobolev--Besov--Triebel--Lizorkin
spaces and applications to problems in partial differential equations, in: Interpolation Theory and
Applications, 121-177, Contemp. Math. 445, Amer. Math. Soc., Providence, RI, 2007.

\vspace{-0.3cm}

\bibitem{k94} C. E. Kenig, Harmonic Analysis Techniques for Second Order Elliptic
Boundary Value Problems, CBMS Regional Conference Series in Mathematics 83,
American Mathematical Society, Providence, RI, 1994.

\vspace{-0.3cm}

\bibitem{kt97} C. E. Kenig and T. Toro, Harmonic measure on locally flat domains,
Duke Math. J. 87 (1997), 509-551.

\vspace{-0.3cm}

\bibitem{l78} R. H. Latter, A characterization of $H^{p}(\rn)$ in terms of atoms,
Studia Math. 62 (1978), 93-101.

\vspace{-0.3cm}

\bibitem{lw17} J. Li and B. D. Wick, Characterizations of $H^1_{\Delta_N}(\rn)$ and
$\mathrm{BMO}_{\Delta_N}(\rn)$ via weak factorizations and commutators,
J. Funct. Anal. 272 (2017), 5384-5416.

\vspace{-0.3cm}

\bibitem{lp19} L. Li and J. Pipher, Boundary behavior of solutions of elliptic operators in divergence
form with a BMO anti-symmetric part, Comm. Partial Differential Equations 44 (2019), 156-204.

\vspace{-0.3cm}

\bibitem{mmmy10} D. Mitrea, I. Mitrea, M. Mitrea and L. Yan, Coercive energy estimates for
differential forms in semi-convex domains, Commun. Pure Appl. Anal. 9 (2010), 987-1010.

\vspace{-0.3cm}

\bibitem{mmy10} D. Mitrea, M. Mitrea and L. Yan, Boundary value problems for the Laplacian
in convex and semiconvex domains, J. Funct. Anal. 258 (2010), 2507-2585.

\vspace{-0.3cm}

\bibitem{m90} A. Miyachi, $H^p$ spaces over open subsets of $\rn$, Studia Math. 95 (1990), 205-228.

\vspace{-0.3cm}

\bibitem{o05} E. M. Ouhabaz, Analysis of Heat Equations on Domains,
London Mathematical Society Monographs Series 31, Princeton University Press,
Princeton, NJ, 2005.

\vspace{-0.3cm}

\bibitem{p83} A. Pazy, Semigroups of Linear Operators and Applications to Partial Differential
Equations, Applied Mathematical Sciences 44, Springer-Verlag, New York, 1983.

\vspace{-0.3cm}

\bibitem{r60} E. R. Reifenberg, Solution of the Plateau Problem for $m$-dimensional surfaces of
varying topological type, Acta Math. 104 (1960), 1-92.

 \vspace{-0.3cm}

\bibitem{r07} E. Russ, The atomic decomposition for tent spaces on
spaces of homogeneous type, in: CMA/AMSI Research Symposium ``Asymptotic
Geometric Analysis, Harmonic Analysis, and Related Topics'',
125-135, Proc. Centre Math. Appl. 42, Austral. Nat. Univ.,
Canberra, 2007.

\vspace{-0.3cm}

\bibitem{s10} L. Saloff-Coste, The heat kernel and its estimates, in: Probabilistic approach to geometry,
405-436, Adv. Stud. Pure Math. 57, Math. Soc. Japan, Tokyo, 2010.

\vspace{-0.3cm}

\bibitem{s75} D. Sarason, Functions of vanishing mean oscillation, Trans. Amer. Math. Soc.
207 (1975), 391-405.

\vspace{-0.3cm}

\bibitem{sh18} Z. Shen, Periodic Homogenization of Elliptic Systems,
Operator Theory: Advances and Applications 269, Advances in Partial
Differential Equations (Basel), Birkh\"auser/Springer, Cham, 2018.

\vspace{-0.3cm}

\bibitem{sh07} Z. Shen, The $L^p$ boundary value problems on Lipschitz domains,
Adv. Math. 216 (2007), 212-254.

\vspace{-0.3cm}

\bibitem{sh05a} Z. Shen, Bounds of Riesz transforms on $L^p$ spaces for second order
elliptic operators, Ann. Inst. Fourier (Grenoble) 55 (2005), 173-197.

\vspace{-0.3cm}

\bibitem{sy16} L. Song and L. Yan, A maximal function characterization for Hardy
spaces associated to nonnegative self-adjoint operators satisfying Gaussian estimates,
Adv. Math. 287 (2016), 463-484.

\vspace{-0.3cm}

\bibitem{st93} E. M. Stein, Harmonic Analysis: Real-Variable Methods,
Orthogonality, and Oscillatory integrals, Princeton Univ. Press,
Princeton, NJ, 1993.

\vspace{-0.3cm}

\bibitem{sw60} E. M. Stein and G. Weiss, On the theory of
harmonic functions of several variables. I. The theory of $H^p$-spaces,
Acta Math. 103 (1960), 25-62.

\vspace{-0.3cm}

\bibitem{tw80} M. H. Taibleson and G. Weiss, The molecular
characterization of certain Hardy spaces, in: Representation Theorems
for Hardy Spaces, pp. 67-149, Ast\'erisque 77 (1980).

\vspace{-0.3cm}

\bibitem{t97} T. Toro, Doubling and flatness: geometry of measures,
Notices Amer. Math. Soc. 44 (1997), 1087-1094.

\vspace{-0.3cm}

\bibitem{yy13} D. Yang and S. Yang, Real-variable characterizations of Orlicz--Hardy
spaces on strongly Lipschitz domains of $\rn$, Rev. Mat. Iberoam. 29 (2013), 237-292.

\vspace{-0.3cm}

\bibitem{yy12} D. Yang and S. Yang, Orlicz--Hardy spaces associated with divergence
operators on unbounded strongly Lipschitz domains of $\rn$, Indiana Univ. Math. J. 61 (2012), 81-129.

\vspace{-0.3cm}

\bibitem{ycyy20} S. Yang, D.-C. Chang, D. Yang and W. Yuan, Weighted gradient estimates
for elliptic problems with Neumann boundary conditions in Lipschitz and (semi-)convex domains,
J. Differential Equations 268 (2020), 2510-2550.

\vspace{-0.3cm}

\bibitem{yy18} S. Yang and D. Yang, Atomic and maximal function characterizations of
Musielak--Orlicz--Hardy spaces associated to non-negative self-adjoint operators
on spaces of homogeneous type, Collect. Math. 70 (2019), 197-246.

\vspace{-0.3cm}

\bibitem{yyy20} S. Yang, D. Yang and W. Yuan, Global gradient estimates for Dirichlet
problems of elliptic operators with a BMO anti-symmetric part, Submitted or arXiv: 2201.00909v1.

\end{thebibliography}
\end{document}